\theoremstyle{theorem}
\newtheorem{theorem}{Theorem}[section]
\newtheorem{proposition}[theorem]{Proposition}
\newtheorem{lemma}[theorem]{Lemma}
\newtheorem{question}[theorem]{Question}
\newtheorem{corollary}[theorem]{Corollary}
\newtheorem{conjecture}[theorem]{Conjecture}
\theoremstyle{definition}
\newtheorem{remark}[theorem]{Remark}
\newcommand{\Z}{\mathbb{Z}}
\newcommand{\Q}{\mathbb{Q}}
\newcommand{\A}{\alpha}
\newcommand{\n}{\beta}
\newcommand{\g}{\gamma}
\newcommand{\X}{\times}
\newcommand{\pd}{\partial}
\newcommand{\emp}{\emptyset}
\newcommand{\T}{\mathcal T}
\newcommand{\CP}{\mathbb{CP}}
\def\@seccntformat#1{%
  \protect\textup{\protect\@secnumfont
    \ifnum\pdfstrcmp{subsection}{#1}=0 \bfseries\fi
    \csname the#1\endcsname
    \protect\@secnumpunct
  }%
}  
\newtheorem*{rep@theorem}{\rep@title}
\newcommand{\newreptheorem}[2]{%
\newenvironment{rep#1}[1]{%
 \def\rep@title{#2 \ref{##1}}%
 \begin{rep@theorem}}%
 {\end{rep@theorem}}}
\begin{document}

\rhead{\thepage}
\lhead{\author}
\thispagestyle{empty}


\raggedbottom
\pagenumbering{arabic}
\setcounter{section}{0}


\title{Manifolds with weakly reducible genus-three trisections are standard}

\author{Rom\'an Aranda}
\address{Department of Mathematics, University of Nebraska-Lincoln}
\email{jarandacuevas2@unl.edu}

\author{Alexander Zupan}
\address{Department of Mathematics, University of Nebraska-Lincoln}
\email{zupan@unl.edu}

\begin{abstract}
Heegaard splittings stratify 3-manifolds by complexity; only $S^3$ admits a genus-zero splitting, and only $S^3$, $S^1 \times S^2$, and lens spaces $L(p,q)$ admit genus-one splittings.  In dimension four, the second author and Jeffrey Meier proved that only a handful of simply-connected 4-manifolds have trisection genus two or less~\cite{MZ2}, while Meier conjectured that if $X$ admits a genus-three trisection, then $X$ is diffeomorphic to a spun lens space $S_p$ or its sibling $S_p'$, $S^4$, or a connected sum of copies of $\pm \mathbb{CP}^2$, $S^1 \times S^3$, and $S^2 \times S^2$~\cite{meier}.  We prove Meier's conjecture in the case that $X$ admits a weakly reducible genus-three trisection, where weak reducibility is a new idea adapted from Heegaard theory and is defined in terms of disjoint curves bounding compressing disks in various handlebodies.  The tools and techniques used to prove the main theorem borrow heavily from 3-manifold topology.  Of independent interest, we give a trisection-diagrammatic description of 4-manifolds obtained by surgery on loops and spheres in other 4-manifolds.
\end{abstract}

\maketitle

\section{Introduction}

The classification of closed, orientable manifolds in dimension $n$ is a foundational problem in geometric topology.  For $n=3$, the classification problem becomes much more manageable when we stratify 3-manifolds by Heegaard genus:  The only 3-manifold with a genus-zero Heegaard splitting is $S^3$, and the only 3-manifolds with a genus-one Heegaard splitting are $S^3$, $S^1 \X S^2$, and the family of lens spaces $L(p,q)$.  Turning to dimension four, we may stratify manifolds by their trisection genus, a higher-dimensional analogue of Heegaard genus.  In their seminal work, Gay and Kirby showed the only smooth 4-manifold admitting a genus-zero trisection is $S^4$, while the only manifolds admitting genus-one trisections are $S^4$, $S^1 \X S^3$, and $\pm \CP^2$~\cite{GK}.  This classification program was pushed further by Meier and the second author, who proved

\begin{theorem}\cite{MZ2}\label{thm:MZ}
If $X$ admits a genus-two trisection $\T$, then either $\T$ is reducible and $X$ can be expressed as a connected sum of genus-one 4-manifolds, or $\T$ is the standard trisection of $S^2 \X S^2$.
\end{theorem}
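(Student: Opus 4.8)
The plan is to argue entirely on the level of trisection diagrams. A genus-two trisection $\T$ of $X$ is recorded by a diagram $(\Sigma,\alpha,\beta,\gamma)$, where $\Sigma$ is a closed orientable surface of genus two and $\alpha=\{\alpha_1,\alpha_2\}$, $\beta$, $\gamma$ are cut systems such that each of the three pairs $(\alpha,\beta)$, $(\beta,\gamma)$, $(\gamma,\alpha)$ is a genus-two Heegaard diagram for some $\#^{k}(S^1\X S^2)$ with $0\le k\le 2$; two diagrams present diffeomorphic trisections if they differ by a diffeomorphism of $\Sigma$ together with handleslides performed within each individual cut system. I would classify such diagrams up to these moves. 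An Euler-characteristic count limits the triples of genera that can appear among the three Heegaard diagrams, leaving only finitely many cases to treat.

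First I would normalize one pair, say $(\alpha,\beta)$. Genus-two Heegaard splittings of $S^3$ are unique by Waldhausen, and, via Haken's lemma, genus-two splittings of $S^1\X S^2$ and of $\#^2(S^1\X S^2)$ are standard as well; hence after a diffeomorphism of $\Sigma$ and handleslides among the $\alpha$- and among the $\beta$-curves I may take $(\alpha,\beta)$ to be one of three models: $\alpha_i\cdot\beta_j=\delta_{ij}$ (the $S^3$ model), or $\alpha_1$ isotopic to $\beta_1$ with $\alpha_2\cdot\beta_2=1$ (the $S^1\X S^2$ model), or $\alpha_i$ isotopic to $\beta_i$ for both $i$ (the $\#^2(S^1\X S^2)$ model). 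The last model forces $\alpha$ and $\beta$ to cut out the same handlebody, which already exhibits a reduction of $\T$, so the substance is in the first two models.

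The combinatorial heart, and the step I expect to be the main obstacle, is to pin down where $\gamma$ can sit once $(\alpha,\beta)$ is normalized, using that $(\beta,\gamma)$ and $(\gamma,\alpha)$ are themselves standard diagrams. The natural tool is the Haken--Waldhausen theory of \emph{waves}: if a genus-two diagram is standard but some pair of its cut systems is not in minimal position, there is a wave along which a handleslide strictly lowers the geometric intersection number of that pair. The trouble is that a wave witnessing standardness of $(\gamma,\alpha)$ need not be compatible with $(\beta,\gamma)$, so one must carry along both $|\gamma\cap\alpha|$ and $|\gamma\cap\beta|$ under handleslides and show that $\gamma$ can still be driven into a position where it meets $\alpha$ and $\beta$ only in their model intersection points. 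A genus-two surface has enough room that $\gamma$ can wind around the normalized $\alpha$ and $\beta$ in genuinely complicated ways while each pair remains abstractly standard, and excluding this behavior requires a careful case analysis indexed by the three model types of each pair and by which of the curves in play are separating.

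Once all three cut systems are simultaneously in normal form, only finitely many configurations remain. For each I would either (a) produce an essential separating curve $c\subset\Sigma$ that, after handleslides within each cut system, is disjoint from $\alpha$, $\beta$, and $\gamma$ with one curve of each system on either side; such a $c$ bounds a disk in each of the $\alpha$-, $\beta$-, and $\gamma$-handlebodies, hence a reducing sphere, and cutting $\T$ along it expresses it as a connected sum of two genus-one trisections, so the Gay--Kirby classification identifies $X$ as a connected sum of genus-one $4$-manifolds; or (b) recognize the configuration as the standard genus-two diagram of $S^2\X S^2$, for which one verifies directly that no reducing curve exists. The most delicate point of (b) is separating the $S^2\X S^2$ diagram from look-alikes such as a diagram of $\CP^2\#\overline{\CP^2}$, which has the same homology; there the argument must certify the presence or absence of a reducing sphere rather than fall back on an algebraic invariant.
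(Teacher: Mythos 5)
This theorem is not proved in the present paper; it is imported from \cite{MZ2}, whose proof the authors describe as relying on intricate combinatorics together with the Wave Theorem of \cite{HOT}, and whose technical core is restated here as Proposition~\ref{prop:trip}. Your road map is the same as that of \cite{MZ2}: work diagrammatically, normalize the pairs of cut systems using the uniqueness of genus-two Heegaard splittings of $\#^k(S^1\times S^2)$, and use waves to control the third cut system. So the strategy is not the issue.

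The issue is that the combinatorial heart of the theorem is exactly the step you flag as ``the main obstacle'' and then do not carry out. The claim that $\gamma$ ``can still be driven into a position where it meets $\alpha$ and $\beta$ only in their model intersection points,'' and the subsequent assertion that ``only finitely many configurations remain,'' together constitute essentially all of the content of the theorem: in \cite{MZ2} this occupies Sections 3--6 and runs over twenty pages, precisely because a wave for the pair $(\gamma,\alpha)$ can increase $|\gamma\cap\beta|$ and vice versa, so one must track both intersection patterns simultaneously (via intersection matrices, Whitehead graphs, slopes, and winding numbers) through a genuine case analysis. Your sketch names the difficulty but offers no mechanism for resolving it. Two smaller points: the wave statement you invoke is imprecise --- the Wave Theorem of \cite{HOT} says that every \emph{non-standard} genus-two diagram of $S^3$ contains a wave, and the analogous property for the other manifolds $\#^k(S^1\times S^2)$ appearing as $Y_i$ requires separate results in the spirit of \cite{NO}; it is not a general minimal-position statement. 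And in your case (b), distinguishing the $S^2\times S^2$ diagram from a $\CP^2\#\overline{\CP}^2$ diagram is again a diagrammatic certification that needs an argument (in \cite{MZ2} it falls out of the final normal form), not something that can be waved at. As written, the proposal is a correct outline of the known proof with its hardest step left as an assertion.
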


While the genus-one classification is a straightforward exercise, the genus-two classification relies on intricate combinatorics and the Wave Theorem of~\cite{HOT}, a deep result from the theory of Heegaard splittings.  Regarding genus-three trisections, Meier showed that Pao's manifolds $S_p$ and $S_p'$ (obtained by surgery on a loop in $S^1 \X S^3$) admit genus-three trisections.  He also conjectured

\begin{conjecture}\cite{meier}\label{conj:meier}
If $X$ admits a genus-three trisection, then $X$ is diffeomorphic to a spun lens space $S_p$ or its sibling $S_p'$, $S^4$, or a connected sum of copies of $\pm \CP^2$, $S^1 \X S^3$, and $S^2 \X S^2$.
\end{conjecture}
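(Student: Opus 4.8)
The plan is to prove the conjecture under the extra hypothesis that the genus-three trisection $\T$ of $X$ is \emph{weakly reducible}, adapting the weak-reduction technology of Casson--Gordon from Heegaard theory; I expect the fully general statement to additionally require a treatment of strongly irreducible genus-three trisections, which lies beyond these techniques. First I would fix a trisection diagram $(\Sigma; \alpha, \beta, \gamma)$, with $\Sigma$ of genus three, and the trisection parameters $(3; k_1, k_2, k_3)$, where the four-dimensional pieces are $X_i \cong \natural^{k_i}(S^1 \X B^3)$. I would then pin down the right notion of weak reducibility: that $\Sigma$ carries a system of pairwise-disjoint essential simple closed curves which simultaneously bound compressing disks in (at least) two of the handlebodies determined by $\alpha$, $\beta$, and $\gamma$. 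The first structural goal is a dichotomy: either $\T$ is \emph{reducible}, meaning there is a $3$-sphere meeting each $X_i$ in a single disk so that $X$ decomposes as a nontrivial connected sum of trisected $4$-manifolds, or the weak-reducing system cuts $\Sigma$ into lower-genus subsurfaces on which the induced sub-diagrams are (pieces of) standard Heegaard diagrams of $S^3$ and of $\#^k(S^1 \X S^2)$.

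In the reducible case I would induct on genus: a reducing sphere expresses $X \cong X' \# X''$ with trisection genera $g', g'' \geq 1$ and $g' + g'' = 3$, so $\{g',g''\} = \{1,2\}$; the genus-one summand is $S^4$, $S^1 \X S^3$, or $\pm \CP^2$ by Gay--Kirby~\cite{GK}, and the genus-two summand is handled by Theorem~\ref{thm:MZ} (either it splits further into genus-one pieces or it is the standard $S^2 \X S^2$). Thus $X$ lands in the list of the conjecture. The irreducible case is where the work is, and it must produce the spun lens spaces $S_p$ and $S_p'$, which are not connected sums.

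For the irreducible case the plan has three stages. First, use the Wave Theorem of~\cite{HOT} together with the classification of low-genus Heegaard splittings of $S^3$ and of $\#^k(S^1 \X S^2)$ to normalize, up to handle slides, the pairwise Heegaard diagrams $(\alpha,\beta)$, $(\beta,\gamma)$, and $(\gamma,\alpha)$. Second, simultaneously normalize the weak-reducing curves and all three cut systems, driving two of the three handlebodies into standard position while keeping the third controlled relative to them, and tracking how the parameters $(3; k_1, k_2, k_3)$ constrain the possibilities. Third, recognize the resulting normal form as a trisection obtained by surgery on a loop (and possibly on spheres) inside $S^1 \X S^3$ or inside a connected sum of genus-one pieces; this is exactly the construction yielding Pao's manifolds, and to make it precise I would first develop a diagrammatic calculus translating surgery on a loop or a sphere in a $4$-manifold into a move on trisection diagrams, so that $X$ can be read directly off the normalized diagram.

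The main obstacle, I expect, is the second stage. In Heegaard theory weak reduction must reconcile only two handlebodies meeting along $\Sigma$, whereas a trisection has three, and a weak-reducing system adapted to one pair of cut systems need not be compatible with the third. Controlling all three cut systems at once — guaranteeing that the simplification terminates, and that no essential intersections with the $\gamma$-curves survive once $(\alpha,\beta)$ has been standardized — is where the argument will require the most delicate combinatorics, and is presumably where the genus-three hypothesis (rather than arbitrary genus) becomes essential. A secondary difficulty is purely organizational: the many cases arising from $(3; k_1, k_2, k_3)$ and from the combinatorial types of weak-reducing systems each have to be funneled into the same short list of standard $4$-manifolds.
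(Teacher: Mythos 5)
Your framing is right: the paper does not prove the conjecture in general, only under the weak reducibility hypothesis you impose (plus the dependent-triple extension), and your treatment of the reducible case (splitting into a genus-one and a genus-two summand, then quoting Gay--Kirby and Theorem~\ref{thm:MZ}) and of the endgame (a diagrammatic calculus for surgery on loops and spheres, which is exactly the five-chain/decomposed-curve machinery of Section~\ref{sec:fivechain}) matches the paper. The genuine gap is in your second stage, and it is not merely organizational. You propose to ``use the Wave Theorem of~\cite{HOT} \ldots to normalize, up to handle slides, the pairwise Heegaard diagrams'' on the genus-three surface. The Wave Theorem and its extension by Negami--Okita are genus-two statements; there is no wave property for genus-three diagrams of $S^3$ or $\#^k(S^1\times S^2)$, and genus-three standard diagrams are not unique (Figure~\ref{fig:standard}), so this normalization step fails as stated. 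The paper's replacement for it is Proposition~\ref{prop:dp}: treat the two genus-three Heegaard splittings $H_{\A}\cup H_{\n}$ and $H_{\A}\cup H_{\g}$ separately, untelescope each along the weak-reducing pair, and use thin-position arguments (Proposition~\ref{prop:thin}) together with the uniqueness of compressing disks in the resulting genus-two compression-bodies (Lemmas~\ref{lem:comp1} and~\ref{lem:comp2}) to conclude that either $\T$ is reducible or $\A_1$ admits dual compressing curves $\n_1,\g_1$ disjoint from $\n_3=\g_3$. That 3-manifold-topology input is the missing idea.

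The second thing your plan does not anticipate is that after this step one should \emph{compress along $\A_1$} and work with a genus-two Heegaard \emph{triple} that is not a trisection diagram: one of its three pairings is a diagram for $Y\#(S^1\times S^2)$ with $Y$ a lens space, $S^3$, or $S^1\times S^2$. Only at genus two does wave-theorem technology become available, and even then the paper needs new classification results for such triples (Proposition~\ref{prop:lens}, Theorem~\ref{thm:triple}) whose proofs rest on Dehn-surgery characterization theorems (Scharlemann's reducible-surgery theorem, Gabai's Property R) rather than on diagram combinatorics alone; Property R is also what produces the fourth and fifth curves of the five-chain in the $Y=S^1\times S^2$ case. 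So while your architecture (dichotomy, reduction, loop-surgery recognition) is the paper's, the mechanism you name for the central reduction would not work, and the actual mechanism --- untelescoping at genus three, then descent to a non-trisection genus-two Heegaard triple analyzed via surgery characterization theorems --- needs to be supplied.
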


In the present paper, we prove that any 4-manifold $X$ with a weakly reducible genus-three trisection satisfies Meier's conjecture, where relevant definitions are included below.

\begin{theorem}\label{thm:main}
Suppose $X$ admits a weakly reducible genus-three trisection $\T$.  Then either $\T$ is reducible, or $\T$ contains a five-chain.  In particular, $X$ is diffeomorphic to a spun lens space $S_p$ or its sibling $S_p'$, $S^4$, or a connected sum of copies of $\pm \CP^2$, $S^1 \X S^3$, and $S^2 \X S^2$.
\end{theorem}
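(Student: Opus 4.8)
The plan is to mimic the proof of Theorem \ref{thm:MZ} in spirit, but with the extra hypothesis of weak reducibility doing the work that was previously done by low genus. A weakly reducible genus-three trisection comes with a collection of disjoint curves, each bounding a compressing disk in one of the three handlebodies $H_\alpha, H_\beta, H_\gamma$ bounding the central surface $\Sigma$, and arranged so that curves from at least two of the three systems are simultaneously disjoint. The first step is to set up the combinatorial bookkeeping: on the genus-three surface $\Sigma$, a maximal disjoint multicurve has three components, so we should analyze how a weakly reducing collection can be distributed among the $\alpha$-, $\beta$-, and $\gamma$-handlebodies. I would stratify into cases according to how many of the three curves lie in each system and how the complementary subsurfaces interact, using the fact that a curve bounding in two different handlebodies gives a reducing sphere (potentially producing a connected-sum decomposition and hence reducibility of $\T$), and a curve bounding a disk in one handlebody and appearing as a nonseparating curve disjoint from compressing systems of the others is the seed of a chain.

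The second step is to extract either a reducing sphere or a chain from each case. When the weakly reducing collection contains a curve $c$ bounding disks in two of the three handlebodies, $c$ bounds a sphere $S$ intersecting the trisection nicely; I would verify that $S$ gives a genuine connected-sum splitting of the trisection (this is the four-dimensional analogue of Haken's lemma / the reducibility arguments in \cite{MZ2}), so $\T$ is reducible and induction on genus together with the genus-one and genus-two classifications (Theorem \ref{thm:MZ} and \cite{GK}) finishes it. When no such curve exists, the weakly reducing curves must instead be compressible in only one handlebody each but still mutually disjoint across systems; here I would argue that destabilization or handle-slide moves (the trisection analogue of the Wave Theorem, applied to the induced Heegaard-type splittings of the sectors) convert the configuration into either a reducible one or one exhibiting an explicit five-chain of curves $c_1, \dots, c_5$ with consecutive curves intersecting once and nonconsecutive ones disjoint, alternating among the handlebodies in the pattern forced by genus three.

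The third step is to identify the manifolds. A reducible $\T$ yields $X$ as a connected sum of genus-one pieces, each of which is $S^4$, $S^1 \X S^3$, or $\pm\CP^2$ by \cite{GK}, except that summands may combine — but repeated application plus Theorem \ref{thm:MZ} for genus-two summands brings in $S^2 \X S^2$; so a reducible genus-three trisection gives exactly a connected sum from Meier's list. A five-chain, on the other hand, must be shown to force $X \cong S_p$ or $S_p'$ (or $S^4$ in degenerate cases): I would write down the trisection diagram determined by the five-chain explicitly, recognize it as (a stabilization of) Meier's genus-three diagram for the spun lens spaces, and compute the resulting 4-manifold via the surgery description promised in the abstract — surgery on a loop in $S^1 \X S^3$ gives precisely $S_p$ and $S_p'$ depending on the framing parity. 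Combining the two cases yields the stated list.

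The main obstacle I anticipate is the second step: the combinatorial case analysis for how a weakly reducing collection sits on a genus-three surface relative to three handlebody systems is delicate, and the trisection-adapted version of the Wave Theorem needed to simplify the non-reducible configurations into a clean five-chain is exactly where the genus-two proof's reliance on \cite{HOT} becomes substantially harder — one must control not just a single pair of handlebodies but the interaction of all three sectors, and rule out configurations that are neither reducible nor chains by showing they can always be normalized. Pinning down that every weakly reducible case reduces to one of the two desired outcomes, with no stray exceptional configurations, is the crux.
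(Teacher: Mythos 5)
Your high-level dichotomy (reducible versus five-chain) and your endgame (identify $X$ via surgery on a loop in a lower-genus standard manifold, as in Proposition~\ref{prop:surgery}) match the paper, but the central mechanism that actually produces one of the two outcomes is missing, and the sketch you give of it contains a substantive error. First, you propose to case-split on whether the weakly reducing collection ``contains a curve bounding disks in two of the three handlebodies'' and to treat that case by extracting a reducing sphere. By the paper's definition, weak reducibility \emph{always} hands you such a curve ($c'$ bounds disks in both $H_{\n}$ and $H_{\g}$), and this does \emph{not} give a reducing sphere for the trisection: it only reduces the Heegaard splitting of $Y_3 = H_{\n}\cup H_{\g}$. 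A reducing curve for $\T$ must bound disks in all three handlebodies, so this branch of your argument collapses, and your ``no such curve exists'' branch is vacuous.

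Second, the step you flag as the crux --- normalizing the non-reducible configurations into a five-chain via ``the trisection analogue of the Wave Theorem'' --- is not how the paper proceeds, and no such analogue is available. The paper instead converts everything into 3-dimensional statements: it untelescopes the genus-three Heegaard splittings $H_{\A}\cup_\Sigma H_{\n}$ and $H_{\A}\cup_\Sigma H_{\g}$ of $\#^k(S^1\X S^2)$ along the weak-reducing pair, uses thin position (Proposition~\ref{prop:thin}) and uniqueness of compressing disks in the resulting genus-two compression-bodies (Lemmas~\ref{lem:comp1} and~\ref{lem:comp2}) to prove Proposition~\ref{prop:dp}: either $\T$ is reducible or $\A_1$ is primitive in both $H_{\n}$ and $H_{\g}$. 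It then compresses $\Sigma$ along $\A_1$ to obtain a genus-two Heegaard triple and runs a nine-case analysis using the extension of the~\cite{MZ2} machinery to triples with an $L(p,q)\#(S^1\X S^2)$ sector (via Negami--Okita's wave property), Scharlemann's theorem on reducible Dehn surgeries, Gabai's Property~R, and the Cho--Koda lemmas; the five-chain is assembled explicitly only in the $Y=S^1\X S^2$ subcase. Without Proposition~\ref{prop:dp} or some substitute for it, your proposal has no way to get from the weak-reducibility hypothesis to either a reducing curve or the intersecting-once curves that seed the five-chain, so the gap sits exactly where you predicted the difficulty would be.
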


A trisection $\T$ is determined up to diffeomorphism by its \emph{spine}, the union of three \mbox{3-dimensional} handlebodies $H_{\A} \cup H_{\n} \cup H_{\g}$.  A trisection $\T$ is said to be \emph{reducible} if there is a curve $c$ bounding disks in all three handlebodies; in this case, we can split $\T$ into a connected sum of smaller-genus trisections.  In a foundational result from Heegaard theory, Casson and Gordon proved that if a 3-manifold $Y$ admits a weakly reducible Heegaard splitting, then either the splitting is reducible or $Y$ contains an essential surface~\cite{CG}.  We adapt weak reducibility to the setting of trisections, providing further evidence that key ideas from Heegaard theory can inform new insights in dimension four via trisections.  We say that $\T$ is \emph{weakly reducible} if there are disjoint non-separating curves $c$ and $c'$ such that $c$ bounds a disk in one of the three handlebodies, and $c'$ bounds a disk in the other two.  See~\cite{ST} for additional discussion of weak reduction in the context of Heegaard splittings.

A \emph{five-chain} is a specific sequence of five curves bounding disks in each of the three handlebodies such that consecutive curves intersect once and other pairs of curves are disjoint.  A more rigorous definition can be found in Section~\ref{sec:fivechain}, in which we show that if $\T$ admits a five-chain, then $X$ can be obtained by surgery on a loop $\ell$ in a manifold $X'$ admitting a lower-genus trisection $\T'$.  The proof of Theorem~\ref{thm:main} relies heavily on ideas from the theory of Heegaard splittings and uses a variety of tools and techniques, including connections between 4-manifolds and Dehn surgery on knots in 3-manifolds, weak reduction and thin position of Heegaard splittings, and many ideas arising in the proof of Theorem~\ref{thm:MZ}.

In the spirit of classifying as many genus-three trisections as possible, we prove a stronger version of Theorem~\ref{thm:main} using the same techniques.  Given a trisection $\T$ for $X$ with spine $H_{\A} \cup H_{\n} \cup H_{\g}$, a \emph{dependent triple} for $\T$ consists of three pairwise disjoint non-separating curves $\A_1$, $\n_1$, and $\g_1$ bounding disks in $H_{\A}$, $H_{\n}$, and $H_{\g}$, respectively, such that the homology classes $[\A_1]$, $[\n_1]$, and $[\g_1]$ are linearly dependent in $H_1(\Sigma)$.

\begin{theorem}\label{thm:main2}
Suppose a genus-three trisection $\T$ of $X$ admits a dependent triple.  Then either $\T$ is reducible, or $\T$ contains a five-chain.  In particular, $X$ is diffeomorphic to a spun lens space $S_p$ or its sibling $S_p'$, $S^4$, or a connected sum of copies of $\pm \CP^2$, $S^1 \X S^3$, and $S^2 \X S^2$.
\end{theorem}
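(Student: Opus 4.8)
The plan is to first reduce Theorem~\ref{thm:main2} to a combinatorial dichotomy for the trisection diagram, and then to deduce the manifold classification from this dichotomy together with Theorem~\ref{thm:MZ}, the genus-one classification, and the surgery description of Section~\ref{sec:fivechain}. First observe that Theorem~\ref{thm:main2} contains Theorem~\ref{thm:main}: if $\T$ is weakly reducible, say $c$ bounds a disk in $H_\A$ and $c'$ bounds disks in $H_\n$ and $H_\g$, then $(c, c', c'')$ is a dependent triple, where $c''$ is a parallel pushoff of $c'$ --- it is non-separating, disjoint from $c$ and from $c'$, bounds a disk in $H_\g$ parallel to the one bounded by $c'$, and has $[c'']=[c']$ in $H_1(\Sigma)$. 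So it suffices to treat dependent triples, and the content of the theorem is the assertion that a genus-three trisection admitting a dependent triple is either reducible or contains a five-chain. Granting this, the classification follows: if $\T$ is reducible then $X$ is a connected sum of $4$-manifolds each admitting a trisection of genus at most two, so by the genus-one classification of Gay--Kirby~\cite{GK} and by Theorem~\ref{thm:MZ} every summand is standard and $X$ lies on the list of Conjecture~\ref{conj:meier}; and if $\T$ contains a five-chain then by Section~\ref{sec:fivechain} the manifold $X$ is obtained by surgery on a loop in a $4$-manifold $X'$ carrying a lower-genus trisection, and a finite analysis of surgeries on loops in the $4$-manifolds of trisection genus at most two --- essentially Pao's analysis of loops in $S^1 \X S^3$ together with the elementary simply-connected cases --- again places $X$ on the list.

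To prove the dichotomy, I would run a case analysis organized by the homology of the triple. Fix a dependent triple $(\A_1, \n_1, \g_1)$ and a relation $a[\A_1] + b[\n_1] + c[\g_1] = 0$ in $H_1(\Sigma; \Z)$ with $\gcd(a,b,c)=1$; since a non-separating simple closed curve represents a primitive class, each of $[\A_1],[\n_1],[\g_1]$ is primitive, which sharply constrains the possible relations (for instance, if one of $a,b,c$ vanishes then the other two curves are homologous). Using isotopies and handle slides inside $H_\A$, $H_\n$, and $H_\g$ --- controlled by the Wave Theorem of~\cite{HOT} --- I would put the associated trisection diagram into a normal form minimizing the intersections among the cut systems extending $\A_1$, $\n_1$, $\g_1$, and then split into cases: (I) two of $\A_1, \n_1, \g_1$ are isotopic in $\Sigma$; (II) no two are isotopic but one of $a, b, c$ vanishes, so two of the curves are homologous and separate $\Sigma$ into two genus-one subsurfaces, one of which contains the third; and (III) all of $a, b, c$ are nonzero.

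In case (I) some curve $d$ bounds compressing disks in two of the three handlebodies; gluing these disks along their common boundary $d$ gives a $2$-sphere meeting $\Sigma$ in $d$ alone, and analyzing this sphere against the remaining curve by means of the trisection-diagrammatic surgery description developed in this paper shows $\T$ is reducible or contains a five-chain. Cases (II) and (III) run along parallel but longer lines: the disjointness of $\A_1,\n_1,\g_1$ makes each of the three genus-three Heegaard splittings $H_i \cup_\Sigma H_j$ of a connected sum $\#^{k}(S^1 \X S^2)$ weakly reducible in the sense of Casson--Gordon~\cite{CG}; applying thin-position arguments in the style of Scharlemann--Thompson~\cite{ST} to these three splittings at once, and using the homological relation to transport the resulting reducing data among them, forces either a curve compressing in all three handlebodies --- reducibility --- or the five curves of a five-chain. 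Connections between trisection diagrams and Dehn surgery on knots in $3$-manifolds enter throughout, both to recognize five-chains and to identify standard connected summands.

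The main obstacle is the genus-three case explosion. In the genus-two analysis behind Theorem~\ref{thm:MZ} one essentially tracks a single curve beyond a cut system; in genus three the three cut systems extending $\A_1$, $\n_1$, $\g_1$ can meet one another in far more patterns, and isolating and then eliminating the patterns that are neither reducible nor carry a five-chain requires repeated re-normalization of diagrams via the Wave Theorem and careful bookkeeping of intersection numbers and homology classes on the genus-three surface. I expect this diagrammatic bookkeeping, rather than any single conceptual step, to be the bulk of the work: the conceptual inputs --- weak reduction, thin position, the surgery description, and Theorem~\ref{thm:MZ} --- are largely available, and the difficulty is marshalling them uniformly across all the cases.
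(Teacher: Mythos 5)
Your skeleton is the right one and matches the paper: the three homological types of a dependent triple (two curves homotopic; two homologous but not homotopic; all three classes pairwise independent, cobounding a pair of pants), the observation that weak reducibility is the first of these, and the use of the five-chain surgery description plus Theorem~\ref{thm:MZ} to finish. But in cases (II) and (III) you stop exactly where the work begins. Case (II) does not need ``thin position on three splittings at once with transported reducing data'': the two homologous, non-homotopic curves are mutually separating, so Lemma~\ref{lem:weak2} applied to the single splitting $H_{\n}\cup_\Sigma H_{\g}$ already forces one of them to compress in the other's handlebody, and you are back in the weakly reducible case. In case (III) the actual mechanism is two applications of Lemma~\ref{lem:weak1} (to $H_{\A}\cup H_{\n}$ and $H_{\A}\cup H_{\g}$) producing dual curves $\n_2$, $\g_2$ meeting $\A_1$ once, followed by explicit handleslides over $\n_1$ and $\g_1$ to clear the pair of pants and arrange $\n_2\cap\g_2^*=\emp$; your proposed simultaneous thin-position argument is too vague to check and is not obviously capable of producing the five intersection conditions a five-chain requires. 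The repeated appeals to the Wave Theorem for normal forms are also misplaced here --- that input lives in the Heegaard-triple analysis feeding Theorem~\ref{thm:main}, not in this argument.

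The concrete gap is in your endgame. You assert that a ``finite analysis of surgeries on loops in the 4-manifolds of trisection genus at most two'' again places $X$ on Meier's list. It does not: the genus-two manifold $X'$ produced by five-chain surgery can be $S^1\X S^3\,\#\pm\CP^2$, and surgery on a homotopically essential loop there yields $S_p\#\pm\CP^2$ or $S_p'\#\pm\CP^2$, which are \emph{not} on the list in the statement. These must be excluded by a separate argument, and the exclusion is not formal: one shows $g(S_p\#\pm\CP^2)=4$ for $p>1$ using the Chu--Tillmann bound $g(X)\geq \chi(X)-2+3\,\mathrm{rk}(\pi_1(X))$ together with its additivity under connected sum for manifolds realizing the bound (Lemma~\ref{lem:sum}), contradicting the hypothesis that $X$ has a genus-three trisection. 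Without this step your classification statement is simply false as written, so this is a missing idea rather than an omitted routine verification.
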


\begin{corollary}\label{cor:sc}
If $X$ is simply-connected and admits a genus-three trisection $\T$ with a dependent triple, then $X$ is diffeomorphic to $S^4$ or a connected sum of copies of $\pm \CP^2$, and $S^2 \X S^2$.
\end{corollary}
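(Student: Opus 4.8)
The plan is to deduce Corollary~\ref{cor:sc} directly from Theorem~\ref{thm:main2} by filtering the resulting list of diffeomorphism types according to which ones are simply-connected. By Theorem~\ref{thm:main2}, a genus-three trisection of $X$ admitting a dependent triple forces $X$ to be diffeomorphic to a spun lens space $S_p$, its sibling $S_p'$, $S^4$, or a connected sum $N$ of copies of $\pm\CP^2$, $S^1 \X S^3$, and $S^2 \X S^2$; so it suffices to determine which of these carry trivial fundamental group.

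First I would dispose of the connected sums. Since $\pi_1$ of a connected sum of closed, connected $n$-manifolds with $n \ge 3$ is the free product of the fundamental groups of the summands, and since $\pi_1(S^1 \X S^3) \cong \Z$ while $\CP^2$, $\overline{\CP^2}$, and $S^2 \X S^2$ are all simply-connected, the manifold $N$ is simply-connected if and only if it has no $S^1 \X S^3$ summands; that is, $N$ is a (possibly empty, i.e.\ $S^4$) connected sum of copies of $\pm\CP^2$ and $S^2 \X S^2$. This is precisely the collection of manifolds appearing in the conclusion of the corollary.

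Next I would rule out the spun lens spaces. Pao's manifold $S_p$ and its sibling $S_p'$ are obtained by surgery on a loop in $S^1 \X S^3$, and a standard van Kampen computation gives $\pi_1(S_p) \cong \pi_1(S_p') \cong \Z/p$; hence $S_p$ and $S_p'$ are simply-connected only for $p = \pm 1$, in which case they recover $S^4$, and for $|p| \ge 2$ they are genuinely non-simply-connected and therefore excluded. The only point here requiring care is pinning down this fundamental-group calculation together with the identifications of the exceptional small-$p$ cases, and for this I would simply cite the relevant facts about Pao's manifolds from~\cite{meier} rather than reprove them.

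Combining the two observations: when $X$ is simply-connected, the only surviving options among those produced by Theorem~\ref{thm:main2} are $S^4$ and connected sums of copies of $\pm\CP^2$ and $S^2 \X S^2$, which is exactly the assertion of Corollary~\ref{cor:sc}. I do not anticipate any real obstacle here: all of the geometric and combinatorial difficulty is already packaged inside Theorem~\ref{thm:main2}, and what remains is the elementary fundamental-group bookkeeping described above.
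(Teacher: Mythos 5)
Your proof is correct and is exactly the intended derivation: the paper states Corollary~\ref{cor:sc} as an immediate consequence of Theorem~\ref{thm:main2} without a separate written proof, and the filtering by fundamental group that you describe (connected sums are simply-connected iff they have no $S^1 \X S^2$-type, i.e.\ $S^1 \X S^3$, summands; $\pi_1(S_p) \cong \Z/p\Z$ so the spun lens spaces survive only when they are already $S^4$) is precisely the bookkeeping the authors have in mind.
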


Corollary~\ref{cor:sc} is notable in light of the trisection genus additivity problem, which asks whether $g(X_1 \# X_2) = g(X_1) + g(X_2)$.  In particular, there are examples of exotic $\CP^2 \#^2 \overline{\CP}^2$ (manifolds $X$ homeomorphic but not diffeomorphic to $\CP^2 \#^2 \overline{\CP}^2$)~\cite{AP,FS}.  If trisection genus is additive, then the fact that there is some $k$ such that $X \#^k (S^2 \X S^2)$ is diffeomorphic to $\CP^2 \#^2 \overline{\CP}^2 \#^k(S^2 \X S^2)$~\cite{wall} implies that $g(X) = g(\CP^2 \#^2 \overline{\CP}^2) = 3$.  Corollary~\ref{cor:sc} can be interpreted as evidence that no exotic $\CP^2 \#^2 \overline{\CP}^2$ admits a genus-three trisection, which would imply that trisection genus is not additive. For more information on the additivity of trisection genus and exotic pairs, see Section 1.3 of~\cite{LC_M_Rational}.

\begin{remark}
The only progress on the genus-three classification problem appears in work of the first author and Moeller in~\cite{AM}.  In Section 7, they prove that a special class of genus-three trisections, called \emph{Farey trisections}, correspond to standard 4-manifolds (and thus satisfy Meier's conjecture).  Farey trisections are examples of genus-three diagrams with a dependent triple; thus we recover Theorem 7.2 of~\cite{AM}.
\end{remark}

The order of the paper is as follows:  In Section~\ref{sec:prelim}, we dispense with the necessary preliminaries.  In Section~\ref{sec:technical}, we introduce technical lemmas from 3-manifold topology, citing some and proving others which do not appear elsewhere in the literature.  These lemmas are the main input used to prove Proposition~\ref{prop:dp}, the starting point for our classification of weakly reducible genus-three trisections.  Section~\ref{sec:triple} discusses some results on \emph{Heegaard triples}, a generalization of trisection diagrams, following~\cite{MSZ} and~\cite{MZ2} to prove generalizations of the main theorems from those papers.  In Section~\ref{sec:fivechain}, we introduce the notion of a five-chain and surgery on a five-chain, and finally, in Section~\ref{sec:proof}, we put everything together to prove Theorem~\ref{thm:main}.  Section~\ref{sec:depend} extends these ideas to dependent triples.  The paper concludes with questions for future exploration in Section~\ref{sec:question}.

\textbf{Acknowledgements:}  We are grateful to Gabe Islambouli and Maggie Miller for their help at the outset of this project, and we thank Jeffrey Meier for comments on a draft of this paper and for conversations related to this work and to the work in~\cite{meier}.  AZ is supported by NSF award DMS-2405301 and a Simons Foundation Travel Award.

\section{Preliminaries}\label{sec:prelim}

All manifolds are smooth and orientable (unless otherwise specified).  If $Y \subset X$, we let $\eta(Y)$ denote a regular neighborhood of $Y$ in $X$, and we let $X \setminus Y = X - \eta(Y)$.  A $n$-dimensional \emph{1-handlebody} of genus $g$ is the union of an $n$-dimensional zero handle and $g$ $n$-dimensional 1-handles.  Following convention, we refer to a 3-dimensional 1-handlebody simply as a \emph{handlebody}.  A \emph{compression-body} $C$ is obtained from the product $\Sigma \X I$, where $\Sigma$ is a connected, closed surface of genus $g > 0$, by attaching 3-dimensional 2-handles to $\Sigma \X \{0\}$ and capping off any resulting 2-sphere boundary components with a 3-ball.  We let $\pd_+ C = \Sigma \X \{1\}$ and $\pd_- C = \pd C \setminus \pd_+ C$.  As a consequence, a handlebody is a compression-body $C$ such that $\pd_- C = \emp$.

By a \emph{curve} in a surface $\Sigma$, we mean a free homotopy class of an essential simple closed curve.  A \emph{compressing disk} $D$ for a compression-body $C$ is a properly embedded disk $D \subset C$ such that $\pd D$ is a curve in $\Sigma = \pd_+ C$.  It is a standard fact that $D$ is determined up to isotopy by its boundary $\pd D$, which we call a \emph{compressing curve} for $C$.  A \emph{cut system} for a genus-$g$ handlebody $H$ is a collection $\A$ of pairwise disjoint compressing curves such that $\pd H \setminus \A$ is a connected planar surface.  In this case, $\A$ must contain exactly $g$ curves, and we write $H = H_{\A}$.  Two cut systems $\A$ and $\A'$ determine the same handlebody $H$ if and only if $\A$ and $\A'$ are related by a finite sequence of handleslides in $\Sigma$~\cite{johan}.  A curve $c'$ in $\Sigma$ is called \emph{primitive} in $H$ if there exists a compressing curve $c$ for $H$ such that $|c \cap c'| = 1$.

Given a curve $c$ in $\Sigma$, let $\Sigma_c$ denote the surface obtained by capping off the boundary components of $\Sigma \setminus c$ with two disks, $D_0$ and $D_1$.  We say that $\Sigma_c$ is obtained by \emph{compression along $c$}, and we call the glued-in disks $D_0$ and $D_1$ the \emph{scars} of the compression.  If we compress $\Sigma$ along multiple disjoint curves, say $c$ and $c'$, we denote the resulting closed surface by $\Sigma_{c,c'}$.  Note that if $c$ is a compressing curve for $H$ and $D$ is a disk in $H$ bounded by $c$, then we can view $\Sigma_c$ as $\pd(H \setminus D)$.

\begin{remark}\label{rmk:slide}
Any curve $c_1$ in $\Sigma_c$ disjoint from the scars of compression can be viewed as a curve in $\Sigma$ as well, also there is an important subtlety to point out:  If $c_1$ and $c_2$ are homotopic curves in $\Sigma_c$ disjoint from the scars of compression, is does not necessarily imply that $c_1$ and $c_2$ are homotopic in $\Sigma$, but rather that $c_1$ becomes homotopic to $c_2$ in $\Sigma$ after some number of handleslides over the curve $c$ in $\Sigma$ (where each handleslide corresponds precisely to a homotopy of $c_1$ in $\Sigma_c$ that passes $c_1$ over a scar).
\end{remark}

Every closed 3-manifold $Y$ admits a \emph{Heegaard splitting}, a decomposition $Y = H_1 \cup_{\Sigma} H_2$, where $H_1$ and $H_2$ are handlebodies and $\Sigma = \pd H_1 = \pd H_2$.  In this case, the splitting can be represented by a \emph{Heegaard diagram} $(\Sigma;\A,\n)$, where $\A$ and $\n$ are cut systems for $H_1$ and $H_2$, respectively.  A curve $c$ in $\Sigma$ that is a compressing curve for both $H_1$ and $H_2$ is called a \emph{reducing curve}.  In this case, $c$ bounds disks $D_1 \subset H_1$ and $D_2 \subset H_2$, so that $S = D_1 \cup D_2$ is a 2-sphere that meets $\Sigma$ in a single curve.  If $c$ is separating, we can split $Y$ into $Y_1 \# Y_2$ along $S$, and we can cut $\Sigma$ along $c$ to obtain Heegaard surfaces for $Y_1$ and $Y_2$.  If $c$ is nonseparating, then so is $S$, and we can see that $Y = Y' \# (S^1 \X S^2)$ via a standard argument, in which $S$ is isotopic to $\{\text{pt}\} \X S^2$ in the $S^1 \X S^2$ summand.  If there are compressing curves $c_1$ and $c_2$ for $H_1$ and $H_2$, respectively, such that $|c_1\cap c_2| = 1$, we say the splitting is \emph{stabilized}.  In this case, the Heegaard splitting has a genus-one $S^3$ summand, and we can find a lower-genus Heegaard splitting of $Y$, called a \emph{destabilization} of our original splitting.

Standard diagrams come up in a number of places in our arguments, and so we define them here.  A genus-$g$ Heegaard diagram $(\Sigma;\A,\n)$ for $\#^k (S^1 \X S^2)$ is called \emph{standard} if $|\A_i \cap \n_j| = \delta_{ij}$ for $1 \leq i,j \leq g-k$, $\A_i = \n_i$ for $g-k < i \leq g$, and all other pairs of curves in $\A$ and $\n$ are disjoint.  Waldhausen's Theorem implies that every Heegaard splitting of $\#^k(S^1 \X S^2)$ admits a standard diagram~\cite{waldhausen} (see also~\cite{schleimer}).  However, we observe that a given splitting does not have a \emph{unique} standard diagram; see Figure~\ref{fig:standard} for an example.  A genus-2 Heegaard diagram $(\Sigma;\A,\n)$ is called \emph{standard} if there exists a separating reducing curve $c$ for the splitting disjoint from $\A \cup \n$.  In this case, the Heegaard diagram can be reduced to the connected sum of two genus-one diagrams.  These two notions of standardness agree, but the second definition extends the idea of standardness to other genus-two manifolds such as $L(p,q) \# (S^1 \X S^2)$.

\begin{figure}[h!]
  \centering
  \includegraphics[width=.45\linewidth]{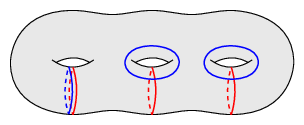} \qquad
    \includegraphics[width=.45\linewidth]{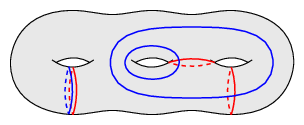} 
  \caption{Two examples of standard Heegaard diagrams for a genus-three Heegaard splitting of $S^1 \X S^2$}
	\label{fig:standard}
\end{figure}

Heegaard theory is also well-developed in the setting of compact 3-manifolds:  A Heegaard splitting for a compact 3-manifold $Y$ is a decomposition $Y = C_1 \cup_{\Sigma} C_2$, where $C_1$ and $C_2$ are compression-bodies with $\Sigma = \pd_+ C_1 = \pd_+ C_2$ and $\pd Y = \pd_- C_1 \cup \pd_- C_2$.  As above, we say that the Heegaard splitting is \emph{reducible} if there exists a compressing curve $c$ in $\Sigma$ for both $C_1$ and $C_2$.  A classical result known as Haken's Lemma asserts

\begin{lemma}\cite{haken}\label{lem:haken}
If $Y = C_1 \cup_{\Sigma} C_2$ is a Heegaard splitting of a reducible 3-manifold $Y$, then $\Sigma$ is reducible.
\end{lemma}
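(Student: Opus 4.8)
The plan is to produce a reducing curve for the splitting out of an essential sphere in $Y$ by an innermost-disk argument. Since $Y$ is reducible it contains a $2$-sphere that does not bound a ball; among all such essential spheres, choose $S$ to meet $\Sigma$ transversally in the minimum possible number of circles, and set $P_i = S \cap C_i$, so that each $P_i$ is a disjoint union of properly embedded planar surfaces in $C_i$ with boundary on $\Sigma = \partial_+ C_i$.

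Two reductions clear the ground. (i) $S \cap \Sigma \neq \emptyset$: otherwise $S$ lies in some $C_i$, and since compression-bodies are irreducible, $S$ bounds a ball in $C_i \subseteq Y$, contradicting essentiality. (ii) Every circle of $S \cap \Sigma$ is essential in $\Sigma$: if not, an innermost such inessential circle bounds a disk $E \subset \Sigma$ whose interior is disjoint from $S$, and pushing $S$ across $E$ is an ambient isotopy lowering $|S \cap \Sigma|$ without affecting whether $S$ is essential, contradicting minimality. (Equivalently, surgering $S$ along $E$ produces spheres $S'$, $S''$ with $|S' \cap \Sigma| + |S'' \cap \Sigma| < |S \cap \Sigma|$, and if both bounded balls then so would $S$.) Now the circles of $S \cap \Sigma$ are pairwise disjoint circles on $S \cong S^2$, so some circle $c$ is innermost on $S$: it bounds a disk $D \subset S$ with interior disjoint from $\Sigma$, hence $D$ is a genuine compressing disk for one of the sides, say $C_1$. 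If $|S \cap \Sigma| = 1$, then $c$ cuts $S$ into $D$ and a second disk $D' \subset C_2$ with $\partial D = \partial D' = c$, so $c$ bounds compressing disks in both $C_1$ and $C_2$; thus $c$ is a reducing curve and $\Sigma$ is reducible.

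The main obstacle is ruling out $|S \cap \Sigma| \geq 2$, which I would do by contradicting minimality. Keep the innermost disk $D \subset C_1$ with $\partial D = c$, and let $Q$ be the component of $P_2$ incident to $c$; since $|S \cap \Sigma| \geq 2$ the surface $Q$ is not a disk, so $\partial Q$ contains a second circle of $S \cap \Sigma$. If $c$ also bounds a compressing disk $D''$ in $C_2$, then $D \cup D''$ is already a reducing sphere and we are done; so assume not. The strategy is then to locate a $\partial$-compression of a non-disk piece of $P_1$ or $P_2$ — say a $\partial$-compressing disk $\Delta \subset C_2$ for $Q$ with $\partial \Delta = a \cup b$, where $a$ is an essential arc of $Q$ joining two distinct circles of $\partial Q$ and $b \subset \Sigma$ — and to push $S$ across $\Delta$; this isotopy band-sums those two intersection circles into one, lowering $|S \cap \Sigma|$ and contradicting minimality. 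The delicate points, which constitute the crux, are: (a) producing such a reduction once no piece is a disk, for which one treats $c$ separating and non-separating in $\Sigma$ separately, in the separating case instead compressing $\Sigma$ along $D$ and analyzing the resulting (possibly disconnected) lower-genus splitting; and (b) checking at each step that the new sphere remains essential, always by the same principle that a ball-bounding outcome forces $S$ itself to bound a ball. This cut-and-paste analysis is Haken's original argument~\cite{haken}; together with the $|S \cap \Sigma| = 1$ case it shows $\Sigma$ has a reducing curve.
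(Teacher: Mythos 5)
The paper does not actually prove this statement: it is Haken's theorem, quoted with a citation to \cite{haken}, so there is no in-paper argument to compare against. Your setup is the standard one and is correct as far as it goes --- minimizing $|S \cap \Sigma|$ over essential spheres, using irreducibility of compression-bodies to force $S \cap \Sigma \neq \emptyset$, eliminating circles inessential in $\Sigma$ (your parenthetical surgery argument is the right justification; the ``ambient isotopy across $E$'' phrasing is not, since the disk of $S$ bounded by an innermost inessential circle need not cobound a ball with $E$), and observing that $|S\cap\Sigma|=1$ immediately yields a reducing curve.

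However, the case $|S\cap\Sigma|\ge 2$ is where essentially all of the content of Haken's theorem lives, and your proposal does not close it. The mechanism you propose --- find a $\partial$-compressing disk $\Delta$ for a non-disk piece $Q$ whose arc $a$ joins two \emph{distinct} circles of $\partial Q$, and isotope $S$ across $\Delta$ to band two intersection circles into one --- is not guaranteed to be available. One can arrange by minimality that every component of $P_1$ and $P_2$ is incompressible, and an incompressible, non-$\partial$-parallel planar piece is indeed $\partial$-compressible; but the essential arc of that $\partial$-compression may have both endpoints on the \emph{same} component of $\partial Q$, in which case the isotopy across $\Delta$ splits one intersection circle into two and \emph{increases} $|S\cap\Sigma|$, yielding no contradiction with minimality. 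Similarly, ``compressing $\Sigma$ along $D$ and analyzing the resulting lower-genus splitting'' does not directly set up an induction: attaching the dual $2$-handle to $C_2$ does not in general produce a compression-body with positive boundary $\Sigma_D$, so what you get is an untelescoped (generalized) splitting, not a Heegaard splitting of smaller genus. You are upfront that this reduction is the crux and defer it to Haken's original cut-and-paste analysis; that is an honest attribution, but it means the proposal is an outline of the classical proof with its hardest step asserted rather than proved. Since the paper also treats the lemma as a black box, this is not a disagreement with the paper --- but if the goal is a self-contained proof, the $|S\cap\Sigma|\ge 2$ reduction (handled in the literature via normal-surface or edge-slide arguments, as in Haken, Jaco, or Scharlemann's survey) is precisely the step that must actually be carried out.
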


An important and technical tool in our work is a generalized Heegaard splitting.  A \emph{generalized Heegaard splitting} for $Y$ is the expression of $Y$ as
\[ Y = C_1^1 \cup_{\Sigma_1} C_2^1 \cup_{S_1} C_1^2 \cup_{\Sigma_2} \cup \dots \cup_{S_{n-1}} C_1^n \cup_{\Sigma_n} \cup C_2^n,\]
where
\begin{enumerate}
\item each $C_1^i$ and $C_2^i$ is a compression-body,
\item $\Sigma_i = \pd_+ C_1^i = \pd_+ C_2^i$,
\item $S_i = \pd_- C_2^i = \pd_- C_1^{i+1}$, and
\item $\pd Y = \pd_- C_1 \cup \pd_- C_2^n$.
\end{enumerate}
We call the surfaces $\Sigma_i$ \emph{thick surfaces} and $S_i$ \emph{thin surfaces} of the generalized Heegaard splitting, and we will abuse notation and denote a generalized splitting by the sequence of thick/thin surfaces $(\Sigma_1,S_1,\dots,S_{n-1},\Sigma_n)$ in $Y$, which uniquely determines the generalized Heegaard splitting.  A Heegaard splitting $Y = C_1 \cup_{\Sigma} C_2$ is called \emph{weakly reducible} if there exist disjoint curves $c_1$ and $c_2$ bounding compressing disks in $C_1$ and $C_2$, respectively, and $c_1$ and $c_2$ are called a \emph{weak-reducing pair}.  Generalized Heegaard splittings arise naturally from weakly reducible Heegaard splittings:  Suppose that $\Sigma$ is a weakly reducible Heegaard surface for $Y$ with a weak-reducing pair $c_1$ and $c_2$.  In a process known as \emph{untelescoping}, we can obtain a generalized Heegaard splitting $(\Sigma_{c_1},\Sigma_{c_1,c_2},\Sigma_{c_2})$ for $Y$.  In this case, the compression-body $C_2^1$ can be obtained by attaching a 2-handle to $\Sigma_{c_1}$ along $c_2$ (and, in parallel, $C_1^2$ is obtained by attaching a 2-handle to $\Sigma_{c_2}$ along $c_1$).  A detailed treatment of generalized Heegaard splittings can be found in~\cite{SSS}.

\begin{remark}
In this paper, we will deal exclusively with genus-three weakly reducible Heegaard splittings $Y = C_1 \cup_{\Sigma} C_2$.  Suppose that $c_1$ and $c_2$ are a weak-reducing pair such that both $c_1$ and $c_2$ are non-separating curves in $\Sigma$.  There are two qualitatively different cases:  If $c_1$ and $c_2$ are mutually non-separating, the induced thin surface $\Sigma_{c_1,c_2}$ is a torus.  If $c_1$ and $c_2$ are mutually separating, $\Sigma_{c_1,c_2}$ is the disjoint union of two tori.  A schematic diagram of untelescoping in either case is shown in Figure~\ref{fig:untel}.
\end{remark}

\begin{figure}[h!]
  \centering
  \includegraphics[width=.25\linewidth]{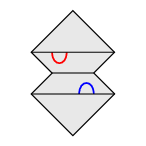} \raisebox{1.75cm}{$\longleftarrow$} \quad
  \includegraphics[width=.25\linewidth]{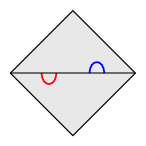} \quad \raisebox{1.75cm}{$\longrightarrow$}
  \includegraphics[width=.25\linewidth]{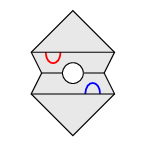} 
\put (-148,51) {$\Sigma$}
\put (-320,51) {$\Sigma_{c_1,c_2}$}
\put (-307,68) {$\Sigma_{c_2}$}
\put (-307,34) {$\Sigma_{c_1}$}
\put (-26,51) {$\Sigma_{c_1,c_2}$}
\put (-17,68) {$\Sigma_{c_2}$}
\put (-17,34) {$\Sigma_{c_1}$}
\caption{Schematic diagrams for the two different untelescopings of a weakly reducible genus-three Heegaard splitting.}
	\label{fig:untel}
\end{figure}

As with Heegaard splittings above, we say that a thick surface $\Sigma_i$ is \emph{weakly reducible} if there exist disjoint compressing curves $c_1$ and $c_2$ in $\Sigma_i$ for $C_1^i$ and $C_2^i$, respectively.  We state a useful proposition, which is foundational to the theory of thin position of 3-manifolds.

\begin{proposition}\cite{CG,ST}\label{prop:thin}
Suppose $(\Sigma_1,S_1,\dots,S_{n-1},\Sigma_n)$ is a generalized Heegaard splitting of $Y$.  If a thin surface $S_i$ is compressible, then at least one thick surface $\Sigma_j$ is weakly reducible.
\end{proposition}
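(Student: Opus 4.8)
The plan is to prove the contrapositive: assuming that \emph{no} thick surface $\Sigma_j$ is weakly reducible --- equivalently, every $\Sigma_j$ is strongly irreducible, so that every compressing disk for $C_1^j$ meets every compressing disk for $C_2^j$ --- I would show that every thin surface $S_i$ is incompressible in $Y$. Two harmless preliminary reductions come first. If some $\Sigma_j$ is reducible then it is automatically weakly reducible, since the two disks bounded by a reducing sphere are disjoint compressing disks on opposite sides; so I may assume no $\Sigma_j$ is reducible, and then a standard argument using Haken's Lemma (Lemma~\ref{lem:haken}) on the pieces cut off by the thin surfaces reduces to the case that $Y$ is irreducible. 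I may likewise assume the generalized splitting has no trivial (product) compression-bodies except ones meeting $\partial Y$.

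Next I would use the standard fact that $\partial_- C$ is incompressible in any compression-body $C$. Fix a thin surface $S_i$, which lies between $C_2^i$ and $C_1^{i+1}$, and let $D$ be a compressing disk for $S_i$; push the interior of $D$ slightly off $S_i$ so that near $\partial D$ it enters one of these two compression-bodies, say $C_1^{i+1}$. Because $S_i \subseteq \partial_- C_1^{i+1}$ is incompressible in $C_1^{i+1}$, the disk $D$ cannot be contained in $C_1^{i+1}$, so it must cross the adjacent thick surface; write $\Sigma := \Sigma_{i+1}$, with $C_1 := C_1^{i+1}$ (the side whose negative boundary contains $S_i$) and $C_2 := C_2^{i+1}$. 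I would then choose $D$ so as to minimize $|D \cap \bigcup_j \Sigma_j|$ among all compressing disks for $S_i$; using irreducibility of $Y$, this allows me to discard any circle of $D \cap \Sigma$ that is innermost on $D$ and bounds a disk in $\Sigma$, so no such circle remains.

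The heart of the proof is to manufacture from $D$ a weak-reducing pair for $\Sigma$, contradicting the strong irreducibility of $\Sigma_{i+1}$. An innermost subdisk of $D$ cut off by $\Sigma$ and lying in $C_2$ becomes, after the cleanup above, a compressing disk $E_2 \subset C_2$ for $\Sigma$. On the other side, let $P$ be the component of $D \cap C_1$ containing $\partial D$; it is a planar surface whose outer boundary $\partial D$ is essential in $S_i = \partial_- C_1$ and whose remaining boundary circles lie on $\Sigma$. Analyzing innermost disks and outermost arcs of the pieces of $D$ in $C_1$, and using that $S_i$ is incompressible in $C_1$, one extracts a compressing disk $E_1 \subset C_1$ for $\Sigma$ whose boundary lies among the circles of $D \cap \Sigma$, hence is disjoint from $\partial E_2$. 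Then $E_1$ and $E_2$ are a weak-reducing pair for $\Sigma_{i+1}$ --- a contradiction. Equivalently, one may localize the problem to the compact submanifold lying between $S_{i-1}$ and $S_{i+1}$ and appeal to a Casson--Gordon-type result on strongly irreducible Heegaard splittings of $3$-manifolds with incompressible boundary.

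The hard part will be exactly this last extraction. The delicate points are: (i) $P$ is typically not a disk, so one must argue that a genuine compressing \emph{curve} for $\Sigma$ in $C_1$ --- not merely a disk with part of its boundary on $S_i$ --- can be located among the circles that $D$ cuts on $\Sigma$; (ii) verifying that the curves produced on the two sides of $\Sigma$ are essential in $\Sigma$, which is where minimality of $|D \cap \bigcup_j \Sigma_j|$ and irreducibility of $Y$ are used; and (iii) the bookkeeping needed when $S_i$ or the $\Sigma_j$ are non-separating and when trivial compression-bodies sit at the ends of the splitting. These are precisely the technical estimates established in~\cite{CG} and~\cite{ST}, so in the body of the paper we simply cite those references; the sketch above records how one would reconstruct the argument.
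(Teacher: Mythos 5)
The paper offers no proof of Proposition~\ref{prop:thin}; it is quoted directly from \cite{CG,ST}, so your decision to defer the technical core to those references matches what the authors do. Your overall strategy --- prove the contrapositive, use incompressibility of $\partial_- C$ in a compression-body $C$ to force a compressing disk for a thin surface across a thick surface, then extract a weak-reducing pair from the intersection circles --- is indeed the standard Casson--Gordon/Scharlemann--Thompson argument.

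There is, however, one concrete gap in the sketch as written: you never control the intersections of $D$ with the \emph{other} thin surfaces $S_k$, $k \neq i$. You minimize only $|D \cap \bigcup_j \Sigma_j|$, but an innermost circle of $D \cap \Sigma_{i+1}$ bounds a subdisk of $D$ that enters $C_2^{i+1}$ and may then exit through $S_{i+1}$ into $C_1^{i+2}$ and beyond; such a subdisk is not contained in $C_2^{i+1}$, so the step ``an innermost subdisk \dots becomes a compressing disk $E_2 \subset C_2$'' fails, and the same problem undermines the analysis of the planar pieces on the $C_1$ side. The standard repair is to run the innermost-circle argument first on $D \cap \bigl(\bigcup_k S_k\bigr)$: among all compressing disks for the union of the thin surfaces, choose one minimizing intersection with that union; inessential innermost circles are removed by disk swaps, and an essential one hands you a compressing disk for some thin surface $S_k$ --- possibly with $k \neq i$, which is exactly why the proposition only asserts that \emph{some} $\Sigma_j$ is weakly reducible --- whose interior misses every thin surface. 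Only then does the disk live in a single block $C_1^{j} \cup_{\Sigma_j} C_2^{j}$, and the localization you mention becomes correct: it is to a single block with one thick surface and with the thin surface in its boundary, not to the region between $S_{i-1}$ and $S_{i+1}$, which contains two thick surfaces and an interior thin surface and to which the ``strongly irreducible implies incompressible boundary'' lemma does not directly apply. Two smaller soft spots: the reduction to $Y$ irreducible via Haken's Lemma~\ref{lem:haken} ``on the pieces'' is circular as stated, since a reducing sphere need not miss the thin surfaces; and there is no reason an innermost subdisk of $D$ should lie in $C_2$ rather than $C_1$, so both cases of the extraction must be addressed.
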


In~\cite{GK}, Gay and Kirby showed that every closed smooth 4-manifold $X$ admits a trisection $\T$, a decomposition $X = X_1 \cup X_2 \cup X_3$ in which each $X_i$ is a 4-dimensional 1-handlebody $\natural^{k_i} (S^1 \X D^3)$, and each intersection $H_i = X_i \cap X_{i+1}$ is a (3-dimensional) handlebody.  In this case, it follows that the triple intersection $\Sigma = X_1 \cup X_2 \cup X_3$ is a genus-$g$ surface.  When we wish to emphasize the complexity of the components of a trisection, we will say that $\T$ is a $(g;k_1,k_2,k_3)$-trisection.  In analogy to the 3-dimensional situation, a trisection is determined up to diffeomorphism by a \emph{trisection diagram} $(\Sigma;\A,\n,\g)$, in which $\A$, $\n$, and $\g$ are cut systems for $H_1$, $H_2$, and $H_3$, respectively, which we will usually denote $H_{\A}$, $H_{\n}$, and $H_{\g}$.

As in the case of Heegaard splittings, a trisection is said to be \emph{reducible} if there exists a curve $c$ in $\Sigma$ which is a compressing curve for all three of $H_{\A}$, $H_{\n}$, and $H_{\g}$, called a \emph{reducing curve}.  In this case, if $c$ is separating, we can we can express $\T$ as a connected sum $\T = \T' \# \T''$, where $X = X' \# X''$ and $\T'$ and $\T''$ are trisections of $X'$ and $X''$, respectively.  If $c$ is nonseparating, then a standard argument shows that $X= X' \# (S^1 \X S^3)$, and $\T$ can be decomposed as the connected sum of a trisection $\T'$ of $X'$ and the standard genus-one trisection of $S^1 \X S^3$.  If there exists a curve $c$ that compresses in $H_{\A}$ and a curve $c'$ that compresses in both $H_{\n}$ and $H_{\g}$ such that $|c \cap c'|$, we say that $\T$ is \emph{stabilized}.  If $\T$ is stabilized, then $\T$ is reducible and we can split off a genus-one trisection of $S^4$.  In other words, we can find a lower-genus trisection for the same 4-manifold.

Motivated by the definition of weak reducibility from Heegaard theory, we introduce a similar concept here:  We say that $\T$ is \emph{weakly reducible} if there exist non-separating curves $c$ and $c'$ in $\Sigma$ such that $c$ is a compressing curve for $H_{\A}$ and $c'$ is a compressing curve for both $H_{\n}$ and $H_{\g}$.  An example of a weakly reducible trisection can be seen in Figure~\ref{fig:spunrp2}.

\begin{figure}[h!]
  \centering
  \includegraphics[width=.5\linewidth]{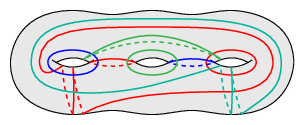}
  \caption{A genus-three trisection diagram $(\Sigma;\A,\n,\g)$ for the manifold $S_2$, the spin of $\mathbb{RP}^3$, in which curves in $\A$ are red, curves in $\n$ are blue, and curves in $\g$ are green.  Note that the teal curve belongs to both $\n$ and $\g$ and is disjoint from a curve in $\A$, so this trisection is weakly reducible.}
	\label{fig:spunrp2}
\end{figure}

\begin{remark}
The reader may wonder why we require the compressing curves in the definition of weak reducibility to be non-separating, because this is not a requirement for a weakly reducible Heegaard splitting.  However, given a pair $c$ and $c'$ of weak-reducing curves for a Heegaard surface, we can always replace $c$ and $c'$ with non-separating curves by standard arguments.  Thus, these definitions do align, although for trisections we add the non-separating requirement to the definition, since it is not a consequence of the less restrictive definition as it is in dimension three.
\end{remark}

In our exploration of genus-three trisections, we can rule out certain complexity parameters using a known classification.

\begin{proposition}\cite{MSZ}\label{prop:g-1}
Suppose that $\T$ is a $(g;k_1,k_2,k_3)$-trisection, where $k_i \geq g-1$ for some $i$.  Then $\T$ is reducible and can be expressed as the connected sum of genus-one trisections.
\end{proposition}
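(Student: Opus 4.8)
The plan is to induct on the genus $g$: at each stage I would produce either a nonseparating reducing curve for $\T$ or a stabilization of $\T$, split off the corresponding genus-one trisection (of $S^1 \X S^3$ or of $S^4$), and apply the inductive hypothesis to the resulting trisection of genus $g-1$. The base case $g=1$ is trivial, since a genus-one trisection is (vacuously) a connected sum of genus-one trisections.

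First I would use the symmetry of trisection diagrams to relabel so that $k := k_3 \geq g-1$, and consider the genus-$g$ Heegaard splitting $\partial X_3 = H_{\n} \cup_{\Sigma} H_{\g}$ of $\#^{k}(S^1 \X S^2)$. By Waldhausen's theorem it admits a standard diagram, and since $g - k \in \{0,1\}$ there are two cases. If $k = g$, then after handleslides $H_{\n}$ and $H_{\g}$ admit a common cut system $\{c_1, \dots, c_g\}$; crucially, any handleslide among the $c_i$ can then be performed simultaneously in both cut systems, so one may freely standardize other splittings without disturbing the equality $\n = \g$. If $k = g-1$, then after handleslides there are cut systems $\{b_1, \dots, b_g\}$ for $H_{\n}$ and $\{b_1, \dots, b_{g-1}, c\}$ for $H_{\g}$ with $|b_g \cap c| = 1$ and $b_g, c$ disjoint from $b_1, \dots, b_{g-1}$; here $b_g, c$ form a stabilizing pair for the splitting of $\partial X_3$.

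Next I would bring in the splitting $\partial X_2 = H_{\A} \cup_{\Sigma} H_{\n} = \#^{k_2}(S^1 \X S^2)$ and put it in standard form via Waldhausen, so that $g - k_2$ of the pairs of $\A$- and $\n$-curves meet once and the remaining $k_2$ pairs coincide. When $k = g$ this causes no conflict, since $\n = \g$ literally: if $k_2 \geq 1$ there is then a curve bounding disks in $H_{\A}$ (a standardized $\A$-curve equal to some $\n$-curve) and in both $H_{\n}$ and $H_{\g}$ (it is one of the $c_i$), a nonseparating reducing curve, and splitting along it writes $\T = \T' \# (\text{standard genus-one trisection of } S^1 \X S^3)$; if $k_2 = 0$, the standard genus-$g$ diagram of $S^3$ furnishes an $\A$-curve meeting some $c_i$ once, exhibiting $\T$ as stabilized, so $\T = \T' \#(\text{standard genus-one trisection of } S^4)$. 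When $k = g-1$ a short case analysis on $k_2$ (and, when $k_2 = 0$, on the permutation appearing in the standard $S^3$ diagram) similarly produces one of $b_1, \dots, b_{g-1}$ — each of which bounds disks in both $H_{\n}$ and $H_{\g}$ — that either bounds a disk in $H_{\A}$ or is met once by a compressing curve of $H_{\A}$, or else shows that $b_g$, now also equal to a compressing curve of $H_{\A}$, together with $c$ is a stabilizing configuration. In every case $\T$ is reducible and $\T = \T' \# (\text{genus-one trisection})$, where $\T'$ is a $(g-1; k_1', k_2', k_3')$-trisection obtained by subtracting $1$ from exactly one $k_i$; a direct check shows $k_3' \in \{g-2, g-1\}$, so some parameter of $\T'$ is at least its genus minus one and the inductive hypothesis applies, and iterating down to $g=1$ completes the proof.

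The main obstacle is the coordination of the two standard forms in the case $k = g-1$: Waldhausen's theorem standardizes the splitting of $\partial X_2$ only up to handleslides of the cut system of $H_{\n}$, and a handleslide involving the stabilizing curve $b_g$ need not be realizable as a handleslide of the cut system of $H_{\g}$, so it can destroy the standard form of $\partial X_3$. Resolving this — by showing the necessary handleslides can be taken within $\{b_1, \dots, b_{g-1}\}$, by exploiting the rigidity of non-minimal-genus standard diagrams of $S^3$ and $\#^m(S^1 \X S^2)$, or by treating the stabilizing curve $b_g$ separately using that it is primitive in $H_{\g}$ — is where the real work lies. A secondary, routine point is the bookkeeping verifying that $\T'$ still satisfies the hypothesis so that the induction closes.
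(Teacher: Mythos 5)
First, a point of reference: the paper does not prove Proposition~\ref{prop:g-1}; it is quoted from~\cite{MSZ}, so there is no in-paper argument to compare against, and your proposal has to stand on its own. Your overall inductive frame (find a nonseparating reducing curve or a stabilizing pair, split off a genus-one summand, check the hypothesis persists) is the right shape, and your $k_i=g$ branch is correct --- in fact it is simpler than you make it: when $k_3=g$ the cut systems $\n$ and $\g$ are slide-equivalent, so $H_{\n}$ and $H_{\g}$ are literally the same handlebody, and \emph{every} compressing curve of one compresses in the other. Hence Haken's Lemma (if $k_2\geq 1$) or Waldhausen's stabilization theorem (if $k_2=0$) applied to $H_{\A}\cup_\Sigma H_{\n}$ immediately hands you a reducing curve or a stabilizing pair for the trisection, with no need to coordinate standard diagrams at all.

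The genuine gap is the $k_i=g-1$ branch, and you have correctly located it but not closed it. The assertion that ``a short case analysis on $k_2$ \dots similarly produces one of $b_1,\dots,b_{g-1}$ that either bounds a disk in $H_{\A}$ or is met once by a compressing curve of $H_{\A}$'' is not a routine verification --- it is essentially the whole content of the $(g,g-1)$ classification in~\cite{MSZ}. The obstruction you name is real and is not resolved by any of the three strategies you list as stated: Waldhausen standardizes $(\Sigma;\A,\n)$ only after handleslides of the cut system of $H_{\n}$, and a slide of some $b_i$ over the stabilizing curve $b_g$ produces a curve that still compresses in $H_{\n}$ but need not compress in $H_{\g}$; moreover, standard diagrams of $\#^m(S^1\times S^2)$ are not unique (see Figure~\ref{fig:standard} of this paper), so you cannot argue that the two standardizations must be compatible by rigidity. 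What is actually needed is a lemma controlling how an arbitrary cut system for $H_{\n}$ interacts with the set of reducing curves of the stabilized splitting $H_{\n}\cup_\Sigma H_{\g}$ (equivalently, with the unique-up-to-isotopy genus-$(g-1)$ splitting sitting below it), and no such lemma is proved or precisely stated in your proposal. Until that step is supplied, the induction does not run in the $k_i=g-1$ case. (A minor secondary slip: when you split along a nonseparating reducing curve, all three $k_i$ drop by one, not exactly one of them; your conclusion $k_3'\geq (g-1)-1$ is still correct in every case.)
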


As a consequence, we will henceforth assume that any $(3;k_1,k_2,k_3)$-trisection appearing in this paper will satisfy $k_i \in \{0,1\}$, since these are the unclassified cases.

The last topic in this section is surgery on a loop.  Let $X$ be a closed 4-manifold, and let $\ell$ be a loop in $X$; that is, an embedded $S^1$.  Consider a 4-manifold $X_{\ell} = (X \setminus \ell) \cup (S^2 \X D^2)$.  We say that $X_{\ell}$ is obtained by \emph{surgery on $\ell$}.  Up to diffeomorphism, there are at most two ways to cap off $S^1 \X S^2 = \pd(X \setminus \ell)$ with $S^2 \X D^2$, and so there are at most two distinct 4-manifolds $X_{\ell}$ and $X'_{\ell}$ obtained by surgery on the loop $\ell$ in $X$~\cite{gluck}.  The 4-manifolds $X_{\ell}$ and $X'_{\ell}$ are called \emph{siblings}.

On the other hand, let $X$ again be a closed 4-manifold, and consider a 2-sphere $S \subset X$ with $[S] \cdot [S] = 0$, so that $\overline{\eta(S)}$ is diffeomorphic to $S^2 \X D^2$.  Define $X_S = (X \setminus S) \cup (S^1 \X D^3)$.  We say that $X_S$ is the result of \emph{surgery on $S$}, and up to diffeomorphism, $X_S$ is unique~\cite{LP}.  If $X_{\ell}$ is obtained by surgery on a loop $\ell$ in $X$, then $X$ is obtained by surgery on the corresponding 2-sphere in $X_{\ell}$, and vice versa.

Now we will define the manifolds $S_p$ and $S_p'$ mentioned in the introduction.  Let $X = S^1 \X S^3$, and let $\ell_p$ be any loop in $S^1 \X S^3$ such that $[\ell_p] = p \in \Z = \pi_1(S^1 \X S^3)$.  (Note that $\ell_p$ is well-defined up to isotopy, since homotopy and isotopy coincide for loops in dimension four.)  Finally, define $S_p$ and $S_p'$ to be the two 4-manifolds obtained by surgery on $\ell_p$.  The manifold $S_p$ is also the spun lens space as described in~\cite{meier}, which contains a number of additional details about these manifolds.  In particular, Pao proved that $S_p$ and $S_p'$ are diffeomorphic if and only if $p$ is odd~\cite{pao}, and Meier produced $(3;1,1,1)$-trisections for $S_p$ and $S_p'$.  Meier's diagram for $S_2$ is shown in Figure~\ref{fig:spunrp2}.  Chu and Tillmann proved that for any closed 4-manifold $X$, we have $g(X) \geq \chi(X) - 2 + 3 \text{rk}(\pi_1(X))$\cite{CT}.  This inequality implies

\begin{lemma}
If $p \neq 1$, both $S_p$ and $S_p'$ have trisection genus three, and if $p \neq 0,1$, their genus-three trisections are irreducible.
\end{lemma}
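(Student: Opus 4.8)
The plan is to read off the fundamental group and Euler characteristic of $S_p$ and $S_p'$ from the surgery description $S_p = \bigl((S^1 \X S^3) \setminus \ell_p\bigr) \cup (S^2 \X D^2)$, feed these invariants into the Chu--Tillmann inequality for the lower bound, and match it with Meier's $(3;1,1,1)$-trisections for the upper bound. First I would note that reversing the orientation of $\ell_p$ gives a loop representing $-p$ with the same underlying submanifold, so $S_p \cong S_{-p}$ and $S_p' \cong S_{-p}'$, and we may assume $p \geq 0$. A van Kampen computation on the decomposition above gives $\pi_1(S_p) \cong \Z/p\Z$: removing $\ell_p$ (codimension three) leaves $\pi_1 = \Z = \langle t \rangle$ unchanged, and gluing in $S^2 \X D^2$ imposes exactly the relation killing the longitude of $\ell_p$, which represents $t^p$ and which bounds a disk in the glued-in $S^2 \X D^2$. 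The sibling $S_p'$ differs only by precomposing the gluing with a Gluck twist of $S^1 \X S^2$, which acts trivially on $\pi_1$, so $\pi_1(S_p') \cong \Z/p\Z$ as well. An inclusion--exclusion count of Euler characteristics on the same decomposition --- or the spun-lens-space description in~\cite{meier,pao} --- yields $\chi(S_p) = \chi(S_p') = \chi(S^2 \X D^2) = 2$, since $S^1 \X S^3$, $S^1 \X D^3$, and $S^1 \X S^2$ all have Euler characteristic zero.

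When $p \neq 1$ the group $\Z/p\Z$ (taking $\Z/0\Z = \Z$) is nontrivial cyclic, so $\text{rk}(\pi_1(S_p)) = \text{rk}(\pi_1(S_p')) = 1$, and the Chu--Tillmann inequality $g(X) \geq \chi(X) - 2 + 3\,\text{rk}(\pi_1(X))$ from~\cite{CT} gives $g(S_p), g(S_p') \geq 3$. Since Meier exhibited genus-three trisections of $S_p$ and $S_p'$ for every $p$, the reverse inequality is immediate, and hence $g(S_p) = g(S_p') = 3$ whenever $p \neq 1$.

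For irreducibility, assume $p \neq 0,1$, so $G := \pi_1(S_p) \cong \Z/p\Z$ is a nontrivial finite group (and likewise for $S_p'$). Suppose for contradiction that some genus-three trisection $\T$ of $S_p$ is reducible, with reducing curve $c$. If $c$ is non-separating, then the standard argument recalled in Section~\ref{sec:prelim} gives $S_p \cong X' \# (S^1 \X S^3)$, so $G \cong \pi_1(X') * \Z$ is infinite, a contradiction. If $c$ is separating, then $\T = \T' \# \T''$ realizes $S_p = X' \# X''$ with trisection genera $g', g'' \geq 1$ summing to $3$, hence $\{g',g''\} = \{1,2\}$; by~\cite{GK} the genus-one factor is $S^4$, $S^1 \X S^3$, or $\pm\CP^2$, and by Theorem~\ref{thm:MZ} the genus-two factor is either $S^2 \X S^2$ or a connected sum of genus-one 4-manifolds. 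In every case both factors have free fundamental group, so $G$ is a free group, contradicting $G \cong \Z/p\Z$ with $p \geq 2$. Therefore $\T$ is irreducible, and the same reasoning applies verbatim to $S_p'$. I expect the main obstacle to be pure bookkeeping: verifying that the van Kampen argument picks out the relator $t^p$ and that the Gluck-twist ambiguity distinguishing the two siblings leaves $\pi_1$ and $\chi$ untouched; granting the cited classifications and the Chu--Tillmann bound, both assertions then follow formally.
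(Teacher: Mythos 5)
Your proposal is correct and follows essentially the same route as the paper: the paper also reads off $\chi = 2$ and $\pi_1 = \Z/p\Z$ (asserting these without the van Kampen details you supply), applies the Chu--Tillmann bound together with Meier's genus-three diagrams to pin down the genus, and rules out reducibility for $p \neq 0,1$ by noting that Theorem~\ref{thm:MZ} would force $X$ to be a connected sum of $S^2 \X S^2$, $\pm\CP^2$, and $S^1 \X S^3$, which is incompatible with the finite nontrivial fundamental group. Your version simply fills in the bookkeeping the paper leaves implicit.
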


\begin{proof}
The manifold $X = S_p$ or $X = S_p'$ satisfies $\chi(X) = 2$ and $\pi_1(X) = \Z/p\Z$.  Thus, if $p \neq 1$, we have $g(X) \geq 2 - 2 + 3$ by the inequality above.  By Theorem~\ref{thm:MZ}, if $X$ admits a reducible genus-three trisection, then $X$ can be expressed as a connected sum of $S^2 \X S^2$, $\pm \CP^2$, and $S^1 \X S^3$, and so if $p \neq 0,1$, it follows that any genus-three trisection of $X$ is irreducible.
\end{proof}
Note that when $p=0$, we have $S_0 = (S^2 \X S^2) \# (S^1 \X S^3)$ and $S_0' = \CP^2 \# \overline{\CP}^2 \# (S^1 \X S^3)$.  When $p = 1$, we have that $S_1$ and $S_1'$ are diffeomorphic to $S^4$.

\section{Technical lemmas}\label{sec:technical}

We begin to build up to our proof of the main theorem by importing three lemmas about genus-two Heegaard splittings from the work of Cho and Koda.

\begin{lemma}\cite[Lemma 2.5]{ChoKoda1}\label{lem:primitive}
Suppose $H \cup_{\Sigma} H'$ is a genus-two Heegaard splitting for $S^1 \X S^2$, with $c_1$ and $c_2$ disjoint curves bounding disks in $H$ such that $c_1$ and $c_2$ are primitive in $H'$.  Then there exists a compressing curve $c$ for $H$ such that $|c_1 \cap c| = |c_2 \cap c| = 1$.
\end{lemma}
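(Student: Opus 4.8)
The plan is to reduce the lemma to a homological computation together with a single genuinely geometric step that uses the hypothesis $X = S^1 \X S^2$. First observe that if $c_1$ and $c_2$ are isotopic in $\Sigma$, then any compressing curve meeting $c_1$ once — one exists because $c_1$ is primitive — meets $c_2$ once as well, so we may assume $c_1$ and $c_2$ are non-isotopic; then $\{c_1,c_2\}$ is a cut system for $H$. (We take the curve $c$ we seek to bound a disk in $H'$; note that two compressing curves of the same handlebody have even geometric intersection number, so no compressing curve of $H$ can meet $c_1$ exactly once, which is why $c$ must be a compressing curve for $H'$.) Because $c_1$ is primitive in $H'$, I would fix a cut system $\{d_1,e_1\}$ for $H'$ with $|c_1\cap d_1| = 1$ and $c_1\cap e_1 = \emp$; this is the standard consequence of primitivity, obtained by handlesliding the second curve of any cut system of $H'$ containing a dual of $c_1$ off of $c_1$.

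Next I would run the homology. Let $L_H = \langle[c_1],[c_2]\rangle$ and $L_{H'} = \langle[d_1],[e_1]\rangle$ be the Lagrangian subspaces of $H_1(\Sigma;\Z)$ of classes bounding disks in $H$ and $H'$. Since $H_1(S^1\X S^2)\cong\Z$ is free of rank one, $L_H + L_{H'}$ has rank three, so the intersection pairing restricted to $L_H \X L_{H'}$ has rank one. Its matrix in the bases above has first row $(\,[c_1]\cdot[d_1],\,[c_1]\cdot[e_1]\,) = (\pm 1,\,0)$; vanishing of the determinant forces $[c_2]\cdot[e_1] = 0$, and writing a dual of $c_2$ in $H'$ as an integer combination of $[d_1]$ and $[e_1]$ then forces $[c_2]\cdot[d_1] = \pm1$. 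Hence $d_1$ is homologically dual to both $c_1$ and $c_2$; in particular $|c_2\cap d_1|$ is odd, so it is at least one, and if $|c_2\cap d_1| = 1$ then $c = d_1$ works. Moreover, handleslides of $d_1$ over $e_1$ preserve $[c_1]\cdot[d_1] = \pm1$ and $[c_2]\cdot[d_1] = \pm1$, and if the slide arcs are chosen disjoint from $c_1$ they preserve $|c_1\cap d_1| = 1$; so it remains to produce, within the handleslide class of $d_1$, a representative meeting $c_2$ exactly once.

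This last point is the crux, and here is where $X = S^1\X S^2$ enters. I would destabilize the splitting along the canceling pair formed by the disks $D_1, E_1$ bounded by $c_1, d_1$: removing a neighborhood of $D_1\cup E_1$ yields a genus-one Heegaard splitting of $S^1\X S^2$, which is the standard one — its two solid tori have coinciding meridians on the Heegaard torus. The disk $D_2$ survives into this splitting as a meridian disk of one solid torus; tracking its boundary and the curve $e_1$ across the destabilization, and controlling the handleslides over $c_1$ induced by the destabilization via Remark~\ref{rmk:slide}, should show that suitable handleslides of $d_1$ over $e_1$ (missing $c_1$) bring $|c_2\cap d_1|$ down to one. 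Equivalently, one minimizes $|c_2\cap d_1|$ and runs an innermost-arc analysis in the four-holed sphere $\Sigma\setminus(c_1\cup c_2)$, using $e_1$ and the identity $[c_2]\cdot[e_1] = 0$ to remove the excess intersection arcs of $d_1$ with $c_2$; then $c := d_1$ is the required compressing curve. The main obstacle is exactly this geometric reduction: homology only gives $|c_2\cap d_1|\ge 1$, and landing on equality requires the $S^1\X S^2$ hypothesis. The delicate feature is that the reduction must not disturb $c_1$ or $c_2$ — a handleslide of $c_2$ over $c_1$ changes $c_2$ and reverses the parity of $|c_2\cap c|$ for any fixed $c$ — so it must be carried out entirely on the $\{d_1,e_1\}$ side, which is why one invokes the standardness of the genus-one splitting of $S^1\X S^2$ (after destabilizing) rather than a two-sided normalization, and why a short case analysis is needed to hit intersection number one rather than some larger odd number.
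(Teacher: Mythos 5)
This lemma is not proved in the paper at all --- it is imported verbatim from Cho--Koda \cite[Lemma 2.5]{ChoKoda1}, so there is no in-text argument to compare yours against. On its own terms, your write-up gets the preliminaries right: the statement must indeed intend $c$ to be a compressing curve for $H'$ (two compressing curves of the same handlebody have even geometric intersection number, and this reading matches how the lemma is invoked in the proof of Theorem 6.1); the reduction to the case where $\{c_1,c_2\}$ is a cut system is fine; the existence of a cut system $\{d_1,e_1\}$ for $H'$ with $|c_1\cap d_1|=1$ and $c_1\cap e_1=\emptyset$ is the standard consequence of primitivity; and the rank computation on $L_H\times L_{H'}$ correctly yields $[c_2]\cdot[e_1]=0$ and $[c_2]\cdot[d_1]=\pm 1$.

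The problem is that everything after that --- which is the entire content of the lemma --- is asserted rather than proved. Homology gives you $|c_2\cap d_1|$ odd and at least $1$; promoting algebraic duality to geometric duality is exactly the hard part, and both of your proposed routes have concrete holes. The destabilization route fails as described: you want to cancel $D_1$ against the dual disk $E_1$ of $c_1$ and then ``track $D_2$'' into the genus-one splitting, but $c_2=\partial D_2$ meets $d_1=\partial E_1$ in the very $k>1$ points you are trying to eliminate, so $D_2$ is not disjoint from the destabilizing ball and does not descend to a meridian disk of the smaller splitting without first being isotoped off of it --- which is the original problem in disguise. The innermost-arc route is likewise only named, not run: you never say which bigons or outermost arcs exist, why slides of $d_1$ over $e_1$ (with slide arcs avoiding $c_1$, which is itself not automatic) realize the needed reductions, or where the $S^1\times S^2$ hypothesis actually enters the combinatorics; the identity $[c_2]\cdot[e_1]=0$ only tells you the excess intersections cancel algebraically, and cancelling them geometrically is precisely the algebraic-to-geometric jump that needs an argument. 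Cho and Koda's actual proof goes through their structural analysis of primitive and semi-primitive disks in this particular splitting (outermost-arc surgery on intersecting disk systems together with the uniqueness of the non-separating reducing sphere, i.e., the content of their Lemma 2.4, quoted here as Lemma~\ref{lem:unique1}); some input of that kind is unavoidable, and your sketch does not yet supply a substitute for it.
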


\begin{lemma}\cite[Lemma 2.4]{ChoKoda1}\label{lem:unique1}
The genus-two Heegaard surface for $S^1 \X S^2$ has a unique non-separating reducing curve.
\end{lemma}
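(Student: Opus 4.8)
The plan is to fix a convenient model for the genus-two Heegaard splitting $H \cup_\Sigma H'$ of $S^1 \X S^2$ and to show directly that any two non-separating reducing curves are isotopic. By Waldhausen's theorem~\cite{waldhausen} this splitting is the stabilization of the genus-one splitting, so the reducing curve of the genus-one splitting persists as a non-separating reducing curve; fix such a curve $\delta$. Let $c$ be an arbitrary non-separating reducing curve; the goal is to show $c$ is isotopic to $\delta$. The first step is a homological constraint: writing $L_H, L_{H'} \subset H_1(\Sigma)$ for the kernels of the maps induced by the two inclusions, a Mayer--Vietoris computation gives $\text{rk}(L_H \cap L_{H'}) = b_1(S^1 \X S^2) = 1$; since $[\delta]$ is primitive (a non-separating simple closed curve carries a primitive class), $L_H \cap L_{H'} = \Z[\delta]$. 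As $[c] \in L_H \cap L_{H'}$ is likewise primitive, $[c] = \pm[\delta]$, so in particular $|c \cap \delta|$ is even.

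Next I would treat the case $c \cap \delta = \emptyset$, which I expect to be the clean one. Let $D \subset H$ and $D' \subset H'$ be the compressing disks bounded by $\delta$, and reduce the splitting along the non-separating reducing sphere $S_\delta = D \cup_\delta D'$: cutting $H$ and $H'$ along $D$ and $D'$ produces solid tori $V$ and $V'$ with common boundary $\Sigma_\delta$, the torus obtained by compressing $\Sigma$ along $\delta$, carrying the two scars of the compression. Since $S^1 \X S^2$ cut along a non-separating sphere and capped off is $S^3$, the manifold $V \cup_{\Sigma_\delta} V'$ is $S^3$, so the meridians $\mu \subset V$ and $\mu' \subset V'$ satisfy $|\mu \cap \mu'| = 1$ and $\Sigma_\delta$ is irreducible (a reducing curve would be isotopic to both $\mu$ and $\mu'$). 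Now $c$, being disjoint from $\delta$, descends to a curve $\bar c$ in $\Sigma_\delta$ disjoint from the scars; a routine innermost-disk isotopy makes the compressing disks of $c$ disjoint from $D$ and $D'$, so $\bar c$ compresses in both $V$ and $V'$. Irreducibility of $\Sigma_\delta$ forces $\bar c$ to be inessential there; since $\bar c$ misses the scars, the two disks it bounds in $\Sigma_\delta$ cannot both avoid the scars (otherwise $c$ would be inessential in $\Sigma$), so one of them contains exactly one scar, and the corresponding annulus in $\Sigma$ shows that $c$ is isotopic to $\delta$.

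The remaining case, $|c \cap \delta| \geq 2$, is where I expect the real work to be. Choosing compressing disks $P$ for $c$ and $D$ for $\delta$ in $H$ in minimal position, deleting circles of $P \cap D$ by innermost-disk moves (handlebodies are irreducible), and surgering $P$ along outermost arcs of $P \cap D$, one expresses $c$ as an iterated band sum of compressing curves of $H$ that are disjoint from $\delta$; by the torus analysis above, each such summand is inessential, or isotopic to $\delta$, or a meridian of $V$ (so of homology class in $[\mu] + \Z[\delta]$). The symmetric surgery in $H'$ gives a second band-sum description of $c$ with summands inessential, isotopic to $\delta$, or of class in $[\mu'] + \Z[\delta]$. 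The obstacle is to combine these two descriptions with $[c] = \pm[\delta]$ and primitivity of $[c]$ to show that no $\mu$- or $\mu'$-type summand can occur and that a band sum of $\delta$-parallel curves is isotopic to $\delta$ --- contradicting $|c \cap \delta| \geq 2$. This is precisely the sort of bookkeeping effected by wave moves in the Homma--Ochiai--Takahashi analysis of genus-two Heegaard diagrams~\cite{HOT}, and an alternative strategy would be to show at the outset that the reducing spheres $S_c$ and $S_\delta$ can be made disjoint by an ambient isotopy of $S^1 \X S^2$ preserving $\Sigma$, which reduces everything to the disjoint case above.
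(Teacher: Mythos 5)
First, a point of reference: the paper does not prove this statement at all --- it is imported verbatim as Lemma 2.4 of Cho--Koda, so there is no in-paper argument to compare yours against. Judged on its own terms, your proposal has a genuine gap. The homological step is correct ($L_H \cap L_{H'}$ is rank one and generated by the primitive class $[\delta]$, so $[c] = \pm[\delta]$ and $|c \cap \delta|$ is even), and the disjoint case is essentially right, modulo one slip: $\Sigma_\delta$ is a torus, so an inessential $\bar c$ bounds a disk on only \emph{one} side, not two; the correct trichotomy is that this disk contains $0$, $1$, or $2$ scars, forcing $c$ to be inessential, isotopic to $\delta$, or \emph{separating}, respectively --- you need the non-separating hypothesis to kill the third case, which your write-up does not address. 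That is easily repaired.

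The real problem is the case $|c \cap \delta| \geq 2$, which you explicitly leave open, and which is the entire content of the lemma. The outermost-arc surgery does express $c$ as an iterated band sum of compressing curves disjoint from $\delta$, but band sums in a surface are not controlled by homology alone: a band sum of two $\delta$-parallel curves can be separating or otherwise complicated, and a $\mu$-type summand in $H$ paired with a $\mu'$-type summand in $H'$ is not excluded by $[c] = \pm[\delta]$ without tracking the bands. Your proposed alternative --- isotoping $S_c$ and $S_\delta$ to be disjoint by an ambient isotopy preserving $\Sigma$ --- is essentially the statement of the lemma (uniqueness of the Haken sphere up to isotopy), so invoking it is circular. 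This is precisely where Cho and Koda do real work, using their analysis of intersections of essential and primitive disks in the genus-two handlebodies; some such argument (or the Homma--Ochiai--Takahashi wave machinery you allude to, carried out in detail) is required, and the proposal does not supply it.
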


\begin{lemma}\cite[Lemma 1.9]{ChoKoda2}\label{lem:unique2}
The genus-two Heegaard surface for $L(p,q) \# S^1 \X S^2$ has a unique non-separating reducing curve.
\end{lemma}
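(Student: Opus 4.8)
Throughout write $M = L(p,q) \X S^1 \X S^2$ — let me instead write $M = L(p,q)\,\#\,(S^1\X S^2)$ with $p \geq 2$ — and fix the standard genus-two diagram $(\Sigma;\{\A_1,\A_2\},\{\n_1,\n_2\})$ realizing this connected sum, so that $\{\A_1,\n_1\}$ lives on one genus-one summand with $|\A_1\cap\n_1| = p$ (the $L(p,q)$ piece) and $\A_2 = \n_2 =: \delta$ on the other (the $S^1\X S^2$ piece); recall that the genus-two Heegaard surface of $M$ is unique up to isotopy, so $\delta$ is a well-defined non-separating reducing curve, and the claim is that it is the only one up to isotopy. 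The first step is a homological restriction. Inside $H_1(\Sigma) \cong \Z^4$ the Lagrangian subspaces $A = \ker(H_1(\Sigma)\to H_1(H)) = \langle[\A_1],[\delta]\rangle$ and $B = \ker(H_1(\Sigma)\to H_1(H')) = \langle[\n_1],[\delta]\rangle$ satisfy $[\A_1]\cdot[\n_1] = \pm p \neq 0$, which forces $A\cap B = \langle[\delta]\rangle$, a primitive rank-one direct summand. A non-separating reducing curve $c$ bounds disks in both handlebodies, so $[c] \in A\cap B$; being non-separating, $[c]$ is primitive in $H_1(\Sigma)$; hence $[c] = \pm[\delta]$. (This is exactly where $p\neq 0$ is used: for $p = 0$ one gets $\#^2(S^1\X S^2)$, which carries a one-parameter family of non-separating reducing curves.)

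The second step promotes this homological coincidence to an isotopy by reducing the splitting along $c$. Given a non-separating reducing curve $c$ with disks $D\subset H$, $D'\subset H'$, the sphere $S = D\cup D'$ is a non-separating reducing sphere, and cutting the splitting along $c$ (equivalently, splitting $M$ along $S$ and compressing $\Sigma$ along $c$) produces a genus-one Heegaard splitting of a closed manifold $M'$ with $M = M'\,\#\,(S^1\X S^2)$; concretely, $\Sigma$ cut along $c$ and capped off is a genus-one Heegaard torus $T \subset M'$. Uniqueness of prime decomposition together with irreducibility of $L(p,q)$ forces $M' = L(p,q)$. Running the identical construction on $\delta$ yields another genus-one Heegaard torus $T' \subset L(p,q)$. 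By the Bonahon--Otal theorem the genus-one Heegaard torus of $L(p,q)$ is unique up to isotopy, so after an isotopy we may assume $T = T'$, and the original data — the surface $\Sigma$ together with the curve $c$ (respectively $\delta$) — is recovered from $L(p,q)$ by removing two balls, each meeting $T$ in a disk, and gluing in a tube, with $c$ (respectively $\delta$) appearing as a meridian circle of that tube.

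It therefore remains to show that this ``self-tubing'' datum — a pair of disjoint disks on $T$ together with the embedded tube joining them — is unique up to isotopy in $M$, and this is the step I expect to be the main obstacle. A pair of disjoint disks on a torus is unique up to isotopy of the torus, and one must then check that the residual freedom in the tube (the homotopy class of its core arc on $T$ rel the two disks) either is inessential or is absorbed by the Goeritz group of $L(p,q)$, which is finite and explicitly known. This mirrors the argument behind Lemma~\ref{lem:unique1} for $S^1\X S^2$, with irreducibility of $L(p,q)$ and the smallness of its mapping class group and Goeritz group playing the roles played there by the corresponding facts about $S^3$. Once this uniqueness is in hand, the reconstructions of $(\Sigma,\delta)$ and $(\Sigma,c)$ from $L(p,q)$ agree up to isotopy, forcing $c$ to be isotopic to $\delta$.
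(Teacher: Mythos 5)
There is a genuine gap, and you have identified it yourself: the entire content of Lemma~\ref{lem:unique2} is concentrated in the step you defer. Your first step (the Lagrangian computation showing $[c]=\pm[\delta]$ in $H_1(\Sigma)$) is correct but is the easy part: homologous non-separating simple closed curves on a genus-two surface are very far from being isotopic in general, and even two curves that both bound disks on both sides and are homologous need not obviously be isotopic --- that is precisely what must be proved. Your second step (compressing along $c$, invoking uniqueness of prime decomposition to get $M'\cong L(p,q)$, and Bonahon--Otal for the genus-one torus) correctly reduces the question to the uniqueness of the ``tubing datum,'' but that uniqueness is essentially a restatement of the lemma, not a reduction of it: the isotopy class of the pair $(\Sigma,c)$ is governed by the isotopy class of the tube's core arc in the complement of $T$, modulo the symmetries of the genus-\emph{two} splitting of $L(p,q)\#(S^1\X S^2)$ (not the finite mapping class group data of the genus-one splitting of $L(p,q)$ that you invoke), and no argument is given that this freedom is inessential. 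Note also that the lemma is used in the paper in the strong form ``any two non-separating reducing curves are isotopic \emph{in} $\Sigma$,'' so even a successful version of your reconstruction argument would still need its ambient identification to be realized by an isotopy preserving $\Sigma$ setwise.

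For comparison: the paper does not prove this statement at all; it imports it verbatim as Lemma 1.9 of Cho--Koda, whose proof proceeds by a direct disk-complex argument (innermost-disk/outermost-arc surgeries on a pair of non-separating reducing spheres to make them disjoint, followed by an analysis showing disjoint ones coincide), together with their classification of primitive and semi-primitive disks in these handlebodies. That combinatorial surgery argument is the missing ingredient in your proposal; without it, or an explicit computation of the relevant Goeritz-type group action on tubing arcs, the proof is incomplete.
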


The next five lemmas are required to proof Proposition~\ref{prop:dp}, which is the first major step on the way to proving Theorem~\ref{thm:main}.

\begin{lemma}\label{lem:g2thick}
If $\Sigma_i$ is a weakly reducible genus-two thick surface in a generalized Heegaard splitting for a 3-manifold $Y$, then $\Sigma_i$ is reducible.
\end{lemma}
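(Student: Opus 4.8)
The plan is to promote the weak reduction either directly to a reducing curve or to a stabilizing pair, exploiting an Euler-characteristic phenomenon special to genus two; either outcome suffices, since a stabilized genus-$g$ Heegaard surface with $g \ge 2$ is reducible. Set $M_i := C_1^i \cup_{\Sigma_i} C_2^i$, a compact $3$-manifold with a genus-two Heegaard splitting, and let $c_1,c_2$ be disjoint compressing curves for $C_1^i$ and $C_2^i$, bounding disks $D_1 \subset C_1^i$ and $D_2 \subset C_2^i$. First I would reduce to the case that $c_1$ and $c_2$ are non-separating and non-isotopic in $\Sigma_i$: the non-separating reduction is standard, and if $c_1$ is isotopic to $c_2$ then that curve is already a reducing curve and we are done.

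Next comes the main observation. Compressing $\Sigma_i$ along the non-separating curve $c_1$ produces a torus $\Sigma_{i,c_1}$, in which $c_2$ lies disjointly from the two scars. If $c_2$ bounded a disk there, then according to whether that disk contains zero, one, or two scars, $c_2$ would be inessential in $\Sigma_i$, isotopic to $c_1$, or separating in $\Sigma_i$ respectively, and all three are excluded. So $c_2$ is essential in the torus $\Sigma_{i,c_1}$, and hence $\Sigma_{i,c_1,c_2}$ is a $2$-sphere. Concretely, with $P := \Sigma_i \setminus \eta(c_1 \cup c_2)$ a four-holed sphere, capping the two $c_1$-boundary curves of $P$ with parallel pushoffs of $D_1$ into $C_1^i$ and the two $c_2$-boundary curves with parallel pushoffs of $D_2$ into $C_2^i$ realizes this sphere as an embedded $2$-sphere $S \subset M_i$ meeting $\Sigma_i$ exactly in $P$.

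Finally I would split on whether $S$ is essential. If $S$ does not bound a ball in $M_i$, then $M_i$ is reducible, so by Haken's Lemma (the compact analogue of Lemma~\ref{lem:haken}) the splitting $C_1^i \cup_{\Sigma_i} C_2^i$ admits a reducing curve, which bounds disks in $C_1^i$ and $C_2^i$; thus $\Sigma_i$ is reducible. If instead $S$ bounds a ball $B$, I would study the position of $B$ relative to $\Sigma_i$ together with the product slabs in $C_1^i$ and $C_2^i$ cobounded by the parallel pushoffs of $D_1$ and of $D_2$, and expect to conclude that $\Sigma_i$ is stabilized, i.e.\ that some compressing curve for one of the two handlebodies meets a compressing curve for the other exactly once; a reducing curve then appears as the boundary of a regular neighborhood of the union of the corresponding disks, which is a sphere meeting $\Sigma_i$ in a single curve, essential because $g(\Sigma_i) = 2$, and compressing on both sides. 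I expect this last case --- extracting a stabilizing pair from the ball $B$ --- to be the main obstacle; it is where the genus-two hypothesis is genuinely used, since that is what makes $\Sigma_{i,c_1,c_2}$ a sphere rather than a torus, and while I anticipate no new idea is required, it demands a careful bookkeeping of how $B$ sits across $\Sigma_i$ (equivalently, one can appeal to the standard fact that untelescoping a thick surface along a weak reducing pair that compresses it to a sphere exhibits the surface as a stabilization of an adjacent thick surface).
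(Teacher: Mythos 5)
You have the right object in hand but stop one step short, and the step you defer is a genuine gap. Once you have capped the four-holed sphere $P = \Sigma_i \setminus (c_1 \cup c_2)$ with parallel copies of $D_1$ and $D_2$ to form $S$, there is no need to ask whether $S$ is essential: any essential curve $c \subset P$ separating the two $c_1$-boundary components of $P$ from the two $c_2$-boundary components already bounds a compressing disk in $C_1^i$ (the half of $S$ made of a pair of pants in $P$ together with the two copies of $D_1$, pushed slightly into $C_1^i$) and a compressing disk in $C_2^i$ (the other half), and $c$ is essential in $\Sigma_i$ because each of its two sides in $\Sigma_i$ is a once-punctured torus. So $c$ is a reducing curve and the lemma follows at once; this is exactly the paper's short proof, phrased there as choosing a curve on the sphere component of $\Sigma_{i,c_1,c_2}$ that separates the $c_1$-scars from the $c_2$-scars. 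Your case split instead routes the essential case through Haken's Lemma (correct but unnecessary) and leaves the case where $S$ bounds a ball as an expectation. That expectation is not a proof: extracting a stabilizing pair from the ball $B$ requires genuine work, and the ``standard fact'' you invoke in the parenthetical is, in its correct form, precisely the statement that a sphere component of the thin surface forces the thick surface to be reducible --- whose standard proof is the direct argument above. As written, your second case is either circular or incomplete.

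A smaller but real problem is your opening reduction to non-separating $c_1,c_2$. This is standard for handlebodies, but $C_1^i$ and $C_2^i$ are compression bodies, and if, say, $\partial_- C_1^i$ is a disjoint union of two tori, then $C_1^i$ has a unique compressing disk and it is separating (this is Lemma~\ref{lem:comp2} of the paper), so $c_1$ cannot be traded for a non-separating curve. The direct argument is indifferent to this: for any disjoint essential $c_1, c_2$ in a genus-two surface (separating or not, isotopic or not), the compressed surface $\Sigma_{i,c_1,c_2}$ has a sphere component carrying scars of both curves, and a curve on that sphere separating the $c_1$-scars from the $c_2$-scars is the desired reducing curve; in the degenerate configurations it is simply isotopic to $c_1$ or to $c_2$.
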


\begin{proof}
Suppose $\Sigma_i$ is weakly reducible.  Then there exist disjoint compressing curves $c_1$ and $c_2$ in $\Sigma_i$ for $C_1^i$ and $C_2^i$, respectively.  Let $\Sigma_i'$ be the result of compressing $\Sigma_i$ along $c_1$ and $c_2$.  By a standard argument, since the genus of $\Sigma_i$ is two, at least one component of $\Sigma_i'$ is a sphere containing scars coming from both $c_1$ and from $c_2$.  Let $c$ be a curve separating in this component of $\Sigma_i'$ separating the scars coming from $c_1$ from the scars coming from $c_2$.  Then $c$ bounds a disk in $\Sigma_i'$ containing only scars coming from $c_1$, which implies that $c$ bounds a compressing disk in $C_1^i$.  Similarly, $c$ bounds a disk in $\Sigma_i'$ containing only scars coming from $c_2$, so $c$ bounds a compressing disk in $C_2^i$.  We conclude that $c$ is a reducing curve for $\Sigma_i$.
\end{proof}

\begin{lemma}\label{lem:comp1}
Suppose that $C$ is a compression-body such that $g(\pd_+ C) = 2$ and $g(\pd_- C) = 1$.  Then $C$ has a unique non-separating compressing disk up to isotopy.
\end{lemma}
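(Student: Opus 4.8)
The claim is that a compression-body $C$ with $g(\pd_+ C) = 2$ and $g(\pd_- C) = 1$ has, up to isotopy, a unique non-separating compressing disk. Such a $C$ is obtained from $\pd_+ C \times I$ by attaching a single 2-handle to $\pd_+ C \times \{0\}$ along a non-separating curve $a$ (the handle is a single one because the Euler characteristic drops by exactly $2$, i.e. one 2-handle, and no 2-sphere boundary components appear since $g(\pd_- C) = 1$). So $C$ deformation retracts onto $\pd_+ C \times I$ with a 2-handle, and the compressing disks for $C$ are precisely (up to isotopy) the disks whose boundaries are curves on $\Sigma := \pd_+ C$ that bound disks in $C$. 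The plan is to identify exactly which curves on $\Sigma$ bound compressing disks in $C$ and to show that, among those, exactly one is non-separating.

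**Main steps.** First, I would fix a standard model: write $\Sigma$ as a genus-two surface and let $a \subset \Sigma$ be the non-separating curve along which the 2-handle is attached, so $\pd_- C = \Sigma_a$ (a torus). A curve $c \subset \Sigma$ is a compressing curve for $C$ if and only if $c$ becomes null-homotopic in $C$; since $C$ is homotopy equivalent to $\Sigma$ with a 2-cell attached along $a$, this happens if and only if $c$ is conjugate to a power of $a$ in $\pi_1(\Sigma)$ — but more usefully, by an innermost-disk / outermost-arc argument, $c$ bounds a disk in $C$ if and only if $c$ can be isotoped in $\Sigma$ to be disjoint from $a$ and then becomes inessential or parallel to $a$ after compressing along $a$. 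Concretely: the compressing disks for $C$ are $a$ itself together with the curves obtained from $a$ by ``band sums'' over $\pd_- C \times I$, i.e. curves $c$ with $c \cap a = \emptyset$ that are isotopic to $a$ in $\Sigma_a$. Second, I would analyze the non-separating case. If $c$ is a non-separating compressing curve, then $c$ is disjoint from $a$ (after isotopy) and $\Sigma \setminus (a \cup c)$ is planar on the $\pd_-$ side; cutting along $a$ turns $\Sigma$ into the torus $\Sigma_a$, and $c$ descends to a curve on $\Sigma_a$ that bounds a disk in the product region $\Sigma_a \times I$ only if it is inessential on $\Sigma_a$ — but $c$ is non-separating on $\Sigma$, which forces $c$ to be either isotopic to $a$ in $\Sigma$ or to cobound (with $a$) a pair of pants in $\Sigma$ whose third boundary is inessential. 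In the latter case $c$ is isotopic to $a$ in $\Sigma$ anyway. Hence every non-separating compressing curve is isotopic to $a$ in $\Sigma$, giving uniqueness of the disk.

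**The main obstacle.** The delicate point is the ``only if'' direction in the characterization of compressing curves: showing that a non-separating compressing curve $c$ for $C$ must, after isotopy, be disjoint from $a$ and then be isotopic to $a$ in $\Sigma$, rather than merely homologous to a multiple of $[a]$. This requires a standard but careful argument: take a compressing disk $D$ for $C$ with $\pd D = c$, put $D$ in general position with respect to the cocore of the 2-handle, and run an innermost-disk argument on the circles of intersection and an outermost-arc argument on the arcs, together with the observation that in the product $\pd_- C \times I$ every compressing disk is vertical (a classical fact for compression-bodies — no compressing disk of $C$ can be disjoint from the cocore unless its boundary is inessential in $\Sigma$, since $\pd_- C \times I$ has no compressing disks for $\pd_+$). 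The bookkeeping showing that the resulting curve on $\Sigma_a$ is inessential — and therefore that $c$ was isotopic to $a$ — is where all the work is, but it is routine given Lemma~\ref{lem:haken}-style techniques. Once the characterization is in hand, the conclusion that there is a unique non-separating compressing disk is immediate, since it must be (the isotopy class of) $a$, the belt sphere of the unique 2-handle.
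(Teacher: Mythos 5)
Your overall skeleton matches the paper's: the non-separating disk is the cocore $D$ of the single $2$-handle, the complement $C\setminus D$ is $T^2\times I$ with incompressible boundary, and a compressing disk disjoint from $D$ is classified by counting which scars of $D$ its boundary encloses (one scar forces it to be isotopic to $D$; zero or two scars force it to be boundary-parallel or separating). That part of your argument is essentially sound, although your parenthetical claim that a compressing disk of $C$ disjoint from the cocore must have inessential boundary in $\pd_+ C$ is false: the separating disks obtained by band-summing two parallel copies of $D$ are disjoint from $D$ and have essential boundary, which is exactly why the scar count is needed.

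The genuine gap is the case $D\cap D'\neq\emptyset$, which you defer to ``routine bookkeeping'' via innermost-disk and outermost-arc arguments. Outermost-arc surgery replaces $D'$ by two disks $D_1'$, $D_2'$ meeting $D$ fewer times; it does not by itself isotope $D'$ off $D$, and a contradiction still has to be extracted. The paper does this in three steps your sketch omits: (i) since $[\pd D'] = [\pd D_1'] + [\pd D_2']$ in $H_1(\pd_+ C)$ and $D'$ is non-separating, at least one of the new disks, say $D_1'$, is non-separating, and minimality of $|D\cap D'|$ then forces $D_1'$ to be isotopic to $D$; (ii) hence $\pd D_2'$ is null-homologous, so it is inessential or separating; (iii) $D'$ is recovered by boundary-tubing $D_1'$ to $D_2'$, and tubing a boundary-parallel or separating disk to a disk isotopic to $D$ yields a disk isotopic to $D$, contradicting the choice of $D'$. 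Step (iii) in particular is a specific geometric observation, not generic bookkeeping, and Haken's Lemma (Lemma~\ref{lem:haken}) does not supply it. Without some version of this reconstruction argument, your proof only covers disks that are already disjoint from the cocore.
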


\begin{proof}
Note that $C$ has at least one non-separating compressing disk $D$, and suppose by way of contradiction that $D'$ is another non-separating compressing disk which is not isotopic to $D$.  If $D \cap D' = \emp$, then $D' \subset C \setminus D$, where $C \setminus D$ is diffeomorphic to $T^2 \X I$.  As $\pd(T^2 \X I)$ is incompressible, it follows that $D'$ is isotopic to a disk $D^* \subset \pd (T^2 \X I)$.  Let $D_1$ and $D_2$ in $\pd(T^2 \X I)$ denote the two scars of cutting $C$ along $D$.  If $D^*$ contains neither scar, then $D'$ is boundary-parallel in $C$, violating our assumption that $\pd D$ is essential in $\pd_+ C$, and if $D^*$ contains both scars, then $D'$ is separating in $C$, again violating our hypotheses.  It follows that $D^*$ contains a single scar, which implies that $D$ and $D'$ are isotopic, a contradiction.

Now, suppose that $D \cap D' \neq \emp$, and suppose that $D'$ intersects $D$ minimally among all non-separating disks not isotopic to $D$.  Using a standard cut-and-paste argument and the irreducibility of $C$, we can assume that $D \cap D'$ contains only arcs of intersection.  Let $\A$ be an arc of intersection that is outermost in $D$, so that $\A$ co-bounds a disk $\Delta \subset D$ with an arc in $\pd_+ C$ and such that $\Delta$ does not intersect $D'$ in its interior.  Construct two new disks $D_1'$ and $D_2'$ by gluing a copy of $\Delta$ to each component of $D' \setminus \A$.  Then $D_1'$ and $D_2'$ are embedded disks with boundary in $\pd_+ C$, and since $[\pd D'] = [\pd D_1'] + [\pd D_2']$ in $H_1(\pd_+ C)$, at least one of $D_1'$ or $D_2'$ is a non-separating compressing disk.  Note that by construction, we have $D_1' \cap D_2' = \emp$, and $|D \cap D_i'| < |D \cap D'|$ for both $i=1$ and 2.

Suppose without loss of generality that $D_1'$ is non-separating.  Since $|D \cap D_1'| < |D \cap D'|$, we have by minimality of $D'$ that $D_1'$ and $D$ are isotopic.  Moreover, $[\pd D'] = [\pd D_1'] + [\pd D_2']$ in $H_1(\pd_+ C)$ implies that $[\pd D_2'] = 0$, so that $\pd D_2'$ is a separating curve.  If $\pd D_2'$ is inessential in $\pd_+ C$, then we can construct an isotopy from $D'$ to $D_1'$, implying $D'$ is isotopic to $D$ as well, a contradiction.  Otherwise, $\pd D_2'$ is separating, so that $C \setminus D_2'$ is the union of $T^2 \X I$ and a solid torus $V$ in which $D_1'$ is a compressing disk.  Now, we can reconstruct $D'$ by boundary-tubing $D_1'$ and $D_2'$ in $C$.  However, the result of such a boundary-tubing the separating curve $D_2'$ to the non-separating curve $D_1'$ in $C$ yields a disk isotopic to $D_1'$.  It follows that $D'$ is isotopic to $D_1'$ and thus to $D$, another contradiction.  We conclude that no such disk $D'$ exists.
\end{proof}

\begin{lemma}\label{lem:comp2}
Suppose that $C$ is a compression-body such that $g(\pd_+ C) = 2$ and $\pd_- C$ is a disjoint union of two tori.  Then $C$ has a unique compressing disk up to isotopy.
\end{lemma}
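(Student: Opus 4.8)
The plan is to mirror the structure of the proof of Lemma~\ref{lem:comp1}. First I would pin down the topology of $C$. Since $g(\pd_+ C) = 2$ and $\pd_- C = T^2 \sqcup T^2$, the compression-body $C$ is not a product and hence has at least one compressing disk $D$. I claim every compressing disk $D$ of $C$ is \emph{separating} in $\Sigma = \pd_+ C$: if $\pd D$ were non-separating, then $C \setminus D$ would be a connected compression-body whose $\pd_+$ is a torus and whose $\pd_-$ is a disjoint union of two tori, which is impossible for genus reasons (one cannot build a connected genus-one surface from two tori by attaching $1$-handles). Cutting $\Sigma$ along the separating curve $\pd D$ yields two once-punctured tori $P_1, P_2$ (a once-punctured sphere would make $\pd D$ inessential, so the punctured pieces have genus $1$ each), and an Euler characteristic count, together with the fact that each piece must receive exactly one torus of $\pd_- C$, shows $C \setminus D = C_1 \sqcup C_2$ with each $C_i$ a genus-one compression-body satisfying $\pd_+ C_i = P_i \cup \hat{D}_i \cong T^2$ (where $\hat D_i$ is the scar of the compression) and $\pd_- C_i \cong T^2$; hence $C_i \cong T^2 \X I$. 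So cutting $C$ along any compressing disk produces two copies of $T^2 \X I$.

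Now fix such a $D$ and let $D'$ be any compressing disk. If $D \cap D' = \emptyset$, then $D' \subset C_i \cong T^2 \X I$ for some $i$, say $i = 1$, with $\pd D' \subset P_1 \subset \pd_+ C_1$. Since $\pd_+(T^2 \X I)$ is incompressible, $\pd D'$ bounds a disk in $\pd_+ C_1 \cong T^2$; as $\pd D'$ is essential in $\Sigma$ it is neither inessential nor non-boundary-parallel in $P_1$, so it must be isotopic in $P_1 \subset \Sigma$ to $\pd P_1 = \pd D$. Since a compressing disk is determined up to isotopy by its boundary curve, $D'$ is isotopic to $D$. For the case $D \cap D' \neq \emptyset$, I would isotope $D'$ within its isotopy class to minimize $|D \cap D'|$ and suppose this number is positive; using irreducibility of $C$, remove all circle components of $D \cap D'$ by the usual innermost-disk swaps, leaving only arcs, each properly embedded in both disks. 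Let $\alpha$ be an arc outermost in $D'$, cutting off a half-disk $\Delta' \subset D'$ with $\mathrm{int}(\Delta')$ disjoint from $D$; then $\Delta'$ is a properly embedded disk in one component $C_1 \cong T^2 \X I$ of $C \setminus D$, and $\pd \Delta' \subset \pd_+ C_1$ (the shadow arc of $\alpha$ on $\pd D'$ cannot cross $\pd D$ without forcing part of $\Delta'$ into the other component). By incompressibility of $\pd_+(T^2 \X I)$, $\pd \Delta'$ bounds a disk $E$ in $\pd_+ C_1$, and $\Delta' \cup E$ is a sphere in the irreducible manifold $C_1$, hence bounds a ball; a standard cut-and-paste argument then uses this ball to isotope $D'$ so as to reduce $|D \cap D'|$, a contradiction. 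Therefore $D \cap D' = \emptyset$, and the previous case gives $D' \sim D$, establishing uniqueness.

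The main obstacle is the intersecting case. In Lemma~\ref{lem:comp1} it is resolved by band-summing $D'$ along an outermost subdisk of $D$ and exploiting the fact that one of the two resulting disks is necessarily non-separating, which by minimality must be isotopic to $D$. That homological shortcut is unavailable here: since the attaching curve of the unique $2$-handle of $C$ is separating, the map $H_1(\pd_+ C) \to H_1(C)$ is an isomorphism and every compressing curve of $C$ is separating. The substitute, carried out above, is the observation that any outermost half-disk of $D'$ lies inside one of the product pieces $C_i \cong T^2 \X I$ with boundary on its incompressible side, hence is trivial there; the only delicate point is the routine bookkeeping needed to convert the resulting ball into a genuine isotopy of $D'$ lowering the geometric intersection number, exactly as in the classical arguments underlying Lemma~\ref{lem:haken} and the proof of Lemma~\ref{lem:comp1}.
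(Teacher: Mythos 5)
Your proposal is correct in substance, and your treatment of the disjoint case is essentially the paper's argument (with the useful extra observation, only implicit in the paper, that every compressing disk of $C$ is separating, so that $C \setminus D$ is always two copies of $T^2 \X I$). In the intersecting case you take a genuinely different route. The paper picks an arc of $D \cap D'$ outermost in $D$, surgers $D'$ along the resulting half-disk to obtain disjoint embedded disks $D_1'$ and $D_2'$ each meeting $D$ fewer times, rules out boundary-parallel outcomes by minimality, concludes both are parallel to $D$, and gets a contradiction because $D'$ is recovered by boundary-tubing two parallel separating disks, which is boundary-parallel. You instead take the arc outermost in $D'$, note that the half-disk $\Delta'$ it cuts off is properly embedded in one component $C_1 \cong T^2 \X I$ of $C \setminus D$, and use incompressibility and irreducibility of $T^2 \X I$ to conclude that $\Delta'$ is boundary-parallel onto a disk $E \subset \pd_+ C_1$, intending to cancel the arc by pushing across the parallelism ball. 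This is more geometric, exploits the same structural fact that powers the disjoint case, and avoids the tubing analysis entirely.

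The one step you should not dismiss as routine bookkeeping is converting the ball $B$ with $\pd B = \Delta' \cup E$ into an isotopy of $D'$ that lowers $|D \cap D'|$. A component $F$ of $D' \setminus D$ lying in $C_1$ other than $\Delta'$ is disjoint from $\Delta'$ (the sheet of $D'$ adjacent to $\Delta'$ across the outermost arc lies on the other side of $D$), so either $F \cap B = \emp$ or $F$ is trapped inside $B$ with $\pd F \subset E$; in the latter case pushing $\Delta'$ across $B$ sweeps through other sheets of $D'$ and is not an isotopy of the embedded disk. This nesting phenomenon can genuinely occur and is exactly what the paper's surgery-based argument is built to sidestep. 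It is repairable --- for instance, choose the outermost arc so that the associated parallelism region is innermost among all such regions, or abandon the direct isotopy and fall back on the disk-surgery argument of Lemma~\ref{lem:comp1} and the paper's proof --- but it needs an explicit sentence rather than an appeal to classical cut-and-paste.
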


\begin{proof}
As in the proof of Lemma~\ref{lem:comp1}, we note that $C$ has one (separating) compressing disk $D$, and we suppose by way of contradiction that $D'$ is another compressing disk which is not isotopic to $D$.  If $D \cap D' = \emp$, then $D' \subset C \setminus D$, where $C \setminus D$ is diffeomorphic to two disjoint copies of $T^2 \X I$.  As $\pd(T^2 \X I)$ is incompressible, it follows that $D'$ is isotopic to a disk $D^* \subset \pd (T^2 \X I)$.  Let $D_1$ and $D_2$ denote the two scars of cutting $C$ along $D$, where one scar is in each copy of $\pd (T^2 \X I)$.  If $D^*$ contains neither scar, then $D'$ is boundary-parallel in $C$, violating our assumption that $\pd D$ is essential in $\pd_+ C$.  Otherwise, $D^*$ contains one of the scars, implying $D$ and $d'$ are isotopic, a contradiction.

Now, suppose that $D \cap D' \neq \emp$ with $D'$ intersecting $D$ minimally among all compressing disks not isotopic to $D$.  As above, we can assume that $D \cap D'$ contains only arcs of intersection, and if $\A$ is an arc of intersection that is outermost in $D$, we can use $\A$ to construct two new disks $D_1'$ and $D_2'$ such that $D_1' \cap D_2' = \emp$, and $|D \cap D_i'| < |D \cap D'|$ for both $i=1$ and 2.  If $D_1'$ is a boundary parallel disk, then we can use this parallelism to construct an isotopy from $D_2'$ to $D'$, contradicting our assumption that $D'$ intersects $D$ minimally.  Similarly, $D_2'$ cannot be boundary parallel, and thus the only possibility is that both $D_1'$ and $D_2'$ are isotopic to $D$ (and thus to each other).  However, as above, $D'$ is the result of such a boundary-tubing $D_1'$ and $D_2'$ in $C$, but boundary tubing two parallel separating disks produces a boundary-parallel disk, another contradiction.  We conclude that no such disk $D'$ exists.
\end{proof}

\begin{lemma}\label{lem:weak1}
Suppose $H_1 \cup_{\Sigma} H_2$ is a genus-three Heegaard splitting of $Y = S^3$ or $S^1 \X S^2$, and let $c_1$ and $c_2$ be a weak-reducing pair of mutually non-separating curves.  Then one of the following holds:
\begin{enumerate}
\item There exists a compressing curve $c_2'$ for $H_2$ such that $|c_2' \cap c_1| = 1$ and $c_2' \cap c_2 = \emp$,
\item There exists a compressing curve $c_1'$ for $H_1$ such that $|c_1' \cap c_2| = 1$ and $c_1' \cap c_1 = \emp$,
\item $c_1$ is a reducing curve, or
\item $c_2$ is a reducing curve.
\end{enumerate}
\end{lemma}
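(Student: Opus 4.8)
The idea is to untelescope the weak reduction and exploit the fact that the resulting thin torus cannot be incompressible in $S^3$ or $S^1 \X S^2$.

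First I would untelescope $(c_1,c_2)$ into the generalized Heegaard splitting $(\Sigma_{c_1},T,\Sigma_{c_2})$ of $Y$, with $T=\Sigma_{c_1,c_2}$. Since $c_1$ and $c_2$ are mutually non-separating, $\Sigma\setminus(c_1\cup c_2)$ is connected, so $T$ is a single torus; in particular $c_2$ remains non-separating in the genus-two surface $\Sigma_{c_1}$, and symmetrically $c_1$ remains non-separating in $\Sigma_{c_2}$. Tracking the untelescoping move (most transparently, by rearranging a self-indexing Morse function so that the $2$-handle dual to $c_2$ is slid below the $1$-handle dual to $c_1$, which is possible precisely because $c_1\cap c_2=\emptyset$), one identifies the pieces: writing $D_i$ for the disk bounded by $c_i$, the compression-body $C_1^1$ is the genus-two handlebody $H_1\setminus\eta(D_1)$, the compression-body $C_2^2$ is the genus-two handlebody $H_2\setminus\eta(D_2)$, the compression-body $C_2^1$ is obtained by attaching a single $2$-handle to $\Sigma_{c_1}$ along $c_2$, and $C_1^2$ by attaching a single $2$-handle to $\Sigma_{c_2}$ along $c_1$. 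I record two consequences. First, by Lemma~\ref{lem:comp1}, together with the observation that a compression-body built from a single $2$-handle has no separating compressing disk (cut-and-paste such a disk to be disjoint from the cocore of the dual $1$-handle; it then lies in a product region and is boundary-parallel), $c_2$ is the \emph{unique} compressing curve of $C_2^1$, and $c_1$ is the unique compressing curve of $C_1^2$. Second, a curve on $\Sigma_{c_1}$ disjoint from the scars of the $c_1$-compression bounds a disk in $C_1^1$ if and only if it bounds a disk in $H_1$: any disk in $H_1$ bounded by a curve disjoint from $c_1$ can be isotoped off $D_1$ by the usual cut-and-paste in the irreducible handlebody $H_1$. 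The analogous statement holds for $C_2^2\subset H_2$.

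Now $T$ is an embedded torus in $Y\in\{S^3,S^1\X S^2\}$, hence compressible: otherwise $\pi_1(T)\cong\Z^2$ would inject into $\pi_1(Y)$, which is trivial or infinite cyclic. By Proposition~\ref{prop:thin}, at least one of the genus-two thick surfaces $\Sigma_{c_1}$, $\Sigma_{c_2}$ is weakly reducible. Suppose $\Sigma_{c_1}$ is. By Lemma~\ref{lem:g2thick} it is reducible, so it carries a reducing curve $c$, i.e.\ one bounding compressing disks in both $C_1^1$ and $C_2^1$. Since $c$ bounds a disk in $C_2^1$, the first consequence above forces $c\simeq c_2$; since $c\simeq c_2$ also bounds a disk in $C_1^1$, the second consequence gives that $c_2$ bounds a disk in $H_1$. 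As $c_2$ bounds a disk in $H_2$ by hypothesis, $c_2$ is a reducing curve for $\Sigma$, i.e.\ conclusion (4). If instead $\Sigma_{c_2}$ is weakly reducible, the same argument with $C_1^2,C_2^2$ replacing $C_2^1,C_1^1$ gives conclusion (3). This already establishes the stronger dichotomy that (3) or (4) always holds, which implies the stated alternative; conclusions (1)--(2) are retained in the statement because an argument avoiding Lemma~\ref{lem:g2thick}, one dissecting the weak reduction of the genus-two thick surface directly, naturally produces first a compressing curve meeting $c_1$ or $c_2$ exactly once.

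The main obstacle is the first paragraph: verifying precisely that untelescoping yields $C_1^1=H_1\setminus\eta(D_1)$ and $C_2^1$ as a single-$2$-handle compression-body along $c_2$, and then pinning down the compressing curves of the resulting compression-bodies via Lemma~\ref{lem:comp1}. It is here that the \emph{mutually non-separating} hypothesis is essential: it is what keeps $c_2$ non-separating in $\Sigma_{c_1}$, so that $\partial_- C_2^1$ is a single torus and Lemma~\ref{lem:comp1} applies (in the mutually separating case the thin surface is two tori and the compression-body analysis changes). Once the structural claims are in place, the remainder is formal bookkeeping with Proposition~\ref{prop:thin} and Lemma~\ref{lem:g2thick}.
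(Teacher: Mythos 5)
Your setup (untelescoping, compressibility of the thin torus, Proposition~\ref{prop:thin}, Lemma~\ref{lem:g2thick}) matches the paper, but the argument breaks at the claim that $C_2^1$ --- the compression-body obtained by attaching a single $2$-handle to $\Sigma_{c_1}\times I$ along $c_2$ --- has \emph{no} separating compressing disk, so that $c_2$ is its unique compressing curve. This is false: the boundary band-sum of the cocore disk $D_2$ with a parallel copy of itself along an arc $\alpha\subset\Sigma_{c_1}$ meeting $c_2$ from opposite sides is an embedded \emph{separating} compressing disk, whose boundary $\partial N(c_2\cup\alpha)$ cuts off a once-punctured torus containing $c_2$ and is therefore essential. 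Your cut-and-paste sketch does not rule this out: minimizing intersection with $D_2$ and surgering an outermost arc produces two disks whose boundary classes sum to zero, and the case where both are isotopic to $D_2$ reassembles exactly such a band-sum, which is not boundary-parallel. Lemma~\ref{lem:comp1} only asserts uniqueness of the \emph{non-separating} disk. Indeed, the reducing curve $c^*$ produced by Lemma~\ref{lem:g2thick} is separating in $\Sigma_{c_1}$ (it separates the $c_1$-scars from the $c_2$-scars), so it cannot be isotopic to the non-separating curve $c_2$; your deduction ``$c\simeq c_2$'' is internally inconsistent with your own (correct) observation that $c_2$ stays non-separating in $\Sigma_{c_1}$.

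The resulting ``stronger dichotomy'' that (3) or (4) always holds is not merely unproven but false, and cannot be repaired: fed into Proposition~\ref{prop:dp}, it would force every weakly reducible genus-three trisection to be reducible, contradicting the irreducible weakly reducible trisections of $S_p$ for $p\neq 0,1$ (e.g.\ Figure~\ref{fig:spunrp2}); alternatives (1)--(2) are genuinely realized and are what feed the five-chain construction. What the paper actually does at this juncture is apply Lemma~\ref{lem:comp1} to conclude only that $c_2$ is isotopic to the meridian of the solid torus cut off by the separating disk $D^*$, hence disjoinable from $c^*$; it then runs a handleslide argument (slides of $c^*$ over $c_1$ and over the scars, via Remark~\ref{rmk:slide}) to produce a separating reducing curve $c^{***}$ for $\Sigma$ cutting off a genus-one summand $Y^*$ containing $c_2$, and the dichotomy $Y^*=S^3$ versus $Y^*=S^1\times S^2$ is precisely what yields conclusion (2) versus (4). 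You would need to replace your second paragraph with this analysis of the separating reducing curve.
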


\begin{proof}
Untelescoping $\Sigma$ along the weak-reducing pair $c_1$ and $c_2$ yields a generalized Heegaard splitting $(\Sigma_{c_1}, \Sigma_{c_1,c_2}, \Sigma_{c_2})$, where
\[ Y = H'_1 \cup_{\Sigma_{c_1}} C_1 \cup_{\Sigma_{c_1,c_2}} C_2 \cup_{\Sigma_{c_2}} H'_2.\]
Let $D_i$ be a disk bounded by $c_i$ in $H_i$, so that $H_1' = H_1 \setminus D_1$ and $H_2' = H_2 \setminus D_2$.

Since $c_1$ and $c_2$ are mutually non-separating, the thin surface $\Sigma_{c_1,c_2}$ is a torus, and so it compresses in $Y$, as neither $S^3$ nor $S^1 \X S^2$ contains an incompressible torus.  By Proposition~\ref{prop:thin}, at least one of the surfaces $\Sigma_{c_1}$ or $\Sigma_{c_2}$ is weakly reducible.  Moreover, $g(\Sigma_{c_1}) = g(\Sigma_{c_2}) = 2$, and so by Lemma~\ref{lem:g2thick}, one of the thick surfaces is reducible.  Suppose first that $\Sigma_{c_1}$ is reducible, and let $c^*$ be a separating reducing curve.  Let $D^*$ denote the disk bounded by $c^*$ in $C_1$, so that $C_1 \setminus D^*$ is the union of a solid torus $V$ and a copy of $T^2 \X I$.  Now, both the meridian of $V$ and the curve $c_2$, viewed as a curve in $\Sigma_{c_1}$, bound a non-separating compressing disks for $C_1$.  By Lemma~\ref{lem:comp1}, such a disk is unique, and so it follows that $c_2$ is homotopic in $\Sigma_{c_1}$ to a curve disjoint from $c^*$.

Fixing $c_2$, there is a homotopy of $c^*$ in $\Sigma_{c_1}$ taking $c^*$ to a curve $c^{**}$ disjoint from $c_2$.  Following Remark~\ref{rmk:slide}, we have that there is a sequence of handleslides of $c^*$ over $c_1$ in our original surface $\Sigma$ yielding $c^{**}$.  See Figure~\ref{fig:weak}.  We note also that since $c^*$ bounds a disk in $H_1' = H_1 \setminus D_1$, we have $c^*$ (viewed as a curve in $\Sigma$) bounds a disk in $H_1$ as well; hence, the curve $c^{**}$ also bounds a disk in $H_1$.  Since $c^{**}$ is separating in $\Sigma_{c_1}$, it cuts off a once-punctured torus $T$ containing $c_2$, which a priori may contain one or both scars from $c_1$.  After additional slides of $c^{**}$ over these scars if necessary, we obtain a curve $c^{***}$ cutting off a punctured torus $T^*$ containing $c_2$ and no scars, so that $T^*$ is a subsurface of $\Sigma$ as well.  Since $c_2 \subset T^*$, it follows that $c^{***}$ bounds a disk in $H_2$, and thus $c^{***}$ is a reducing curve for $\Sigma$ cutting off a genus-one summand $Y^*$ of $Y$.  If $Y^* = S^3$, then conclusion (2) holds; otherwise, $Y^* = S^1 \X S^2$ and conclusion (4) holds.

The other possibility is that $\Sigma_{c_2}$ is reducible, in which case a parallel argument shows that (1) or (3) holds, completing the proof.
\end{proof}

\begin{figure}[h!]
  \centering
  \includegraphics[width=.4\linewidth]{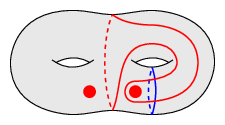} \qquad
    \includegraphics[width=.4\linewidth]{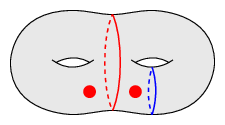}
    \put (-293,0) {\textcolor{red}{$c^*$}}
\put (-260,0) {\textcolor{blue}{$c_2$}}
    \put (-93,0) {\textcolor{red}{$c^{**}$}}
\put (-60,0) {\textcolor{blue}{$c_2$}}\\
  \includegraphics[width=.4\linewidth]{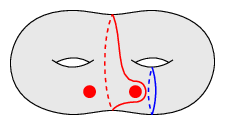}
      \put (-93,0) {\textcolor{red}{$c^{***}$}}
\put (-60,0) {\textcolor{blue}{$c_2$}}
  \caption{At top left, curves $c^*$ and $c_2$ are homotopic to disjoint curves in $\Sigma_{c_1}$ (shown with scars).  At top right, slides of $c^*$ over $c_1$ in $\Sigma$ (homotopies over scars in $\Sigma_{c_1}$) yield $c^{**}$ disjoint from $c_2$ in $\Sigma_{c_1}$.  At bottom, further slides of $c^{**}$ over $c_1$ yield a reducing curve $c^{***}$ for $\Sigma$.}
	\label{fig:weak}
\end{figure}

\begin{lemma}\label{lem:weak2}
Suppose $H_1 \cup_{\Sigma} H_2$ is a genus-three Heegaard splitting of $Y = S^3$ or $S^1 \X S^2$, and let $c_1$ and $c_2$ be a weak-reducing pair of non-separating but mutually separating curves.  Then either $c_1$ is a reducing curve or $c_2$ is a reducing curve.
\end{lemma}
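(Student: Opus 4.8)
The plan is to follow the outline of the proof of Lemma~\ref{lem:weak1}, but substitute the stronger uniqueness statement of Lemma~\ref{lem:comp2} for Lemma~\ref{lem:comp1}; this is what makes the mutually-separating case simpler, since the relevant compressing disk turns out to be the one bounded by $c_2$ itself. First I would untelescope $\Sigma$ along the weak-reducing pair $c_1$ and $c_2$ to obtain a generalized Heegaard splitting $(\Sigma_{c_1},\Sigma_{c_1,c_2},\Sigma_{c_2})$, writing
\[ Y = H_1' \cup_{\Sigma_{c_1}} C_1 \cup_{\Sigma_{c_1,c_2}} C_2 \cup_{\Sigma_{c_2}} H_2', \]
where $H_i' = H_i \setminus D_i$ for a disk $D_i \subset H_i$ bounded by $c_i$. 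Since $c_1$ and $c_2$ are mutually separating, the thin surface $\Sigma_{c_1,c_2}$ is a disjoint union of two tori, so $\pd_- C_1 = \pd_- C_2 = \Sigma_{c_1,c_2}$ is two tori, and $C_1$ is obtained from $\Sigma_{c_1} \X I$ by attaching a $2$-handle along $c_2$ (and symmetrically $C_2$ is obtained from $\Sigma_{c_2} \X I$ by attaching a $2$-handle along $c_1$). In particular $c_2$ bounds a compressing disk for $C_1$ and $c_1$ bounds a compressing disk for $C_2$.

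Next, since $Y = S^3$ or $S^1 \X S^2$ contains no incompressible torus, the thin surface $\Sigma_{c_1,c_2}$ compresses in $Y$, so by Proposition~\ref{prop:thin} at least one of the genus-two thick surfaces $\Sigma_{c_1}$, $\Sigma_{c_2}$ is weakly reducible, hence reducible by Lemma~\ref{lem:g2thick}. Suppose without loss of generality that $\Sigma_{c_1}$ is reducible, and let $c^*$ be a reducing curve for it, so $c^*$ bounds a compressing disk in $C_1$ and a compressing disk in $H_1'$. By Lemma~\ref{lem:comp2}, $C_1$ has a unique compressing disk up to isotopy, and as noted above this disk is bounded by $c_2$; therefore $c^*$ is isotopic to $c_2$ in $\Sigma_{c_1}$. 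Consequently $c_2$, viewed as a curve in $\Sigma_{c_1}$, bounds a compressing disk $D$ in $H_1'$. Since $c_2$ is disjoint from the scars of the compression, $D$ sits inside $H_1' \subset H_1$ as a properly embedded disk with essential boundary $c_2 \subset \Sigma = \pd H_1$, so $c_2$ is a compressing curve for $H_1$ as well. As $c_2$ already bounds a disk in $H_2$ and is non-separating in $\Sigma$, it is a (non-separating) reducing curve for $\Sigma$. If instead $\Sigma_{c_2}$ is reducible, the symmetric argument (using that the unique compressing disk of $C_2$ is bounded by $c_1$) shows that $c_1$ is a reducing curve, completing the proof.

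I expect the only delicate point to be the passage from $\Sigma_{c_1}$ back to $\Sigma$: one must check that the isotopy between $c^*$ and $c_2$ in $\Sigma_{c_1}$ really transports the compressing disk for $H_1'$ to one with boundary $c_2$, and that a compressing disk for $H_1'$ whose boundary misses the scars is genuinely a compressing disk for $H_1$. In contrast to the proof of Lemma~\ref{lem:weak1}, no handleslides over $c_1$ (homotopies across scars, cf. Remark~\ref{rmk:slide}) are required here, precisely because the unique compressing disk of $C_1$ is already bounded by the curve $c_2$, which lives in $\Sigma$ disjoint from the scars; this is the feature that makes the mutually separating case strictly easier than the mutually non-separating one.
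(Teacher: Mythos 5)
Your proposal is correct and follows essentially the same route as the paper: untelescope along the pair, use Proposition~\ref{prop:thin} and Lemma~\ref{lem:g2thick} to find a reducible thick surface, and apply the uniqueness statement of Lemma~\ref{lem:comp2} to identify the reducing curve with $c_2$ (or $c_1$). The only (valid) difference is at the last step, where you transfer the compressing disk to $c_2$ directly via the isotopy in $\Sigma_{c_1}=\pd H_1'$ and the inclusion $H_1'\subset H_1$, whereas the paper phrases the same transfer as a sequence of handleslides of $c^*$ over $c_1$ in $\Sigma$.
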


\begin{proof}
We proceed as in the proof of Lemma~\ref{lem:weak1}, untelescoping to get a generalized Heegaard splitting $(\Sigma_{c_1}, \Sigma_{c_1,c_2}, \Sigma_{c_2})$, where
\[ Y = H'_1 \cup_{\Sigma_{c_1}} C_1 \cup_{\Sigma_{c_1,c_2}} C_2 \cup_{\Sigma_{c_2}} H'_2.\]
As above, we let $c_i$ bounds a disk $D_i$ in $H_i$, so that $H_i' = H_i \setminus D_i$.

In this case, the thin surface $\Sigma_{c_1,c_2}$ is a disjoint union of two tori, and so it compresses in $Y$.  By Proposition~\ref{prop:thin} and Lemma~\ref{lem:g2thick}, at least one of the thick surfaces is reducible.  Suppose that $\Sigma_{c_1}$ has a separating reducing curve $c^*$.  Then both $c^*$ and $c_2$ bound separating compressing disks in the compression-body $C_1$, and so Lemma~\ref{lem:comp2}, the uniqueness of such a disk implies that $c_2$ and $c^*$ are homotopic in $\Sigma_{c_1}$.  As in the proof of Lemma~\ref{lem:weak1}, it follows that there is a sequence of handleslides of $c^*$ over $c_1$ in $\Sigma$ yielding $c_2$.  Since $c^*$ bounds a disk in $H_1'$, it also bounds a disk in $H_1$, and since and $c_1$ bounds a disk in $H_1$, so does $c_2$.  We conclude that $c_2$ is a reducing curve for $\Sigma$.  A parallel argument shows that if $\Sigma_{c_2}$ is reducible, then $c_1$ is a reducing curve, competing the proof.
\end{proof}

As noted earlier, the preceding lemmas comprise the key ingredients in the proof of the next proposition.

\begin{proposition}\label{prop:dp}
Suppose $\T$ is a weakly reducible genus-three trisection, with a trisection diagram $(\Sigma;\A,\n,\g)$ such that $\A_1$ disjoint from $\n_3 = \g_3$.  Then either $\T$ is reducible, or there exist curves $\n_1$ and $\g_1$ bounding disks in $H_{\n}$ and $H_{\g}$, respectively, disjoint from $\n_3$, and such that $|\A_1 \cap \n_1| = 1$ and $|\A_1 \cap \g_1| = 1$.
\end{proposition}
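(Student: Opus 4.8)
The plan is to extract from the trisection two genus‑three Heegaard splittings of simple $3$‑manifolds and feed them to the weak‑reduction lemmas above. First I would invoke Proposition~\ref{prop:g-1} to assume $k_i\in\{0,1\}$ for every $i$, so that $\pd X_1=H_{\A}\cup_{\Sigma}H_{\g}$, $\pd X_2=H_{\A}\cup_{\Sigma}H_{\n}$, and $\pd X_3=H_{\n}\cup_{\Sigma}H_{\g}$ are genus‑three Heegaard splittings of $S^3$ or $S^1\X S^2$. Write $c'$ for the common curve $\n_3=\g_3$; by hypothesis $c'$ is non‑separating, bounds disks in $H_{\n}$ and $H_{\g}$, and is disjoint from the non‑separating curve $\A_1$, which bounds a disk in $H_{\A}$. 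Thus $(\A_1,c')$ is a weak‑reducing pair for both $\pd X_1$ and $\pd X_2$, with $\A_1$ compressing $H_{\A}$ and $c'$ compressing the remaining factor in each case.

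Next I would split according to whether $\A_1$ and $c'$ are mutually separating or mutually non‑separating in $\Sigma$. In the mutually separating case, Lemma~\ref{lem:weak2} applied to $\pd X_1$ and to $\pd X_2$ shows that in each of these splittings either $\A_1$ or $c'$ is a reducing curve. If $c'$ is a reducing curve for $\pd X_1$ or $\pd X_2$, then $c'$ bounds a disk in $H_{\A}$, hence in all three handlebodies, and since $c'$ is non‑separating, $\T$ is reducible. Otherwise $\A_1$ is a reducing curve for both $\pd X_1$ and $\pd X_2$, so $\A_1$ bounds a disk in all three handlebodies and $\T$ is again reducible.

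The main case is when $\A_1$ and $c'$ are mutually non‑separating; here I would apply Lemma~\ref{lem:weak1} to $\pd X_2$ (with $c_1=\A_1$, $c_2=c'$) and again to $\pd X_1$. If either application returns conclusion (4) — that $c'$ is a reducing curve — then as above $c'$ bounds disks in all three handlebodies and $\T$ is reducible. If both applications return conclusion (1), we obtain exactly the curves $\n_1\subset H_{\n}$ and $\g_1\subset H_{\g}$ demanded by the statement, each meeting $\A_1$ once and each disjoint from $c'$. If both return conclusion (3), then $\A_1$ bounds disks in $H_{\n}$ and $H_{\g}$ as well as $H_{\A}$, so $\T$ is reducible. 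What remains are the mixed cases, in which one application yields conclusion (2) (so that $c'$ is primitive in $H_{\A}$ via a curve disjoint from $\A_1$) or conclusion (3) (so that $\A_1$ also compresses the second factor of that splitting), while the other yields (1), (2), or (3).

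The mixed cases are the crux. The key difficulty is that conclusion (3) for the $\pd X_2$‑application forces $\A_1$ to bound a disk in $H_{\n}$; since the boundaries of two compressing disks of a handlebody lie in the Lagrangian kernel of $H_1(\Sigma)\to H_1(H_{\n})$, they have even geometric intersection number, so no compressing curve of $H_{\n}$ can meet $\A_1$ once, and the $\n_1$ of the statement simply cannot exist — one must instead prove $\T$ is reducible outright. To do this I would exploit the fact that in each mixed case the offending splitting $\pd X_i$ is either stabilized (when $c'$ is primitive, the witnessing curve of $H_{\A}$ meets $c'$ once and is disjoint from $\A_1$) or carries a non‑separating reducing curve distinct from $c'$ (namely $\A_1$ itself). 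Compressing $\Sigma$ along $c'$ — legitimate since $c'$ compresses $H_{\n}$, $H_{\g}$, and, after the destabilization, $H_{\A}$ — yields genus‑two Heegaard surfaces for $S^3$ and $S^1\X S^2$ on which the surviving curve $\A_1$ together with the compressing data above is pinned down by the uniqueness statements of Lemmas~\ref{lem:comp1},~\ref{lem:comp2},~\ref{lem:unique1} and the primitivity output of Lemma~\ref{lem:primitive}. Tracking this uniqueness back up through Remark~\ref{rmk:slide}, trading homotopies in $\Sigma_{c'}$ for handleslides over $c'$ in $\Sigma$, I would produce a single curve bounding disks in all of $H_{\A}$, $H_{\n}$, $H_{\g}$, proving $\T$ reducible. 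I expect the bookkeeping in this last step — keeping careful track of which curves survive each compression and of the dictionary between homotopies in the compressed surface and handleslides in $\Sigma$ — to be the genuinely delicate part of the argument.
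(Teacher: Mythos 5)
Your setup, your treatment of the mutually separating case via Lemma~\ref{lem:weak2}, and your handling of the three ``pure'' outcomes of Lemma~\ref{lem:weak1} (both applications returning (1), both returning (3), or either returning (4)) track the paper's argument exactly. The genuine gap is in the mixed cases, which you call ``the crux'' and for which you only sketch a compression-and-bookkeeping strategy rather than give a proof. In fact these are the easiest cases, and the missing idea is simply the definition of a \emph{stabilized} trisection: a curve compressing in one handlebody that meets in a single point a curve compressing in the other two forces $\T$ to be stabilized, hence reducible. Concretely, if either application of Lemma~\ref{lem:weak1} returns conclusion (2), the witnessing curve $c_1'$ bounds a disk in $H_{\A}$, is disjoint from $\A_1$, and meets $c'=\n_3=\g_3$ once, while $c'$ bounds disks in both $H_{\n}$ and $H_{\g}$; this is verbatim the paper's definition of a stabilization, so $\T$ is reducible. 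If one application returns (3) (say $\A_1$ bounds a disk in $H_{\n}$ as well as $H_{\A}$) and the other returns (1) ($\A_1$ is primitive in $H_{\g}$ via a curve disjoint from $\n_3$), then, after permuting the roles of the three handlebodies, $\T$ is again stabilized. Every mixed case falls to one of these two one-line observations.

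By contrast, the strategy you propose for these cases does not work as written: the phrase ``$c'$ compresses $H_{\A}$ after the destabilization'' has no meaning on the original surface $\Sigma$ (destabilizing replaces $\Sigma$ by a lower-genus surface), and the plan to compress along $c'$, invoke the uniqueness Lemmas~\ref{lem:comp1} and~\ref{lem:comp2}, and trade homotopies in $\Sigma_{c'}$ for handleslides over $c'$ is precisely the machinery already spent inside the proofs of Lemmas~\ref{lem:weak1} and~\ref{lem:weak2}; once those lemmas are in hand, no further descent into compressed surfaces is needed. Your homological observation that no compressing curve of $H_{\n}$ can meet $\A_1$ exactly once when $\A_1$ itself compresses in $H_{\n}$ is correct, and it correctly signals that in that case one must prove reducibility outright --- but the proof of reducibility is the stabilization argument above, not a delicate tracking argument.
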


\begin{proof}
As noted in the previous section, we assume $k_i \leq 1$ for all $i$.  First, suppose that $\A_1$ and $\n_3$ are mutually separating.  Considering the Heegaard splitting $H_{\A} \cup_{\Sigma} H_{\n}$ of $Y = S^3$ or $S^1 \X S^2$.  By Lemma~\ref{lem:weak2}, either $\A_1$ bounds a disk in $H_{\n}$ or $\n_3$ bounds a disk in $H_{\A}$.  On the other hand, considering the Heegaard splitting $H_{\A} \cup_{\Sigma} H_{\n}$ of $Y = S^3$ or $S^1 \X S^2$, we have that either $\A_1$ bounds a disk in $H_{\g}$ or $\n_3$ bounds a disk in $H_{\A}$.  If $\n_3$ bounds a disk in $H_{\A}$, then $\T$ is reducible.  If $\n_3$ does not bound a disk in $H_{\A}$, the only possibility is that $\A_1$ bounds disks in $H_{\n}$ and $H_{\g}$, and again $\T$ is reducible.

Next, suppose that $\A_1$ and $\n_3$ are mutually non-separating.  Applying Lemma~\ref{lem:weak1} to $H_{\A} \cup_{\Sigma} H_{\n}$ yields that $\A_1$ is primitive in $H_{\n}$, $\A_1$ bounds a disk in $H_{\n}$, $\n_3$ is primitive in $H_{\A}$, or $\n_3$ bounds a disk in $H_{\A}$.  Both of the latter two possibilities together with the assumption that $\n_3$ bounds disks in both $H_{\n}$ and $H_{\g}$ imply that $\T$ is reducible, because if $\n_3$ is primitive in $H_{\A}$, then $\T$ is stabilized.  Similarly, applying Lemma~\ref{lem:weak1} to $H_{\A} \cup_{\Sigma} H_{\g}$ yields that $\A_1$ is primitive in $H_{\g}$, $\A_1$ bounds a disk in $H_{\g}$, $\g_3$ is primitive in $H_{\A}$, or $\g_3$ bounds a disk in $H_{\A}$.  As before, the second two cases imply that $\T$ is reducible.

Thus, suppose that $\A_1$ is primitive in $H_{\n}$ or $\A_1$ bounds a disk in $H_{\n}$, and in addition, suppose that $\A_1$ is primitive in $H_{\g}$ or $\A_1$ bounds a disk in $H_{\g}$.  If $\A_1$ bounds a disk in $H_{\n}$, then either $\A_1$ is primitive in $H_{\g}$, so $\T$ is stabilized, or $\A_1$ bounds a disk in $H_{\g}$, so $\A_1$ is a reducing curve.  A parallel argument show that if $\A_1$ bounds a disk in $H_{\g}$, then $\T$ must be reducible.  The only remaining unaddressed pair of cases is that $\A_1$ is primitive in $H_{\n}$ and $\A_1$ is primitive in $H_{\g}$.  By Lemma~\ref{lem:weak1}, there must be curves $\n_1$ and $\g_1$ bounding disks in $H_{\n}$ and $H_{\g}$, respectively, disjoint from $\n_3$, and such that each of $\n_1$ and $\g_1$ intersects $\A_1$ in a single point.
\end{proof}

\section{Heegaard triples}\label{sec:triple}

In the course of proving Theorem~\ref{thm:main}, we use the idea of a \emph{Heegaard triple}, which is closely related to a trisection diagram.  A \emph{Heegaard triple} is a tuple $(\Sigma;\A,\n,\g)$ that consists of a surface $\Sigma$ and a collection of three cut systems, $\A$, $\n$, and $\g$.  We let $Y_1 = H_{\g} \cup H_{\A}$, $Y_{2} = H_{\A} \cup H_{\n}$, and $Y_{3} = H_{\n} \cup H_{\g}$, so that pairings of cut systems yield Heegaard diagrams for the 3-manifolds $Y_i$.  (The indexing may seem counterintuitive, but these choices are consistent with our conventions for trisection diagrams, in which $Y_i = \pd X_i$).  Two Heegaard triples are said to be equivalent if the corresponding cut systems are related by a finite sequence of handleslides in $\Sigma$.  As such, two Heegaard triples are equivalent if and only if they determine the same three handlebodies $H_{\A}$, $H_{\n}$, and $H_\g$~\cite{johan}.

A trisection diagram is then a special case of a Heegaard triple in which $Y_i = \#^{k_i} (S^1 \X S^2)$.  In this section, we examine Heegaard triples which are *not* trisection diagrams, but which will arise naturally in our later proof.  More precisely, we study Heegaard triplets $(\Sigma;\alpha,\beta,\gamma)$ where one pair, say $(\Sigma; \alpha, \gamma)$, is a Heegaard diagram for $\#^k(S^1\times S^2)$. These diagrams are discussed in \cite{OZ_hol} and \cite{thin_4m}. Recall the definition of a \emph{standard diagram} from Section~\ref{sec:prelim}.

\begin{lemma}\label{lem:handle}
Suppose $(\Sigma;\A,\n,\g)$ is a genus-$g$ Heegaard triple such that $(\Sigma;\A,\g)$ is a standard Heegaard diagram for $\#^k(S^1 \X S^2)$.  Then $Y_{3}$ is obtained by Dehn surgery on the $g-k$-component link in $Y_{2}$ consisting of the $g-k$ curves of $\g - \A$ with framing determined by $\Sigma$.
\end{lemma}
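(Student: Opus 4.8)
The plan is to perform the Dehn surgery entirely on the $H_{\A}$ side of $Y_{2}$ and then reduce to a single-curve computation in a solid torus. Write $Y_{2} = H_{\A} \cup_{\Sigma} H_{\n}$. Because $(\Sigma;\A,\g)$ is a standard diagram, we have $\g_i = \A_i$ for $g-k < i \le g$, so the link in the statement is $L = \g_1 \cup \dots \cup \g_{g-k}$, and its components satisfy $|\g_i \cap \A_i| = 1$, $\g_i \cap \A_j = \emptyset$ for $j \ne i$, and $\g_i \cap \g_j = \emptyset$ for $i \ne j$. Each $\g_i$ lies on $\Sigma = \partial H_{\A}$, and pushing $L$ slightly into $H_{\A}$ (using a bicollar of $\Sigma$) produces a framed link $L' \subset \operatorname{int}(H_{\A})$, where the framing carried along is the one determined by $\Sigma$. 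Since $\eta(L')$ lies in the interior of $H_{\A}$, the surgery touches only $H_{\A}$, so the result is $(H_{\A})_{L'} \cup_{\Sigma} H_{\n}$; it therefore suffices to prove that $(H_{\A})_{L'}$ is diffeomorphic to $H_{\g}$ by a diffeomorphism equal to the identity on $\Sigma$, for then the surgered manifold is $H_{\g} \cup_{\Sigma} H_{\n} = Y_{3}$.

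I would first treat a single curve. Let $D_1, \dots, D_g$ be pairwise disjoint compressing disks for $H_{\A}$ with $\partial D_i = \A_i$. Since $\g_1 \subset \partial H_{\A}$ and $\g_1 \cap \A_j = \emptyset$ for $j \ge 2$, the curve $\g_1$ (hence its small push-in $\g_1'$) is disjoint from $D_2, \dots, D_g$, so the surgery on $\g_1'$ commutes with cutting $H_{\A}$ along $D_2, \dots, D_g$. The result of that cut is a solid torus $V$ with meridian $\A_1$, in which $\g_1 \subset \partial V$ is a longitude (as $|\g_1 \cap \A_1| = 1$) and $\g_1'$ is parallel to the core. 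The heart of the argument is then the genus-one statement: \emph{surface-framed surgery on a core-parallel curve in a solid torus $V$ produces the solid torus with meridian equal to the chosen longitude, via a diffeomorphism fixing $\partial V$ pointwise.} This is verified by identifying $V \setminus \eta(\g_1')$ with $T^2 \times I$ and tracking, through that product, which curve on $\partial V$ bounds a disk once the surgery solid torus has been glued to the inner boundary; the framing inherited from $\Sigma$ is precisely the one for which this curve is $\g_1$. Reversing the cut along $D_2, \dots, D_g$ then identifies $(H_{\A})_{\g_1'}$ with the handlebody whose cut system is $\{\g_1, \A_2, \dots, \A_g\}$ (a legitimate cut system, since these curves are pairwise disjoint with connected planar complement), using that a handlebody with given boundary is determined by a cut system~\cite{johan}.

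To finish, I would iterate: with respect to the new cut system $\{\g_1, \A_2, \dots, \A_g\}$, the curve $\g_2$ meets only its second member $\A_2$ (once) and is disjoint from $\g_1, \A_3, \dots, \A_g$, so the single-curve step applies again and replaces $\A_2$ by $\g_2$; repeating replaces $\A_i$ by $\g_i$ for every $i \le g-k$. Because the components of $L'$ have pairwise disjoint tubular neighborhoods, carrying out these surgeries in succession is the same as simultaneous surgery along $L'$. The final cut system is $\{\g_1, \dots, \g_{g-k}, \A_{g-k+1}, \dots, \A_g\} = \g$, so $(H_{\A})_{L'} = H_{\g}$ rel $\Sigma$, and hence $Y_{3}$ is the surgered manifold, as claimed. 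Nothing in the argument uses $g=3$, so it holds for all $g$ and $k$.

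The step I expect to be the main obstacle is the framing bookkeeping in the genus-one computation: one must check carefully that the framing ``determined by $\Sigma$'' is exactly the one yielding the new meridian $\g_i$, with no off-by-one discrepancy, and that each diffeomorphism in the argument can be chosen to fix the relevant boundary torus so that the pieces reassemble correctly. A secondary, more routine point is the verification that the surgery on $L'$ can be performed one component at a time while correctly recording how each $\g_{i+1}$ sits relative to the updated cut system.
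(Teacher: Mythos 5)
Your proof is correct, but it takes a genuinely different route from the paper. The paper's proof is 4-dimensional: it builds the cobordism $X$ from $Y_2$ to $Y_3$ by thickening $\Sigma$, gluing on $H_{\A} \X I$, $H_{\n} \X I$, $H_{\g} \X I$, and capping $Y_1 = \#^k(S^1\X S^2)$ with $\natural^k(S^1\X D^3)$, and then cites Lemma 13 of~\cite{GK} to identify a relative handle decomposition of $X$ consisting of $Y_2 \X I$ plus $g-k$ surface-framed 2-handles along $\g - \A$; the surgery description of $Y_3 = \pd_+ X$ falls out of that. You instead argue entirely in dimension three, localizing the surgery inside $H_{\A}$ and showing $(H_{\A})_{L'} \cong H_{\g}$ rel $\Sigma$ via an induction whose base case is the solid-torus computation (gluing a solid torus to the inner boundary of $T^2 \X I$ along the framing curve yields a solid torus whose meridian, read on the outer boundary, is $\g_i$). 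Your two flagged worries are fine: the surface framing of $\g_i$ survives the cut along $D_2,\dots,D_g$ because $\g_i$ misses those disks, and integer (longitudinal) framing is exactly what makes the reglued piece a solid torus with the asserted meridian; the cut system $\{\g_1,\dots,\g_i,\A_{i+1},\dots,\A_g\}$ at each stage is legitimate because the standard-diagram conditions give the needed disjointness and duality, and componentwise surgery agrees with simultaneous surgery since the tubular neighborhoods are disjoint. The trade-off: your argument is self-contained and elementary, at the cost of the rel-boundary bookkeeping; the paper's is shorter and slots directly into the 4-manifold framework it needs later (the same cobordism picture underlies Proposition~\ref{prop:surgery}), but leans on an external handle-decomposition result. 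Both establish the lemma in full generality for all $g$ and $k$.
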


\begin{proof}
The proof of this lemma is an adaptation of the proof of Lemma 13 of~\cite{GK} to the present setting.  If $(\Sigma;\A,\g)$ is a Heegaard diagram for $\#^k(S^1 \X S^2)$, we can build a 4-dimensional cobordism $X$ from $Y_{2}$ to $Y_{3}$ by first taking a 4-dimensional regular neighborhood of the central surface $\Sigma$ and gluing in copies of $H_{\A} \X I$, $H_{\n} \X I$, and $H_{\g} \X I$ along the corresponding cut systems.  The resulting 4-manifold has three boundary components, $Y_{1}$, $Y_{2}$, and $Y_{3}$, and we cap off $Y_{1} = \#^k(S^1 \X S^2)$ with $\natural^k (S^1 \X D^3)$, yielding $X$.

The proof of Lemma 13 of~\cite{GK} shows that a relative handle decomposition for $X$ is obtained by starting with $Y_{2} \X I$ and then attaching $g-k$ 4-dimensional 2-handles along the curves in $\g - \A$ with framing determined by the surface $\Sigma$.  The statement of the present lemma follows immediately.
\end{proof}

We can then use Lemma~\ref{lem:handle} to show that certain Heegaard triples do not exist.  We let $L(p,q)$ refer to a lens space that is not $S^3$ or $S^1 \X S^2$, so that $p,q \geq 2$.

\begin{lemma}\label{lem:tripexist}
There does not exist a genus-two Heegaard triple $(\Sigma;\A,\n,\g)$ such that $Y_{2} = S^3$, $Y_{3} = S^1 \X S^2$, and $Y_{1} = L(p,q) \# (S^1 \X S^2)$ or $Y_{1} = (S^1 \X S^2) \# (S^1 \X S^2)$.
\end{lemma}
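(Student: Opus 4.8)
The plan is to feed the two ``easy'' pairings of the triple --- the genus-two Heegaard splittings of $S^3$ and of $S^1 \X S^2$ --- into Lemma~\ref{lem:handle}, but with the roles cyclically permuted so that the $S^1 \X S^2$ diagram is the one put in standard form. This will exhibit $Y_2 = S^3$ as the result of a Dehn surgery on a single \emph{knot} in $Y_1$, and the contradiction will then come from the first homology of $Y_1$.

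In more detail, I would first normalize the $S^1 \X S^2$ pairing. The diagram $(\Sigma;\g,\n)$ presents the genus-two Heegaard splitting $H_{\n} \cup_\Sigma H_{\g}$ of $Y_3 = S^1 \X S^2 = \#^1(S^1 \X S^2)$, so by Waldhausen's Theorem it is related by handleslides to a standard diagram; concretely, after handleslides of the curves of $\g$ among themselves and of the curves of $\n$ among themselves we may assume $|\g_1 \cap \n_1| = 1$, $\g_2 = \n_2$, and the remaining pairs among $\{\g_1,\g_2\}$ and $\{\n_1,\n_2\}$ are disjoint. Such handleslides change none of the handlebodies $H_{\A}$, $H_{\n}$, $H_{\g}$, so the manifolds $Y_1, Y_2, Y_3$ are unchanged and $(\Sigma;\A,\n,\g)$ is still a Heegaard triple with the prescribed pairings.

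Next I would apply Lemma~\ref{lem:handle} to the relabeled Heegaard triple $(\Sigma;\g,\A,\n)$. For this triple the three associated manifolds are $H_{\n} \cup H_{\g} = Y_3$, $H_{\g} \cup H_{\A} = Y_1$, and $H_{\A} \cup H_{\n} = Y_2$, and the pair $(\Sigma;\g,\n)$ is by construction a standard diagram for $\#^1(S^1 \X S^2) = Y_3$. With $g = 2$ and $k = 1$, Lemma~\ref{lem:handle} then says that $Y_2 = S^3$ is obtained by Dehn surgery on the single curve of $\n - \g$, namely the knot $K = \n_1 \subset Y_1$, with some framing coming from $\Sigma$ (which will play no role). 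Reversing this surgery --- that is, surgering $S^3$ along the dual knot $K^*$ of the glued-in solid torus --- exhibits $Y_1$ as the result of Dehn surgery on a knot in $S^3$. Hence $H_1(Y_1)$ is a quotient of $H_1(S^3 \setminus K^*) \cong \Z$ by a single relation, so $H_1(Y_1)$ is cyclic.

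This is the desired contradiction. If $Y_1 = L(p,q) \# (S^1 \X S^2)$ with $p \geq 2$, then $H_1(Y_1) \cong \Z/p\Z \oplus \Z$, which is infinite and has nontrivial torsion, hence is not cyclic; and if $Y_1 = (S^1 \X S^2) \# (S^1 \X S^2)$, then $H_1(Y_1) \cong \Z \oplus \Z$, which has rank two and is not cyclic. In either case no such Heegaard triple exists. The argument is short once it is set up; the only points requiring care are the cyclic bookkeeping needed to match the hypothesis of Lemma~\ref{lem:handle} (the ``standard'' pairing must be the one presenting the $\#^k(S^1 \X S^2)$ summand) and the verification that the normalizing handleslides genuinely fix all three handlebodies, so that the other two pairings are left undisturbed.
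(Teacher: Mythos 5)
Your proposal is correct and is essentially the paper's argument: both normalize the $(\n,\g)$ pairing via Waldhausen, apply Lemma~\ref{lem:handle} with the roles of the cut systems permuted to express $Y_1$ as the result of Dehn surgery on a knot in $S^3$, and derive the contradiction from the fact that such a manifold has cyclic first homology. The only cosmetic difference is that you run the surgery from $Y_1$ to $S^3$ and then reverse it, whereas the paper states the conclusion directly; your extra care with the normalizing handleslides is implicit in the paper's one-line citation of Lemma~\ref{lem:handle}.
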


\begin{proof}
Suppose by way of contradiction that such $(\Sigma;\A,\n,\g)$ exists.  By Lemma~\ref{lem:handle}, it follows that $L(p,q) \# (S^1 \X S^2)$ is obtained by surgery on a knot in $S^3$.  However, if $Y$ is any 3-manifold obtained in this way, $H_1(Y)$ is cyclic, while $H_1(L(p,q) \# (S^1 \X S^2))$ is $\Z_p \oplus \Z$ and $H_1((S^1 \X S^2) \# (S^1 \X S^2))$ is $\Z^2$, a contradiction.
\end{proof}

In the next proposition, we adapt a proof from~\cite{MSZ} to a different class of Heegaard triples.

\begin{proposition}\label{prop:lens}
Suppose $(\Sigma;\A,\n,\g)$ is a genus-two Heegaard triple such that $Y_{2} = Y_{3} = S^1 \X S^2$ and $Y_{1} = L(p,q) \# (S^1 \X S^2)$.  Then there exists a non-separating curve $c \subset \Sigma$ that bounds disks in each of $H_{\A}$, $H_{\n}$, and $H_{\g}$.
\end{proposition}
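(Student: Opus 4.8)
\emph{Sketch of approach.} By Lemmas~\ref{lem:unique1} and~\ref{lem:unique2}, the genus-two Heegaard surface $\Sigma$ has, with respect to each of the three splittings $Y_1 = H_{\g}\cup H_{\A}$, $Y_2 = H_{\A}\cup H_{\n}$, and $Y_3 = H_{\n}\cup H_{\g}$, a unique non-separating reducing curve; call them $c_1$, $c_2$, and $c_3$, so that $c_1$ bounds disks in $H_{\g}$ and $H_{\A}$, $c_2$ bounds disks in $H_{\A}$ and $H_{\n}$, and $c_3$ bounds disks in $H_{\n}$ and $H_{\g}$. (Existence follows from the standard genus-two diagrams for $S^1\X S^2$ and $L(p,q)\#(S^1\X S^2)$; reducibility and Haken's Lemma~\ref{lem:haken} are the soft reason these curves are there.) If any two of $c_1,c_2,c_3$ coincide in $\Sigma$, then the common curve is non-separating and bounds disks in all three handlebodies, which is exactly the conclusion. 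So I would assume $c_1,c_2,c_3$ are pairwise non-isotopic and aim for a contradiction.

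First I would normalize the diagram. Since $Y_2 = S^1\X S^2$, Waldhausen's Theorem lets me take $(\Sigma;\A,\n)$ to be standard, so after handleslides $\A_2 = \n_2 = c_2$, $|\A_1\cap\n_1| = 1$, and all other pairs among $\A\cup\n$ are disjoint. Pick a symplectic basis $\{a_1,b_1,a_2,b_2\}$ of $H_1(\Sigma)$ with $[\A_1] = a_1$ and $[\n_1] = b_1$; the disjointness relations force $[c_2] = [\A_2] = e$ for a primitive class $e = \alpha a_2+\beta b_2$ ($\gcd(\alpha,\beta)=1$), and the handlebody kernels are $L_{\A} = \langle a_1,e\rangle$, $L_{\n} = \langle b_1,e\rangle$, with $L_{\g} = \ker(H_1(\Sigma)\to H_1(H_{\g}))$ a rank-two isotropic direct summand. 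Writing $[c_1] = \lambda a_1+\mu e\in L_{\A}$ and $[c_3] = \gamma b_1+\delta e\in L_{\n}$ (each primitive), the fact that $[c_1],[c_3]\in L_{\g}$ and $L_{\g}$ is isotropic gives $0 = [c_1]\cdot[c_3] = \lambda\gamma$. Hence $\lambda = 0$ or $\gamma = 0$, and primitivity then forces $[c_1] = \pm e$ or $[c_3] = \pm e$; after relabeling I may assume $[c_1] = \pm[c_2]$, so $e = [c_2]$ lies in $L_{\A}\cap L_{\n}\cap L_{\g}$.

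The heart of the argument is to upgrade this homological coincidence to a contradiction. The plan is to show that $c_2$ itself bounds a disk in $H_{\g}$: then $c_2$ is a non-separating reducing curve for $Y_1 = H_{\g}\cup H_{\A}$, hence $c_2 = c_1$ by the uniqueness in Lemma~\ref{lem:unique2}, contradicting $c_1\neq c_2$. To see that $c_2$ bounds a disk in $H_{\g}$, I would examine the genus-two splitting $Y_3 = H_{\n}\cup H_{\g} = S^1\X S^2$: the curve $c_2 = \n_2$ bounds a disk in $H_{\n}$, and since $[c_2] = e\in L_{\g}$ it must be disk-bounding or primitive in $H_{\g}$. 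The Cho--Koda description of the genus-two Heegaard surface of $S^1\X S^2$ (Lemmas~\ref{lem:primitive} and~\ref{lem:unique1}), together with the surgery description of $Y_1$ extracted from the standard diagram $(\Sigma;\A,\n)$ via Lemma~\ref{lem:handle} and the nonexistence result Lemma~\ref{lem:tripexist}, should eliminate the primitive case and force $c_2$ to bound a disk in $H_{\g}$, completing the proof. This mirrors the corresponding argument in~\cite{MSZ}, with Lemma~\ref{lem:unique2} supplying the rigidity that is not available when $Y_1 = \#^2(S^1\X S^2)$.

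The step I expect to be the main obstacle is precisely this last one: passing from $[c_1] = \pm[c_2]$ to the geometric statement. In a genus-two handlebody two non-isotopic non-separating disks can have the same boundary homology class, so the promotion is not formal; it genuinely requires exploiting that $c_1$ and $c_2$ are reducing curves of the specific, uniqueness-constrained manifolds $Y_1$ and $Y_2$, which is where Lemmas~\ref{lem:unique1}, \ref{lem:unique2}, \ref{lem:primitive}, \ref{lem:handle}, and~\ref{lem:tripexist} come into play, and where the careful bookkeeping lies.
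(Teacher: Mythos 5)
Your homological reduction is fine as far as it goes, but the proof has a genuine gap at exactly the point you flag as the main obstacle, and the one concrete claim you make to bridge it is false. You assert that since $[c_2]=e$ lies in $L_{\g}=\ker\bigl(H_1(\Sigma)\to H_1(H_{\g})\bigr)$, the curve $c_2$ ``must be disk-bounding or primitive in $H_{\g}$.'' Neither alternative follows: a simple closed curve in the boundary of a genus-two handlebody whose class lies in the kernel on first homology is merely null-homologous in the handlebody, i.e.\ it represents an element of the commutator subgroup of $\pi_1(H_{\g})\cong F_2$; such a curve need not be null-homotopic (let alone bound an embedded disk), and it cannot be primitive unless it is trivial in $\pi_1$, since a primitive curve generates a free factor. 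So the dichotomy you rely on to ``eliminate the primitive case'' is not available, and the remaining plan --- that Lemmas~\ref{lem:primitive}, \ref{lem:unique1}, \ref{lem:unique2}, \ref{lem:handle}, and~\ref{lem:tripexist} ``should'' force $c_2$ to bound a disk in $H_{\g}$ --- is a restatement of the proposition rather than a proof of it. (Note also that Lemma~\ref{lem:tripexist} is stated for triples with $Y_2=S^3$ and does not apply to your configuration, where $Y_2=S^1\X S^2$.)

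The missing input is Dehn surgery theory. The paper's proof normalizes the \emph{other} pair, taking $(\Sigma;\n,\g)$ standard for $Y_3=S^1\X S^2$ with $\n_1=\g_1$, so that Lemma~\ref{lem:handle} exhibits $Y_1=L(p,q)\#(S^1\X S^2)$ as Dehn surgery on the knot $\g_2\subset S^1\X S^2$. Scharlemann's theorem on reducible surgeries, combined with Gabai's theorem on knots in $S^1\X S^2$ admitting $S^1\X S^2$ surgeries (to handle the cabled case), forces $\g_2$ to lie in a $3$-ball. Haken's Lemma applied to the induced splitting $H_{\A}\cup_{\Sigma}(H_{\n}\setminus\g_2)$ of the knot exterior then produces a non-separating curve $c$ bounding disks in $H_{\A}$ and in the compression-body $H_{\n}\setminus\g_2$, and Lemma~\ref{lem:comp1} (uniqueness of the non-separating disk in that compression-body) identifies $c$ with $\n_1=\g_1$. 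It is this surgery-characterization machinery, not the uniqueness of reducing curves in Lemmas~\ref{lem:unique1} and~\ref{lem:unique2}, that promotes the homological coincidence to the geometric conclusion; without an argument of that kind your proof does not close.
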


\begin{proof}
Possibly after performing some handle-slides, we may suppose that $(\Sigma;\A,\n,\g)$ is a Heegaard triple such that the Heegaard diagram $(\Sigma;\n,\g)$ is a standard diagram for $S^1 \X S^2$, with $\n_1 = \g_1$ and $|\n_2 \cap \g_2| = 1$.  By Lemma~\ref{lem:handle}, we have that $L(p,q) \# (S^1 \X S^2)$  is obtained by Dehn surgery on $\g_2$, viewed as a knot in $Y_2 = S^1 \X S^2$.  Since both $S^1 \X S^2$ and $L(p,q) \# (S^1 \X S^2)$ are reducible, and since $S^1 \X S^2$ contains a sphere not bounding a rational homology ball, the main theorem from~\cite{Schar} implies that either $(S^1 \X S^2) \setminus \g_2$ is reducible, or $\g_2$ is cabled in $S^1 \X S^2$ and the surgery slope is the slope of the cabling annulus.

We claim that $\g_2$ is contained in a 3-ball in $S^1 \X S^2$.  If $(S^1 \X S^2) \setminus \g_2$ is reducible, we note that every separating 2-sphere in $S^1 \X S^2$ bounds a 3-ball on one side, and so $\g_2$ must be contained in the 3-ball bounded by the reducing sphere.  On the other hand, if $\g_2$ is cabled with companion $J$, surgery on $\g_2$ yields $L \# Y$, where $L$ is some lens space and $Y$ is the result of surgery on $J$.  Since $L \# Y = L(p,q) \# (S^1 \X S^2)$, it follows that $J$ is a knot in $S^1 \X S^2$ with a non-trivial $S^1 \X S^2$ surgery, and so by~\cite{Gabai1}, we have that $(S^1 \X S^2) \setminus J$ is reducible, implying that $J$, and thus $\g_2$, is contained in a 3-ball (and consequently, $(S^1 \X S^2) \setminus \g_2$ is reducible in this case as well).

Pushing $\g_2$ into $H_{\n}$, we have that $H_\A \cup_{\Sigma} (H_\n \setminus \g_2)$ is a Heegaard splitting for $(S^1 \X S^2) \setminus \g_2$, and by Haken's Lemma~\ref{lem:haken}, $\Sigma$ is reducible, considered as a Heegaard surface for this splitting.  Since $(S^1 \X S^2) \setminus \g_2 = (S^1 \X S^2) \# (S^3 \setminus \g_2)$ and $g(\Sigma) = 2$, the reducing sphere for $\Sigma$ cuts off a genus-one splitting of $S^1 \X S^2$, and so there exists a non-separating curve $c$ in $\Sigma$ bounding disks in both $H_\A$ and $H_\n \setminus \g_2$.  By construction, the curve $\n_1$ also bounds a non-separating disk in the compression-body $H_\n \setminus \g_2$.  Thus, Lemma~\ref{lem:comp1} implies that $c$ and $\n_1$ coincide, so that $\n_1$ also bounds a disk in $H_\A$.  But we assumed above that $\n_1 =\g_1$ as well, and we conclude that $c$ bounds a disk in each of $H_\A$, $H_\n$, and $H_\g$.
\end{proof}

We require a second and significantly more complicated classification result for genus-two Heegaard triples, in which the three 3-manifolds given by pairing cut systems are $S^3$, $S^3$, and $L(p,q) \# (S^1 \X S^2)$.  This proof follows the road map of the main result of~\cite{MZ2}, that $(2;0)$-trisections are standard.  The starting point for this theorem is that every non-standard genus-two Heegaard diagram for $S^3$ contains a \emph{wave}, an arc $\nu$ with endpoints the same side of one curve in the diagram and which avoids the other curves~\cite{HOT}.  If a diagram admits a wave, a process called \emph{wave surgery} produces a handleslide that reduces the total number of intersections in the diagram.  See Figure~\ref{fig:wave} for an example, and for further details, see Section 2 of~\cite{MZ2}.

\begin{figure}[h!]
  \centering
  \includegraphics[width=.4\linewidth]{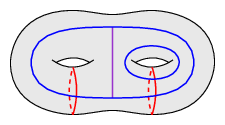} \qquad
    \includegraphics[width=.4\linewidth]{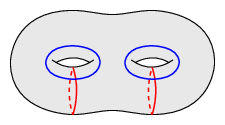}

  \caption{At left, a non-standard genus-two diagram for $S^3$ and a wave, surgery on which yields the standard genus-two diagram at right.}
	\label{fig:wave}
\end{figure}

Suppose $Y$ is a genus-two 3-manifold that can be expressed as $Y = Y_1 \# Y_2$.  Recall that a Heegaard diagram $(\Sigma;\A,\n)$ for $Y$ is \emph{standard} if there exists a separating curve $c$ disjoint from $\A \cup \n$ and \emph{non-standard} otherwise.  Such a 3-manifold $Y$ is said to have the \emph{wave property} if every non-standard genus-two Heegaard diagram admits a wave.  Relevant to our work here, Negami and Okita proved
\begin{theorem}\cite{NO}\label{thm:NO}
$L(p,q) \# (S^1 \X S^2)$ has the wave property.
\end{theorem}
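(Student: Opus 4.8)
The statement is the Negami--Okita theorem, and the plan is to prove it by extending the Homma--Ochiai--Takahashi analysis of wave-free genus-two Heegaard diagrams from $S^3$ to the fundamental group $\pi_1(Y) = \Z/p\Z * \Z$, where $Y = L(p,q)\#(S^1\X S^2)$. Let $D = (\Sigma;\A,\n)$ be a genus-two Heegaard diagram of $Y$; the goal is to show that if $D$ has no wave, then $D$ is standard, i.e.\ some separating curve is disjoint from $\A\cup\n$. Cutting $\Sigma$ along $\A_1$ and $\A_2$ yields a four-holed sphere $P$ in which $\n_1\cup\n_2$ appears as a system of disjoint properly embedded arcs; equivalently, $D$ encodes a two-generator two-relator presentation $\langle x_1,x_2\mid w_1,w_2\rangle$ of $\pi_1(Y)$, the $w_i$ being the cyclic words read along $\n_i$. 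A wave is an embedded arc meeting $\A\cup\n$ only in its two endpoints, which lie on the same side of a single curve; wave surgery along it is a handleslide that strictly decreases $|\A\cap\n|$, so we may as well assume $D$ minimizes $|\A\cap\n|$ among diagrams of $Y$ related to it by isotopy and handleslides.

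First I would set up the Homma--Ochiai--Takahashi combinatorial framework and analyze the arc pattern of $\n$ in $P$ under the wave-free hypothesis. Wave-freeness is a strong restriction on how the $\n$-arcs connect the four boundary circles of $P$, and the core technical step is to enumerate the possible normal forms --- a finite list --- exactly as in the $S^3$ case, but now allowing the relator words $w_1, w_2$ to be arbitrary rather than constrained to present the trivial group.

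The main obstacle, and the reason this is a theorem rather than an immediate corollary of Homma--Ochiai--Takahashi, is the passage from algebra back to geometry: a pattern can be ``algebraically reducible'' (a Whitehead automorphism of $F_2$ shortens $|w_1|+|w_2|$) without a wave being literally present on $\Sigma$, so the enumeration of wave-free configurations must be carried out honestly rather than imported. Once the finite list of wave-free normal forms is in hand, I would discard those whose relator words cannot present $\Z/p\Z * \Z$, using the abelianization $\Z/p\Z\oplus\Z$ to eliminate most candidates at once and the free-product structure to handle the rest. For each survivor one then checks directly that it is standard; the point at which the argument genuinely uses that $Y$ has an $S^1\X S^2$ summand (and is not, say, a lens space or $S^3$) is precisely here, where the $\Z$ free factor forces one $\n$-curve to be handleslid onto a curve isotopic to an $\A$-curve, exposing a separating reducing curve disjoint from $\A\cup\n$. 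Taking the contrapositive of ``wave-free $\Longrightarrow$ standard'' yields the wave property.

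A tempting shortcut is to invoke Haken's Lemma~\ref{lem:haken}, which already guarantees a reducing curve for $\Sigma$ (and by Lemma~\ref{lem:unique2} the non-separating one is unique), and then either isotope that curve off $\A\cup\n$ or harvest a wave from an outermost arc of its intersection with the diagram. I expect this route to collapse back onto the same combinatorial analysis: nothing prevents Haken's reducing curve from meeting $\A\cup\n$ essentially, and ruling out the bad case still requires understanding exactly which wave-free configurations can occur. So the enumeration of wave-free genus-two patterns, adapted to the relators of $\Z/p\Z * \Z$, is the unavoidable heart of the proof.
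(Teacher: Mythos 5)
The paper does not prove this statement: it is imported verbatim from Negami and Okita~\cite{NO}, so there is no internal proof to compare against. Your outline is broadly faithful to how the actual Negami--Okita argument is structured --- they do extend the Homma--Ochiai--Takahashi enumeration of wave-free genus-two configurations beyond the $S^3$ case --- but as written it is a roadmap rather than a proof. The finite enumeration of wave-free normal forms, which you yourself flag as ``the unavoidable heart,'' is asserted and never carried out, and that enumeration is essentially the entire content of the theorem; nothing in your sketch reduces it to something checkable in finite space on the page.

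Two smaller but genuine issues. First, the reduction to diagrams minimizing $|\A\cap\n|$ is neither needed nor helpful: the wave property quantifies over \emph{every} non-standard genus-two diagram, so you must show that every wave-free diagram is standard, and you cannot assume minimality of the particular wave-free diagram you are handed. Second, in your final step you describe handlesliding a $\n$-curve onto a curve isotopic to an $\A$-curve in order to ``expose'' the separating reducing curve; but standardness of a diagram, as defined in this paper, requires a separating reducing curve disjoint from $\A\cup\n$ \emph{in that diagram}, before any slides. If a handleslide is genuinely required, then the surviving normal form is a non-standard wave-free diagram --- precisely the object whose nonexistence you are trying to establish. So the survivors of your enumeration must be verified to be literally standard, not merely handleslide-equivalent to a standard diagram.
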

As in~\cite{MZ2}, we define the \emph{intersection matrix} $M(\A,\n)$ for a genus-two Heegaard diagram $(\Sigma;\A,\n)$ to be
\[ M(\A,\n) = \begin{bmatrix} \Delta(\A_1, \n_1) & \Delta(\A_1, \n_2) \\ \Delta(\A_2, \n_1) & \Delta(\A_2, \n_2) \end{bmatrix},\]
where $\Delta(\A_i, \n_j)$ represents the algebraic intersection number of the two curves.
\begin{lemma}\label{lem:det}
If $(\Sigma;\A,\n)$ is a genus-two Heegaard diagram for $L(p,q) \# (S^1 \X S^2)$, then $\det(M(\A,\n)) = 0$.
\end{lemma}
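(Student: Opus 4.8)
The plan is to exploit the fact that the intersection matrix $M(\A,\n)$ records the map on first homology induced by the Heegaard diagram. First I would recall that for a genus-two Heegaard diagram $(\Sigma;\A,\n)$ of a 3-manifold $Y$, there is a standard presentation of $H_1(Y)$: taking $[\A_1],[\A_2]$ as a basis for the kernel of $H_1(\Sigma) \to H_1(H_\A)$ and $[\n_1],[\n_2]$ for the kernel of $H_1(\Sigma) \to H_1(H_\n)$, the matrix $M(\A,\n)$ is exactly the presentation matrix for $H_1(Y)$ as an abelian group (with respect to suitable generators), via the Mayer--Vietoris sequence for $Y = H_\A \cup_\Sigma H_\n$. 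Concretely, $H_1(Y) \cong \Z^2 / M(\A,\n)\Z^2$.

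Next I would compute what this forces. We are told $Y = L(p,q) \# (S^1 \X S^2)$, so $H_1(Y) \cong \Z_p \oplus \Z$, which is an infinite group. If $\det(M(\A,\n)) \neq 0$, then $\Z^2 / M(\A,\n)\Z^2$ is a finite group of order $|\det(M(\A,\n))|$, contradicting the presence of the free $\Z$ summand. Hence $\det(M(\A,\n)) = 0$. That is the whole argument; the only real content is pinning down that $M(\A,\n)$ is genuinely a presentation matrix for $H_1(Y)$ and not merely a matrix of intersection numbers that happens to carry less information.

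The main obstacle, then, is the bookkeeping in the Mayer--Vietoris / handle-chain computation: one must verify that the cut-system curves $\A$ and $\n$ give bases with respect to which the boundary map in the Mayer--Vietoris sequence $H_1(\Sigma) \to H_1(H_\A) \oplus H_1(H_\n)$, followed by identification with the appropriate quotients, is represented precisely by $M(\A,\n)$ (up to a change of basis by an element of $GL_2(\Z)$, which does not affect the vanishing of the determinant). This is standard Heegaard-diagram homology — it is the same computation that shows the order of $H_1$ of a rational homology sphere equals $|\det M|$ — so I would cite it rather than reprove it, perhaps pointing to the analogous discussion in \cite{MZ2} or a standard reference. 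An alternative, slightly more hands-on route: observe that $b_1(Y) = 1$ implies the matrix $M(\A,\n)$, viewed over $\Q$, must have nontrivial kernel (the rank of $M$ over $\Q$ equals $2 - b_1(Y) = 1$), which immediately gives $\det(M(\A,\n)) = 0$. I would likely present this rank-over-$\Q$ version since it isolates exactly the one fact needed — that the first Betti number of $L(p,q)\#(S^1\X S^2)$ is $1$ — and avoids any discussion of torsion.
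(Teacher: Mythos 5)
Your argument is correct, but it takes a genuinely different route from the paper's. You use the standard fact that $M(\A,\n)$ is a presentation matrix for $H_1(Y)$ for the splitting $Y = H_{\A}\cup_\Sigma H_{\n}$ (equivalently, that $\operatorname{rank}_{\Q} M(\A,\n) = 2 - b_1(Y)$), so that $b_1\bigl(L(p,q)\#(S^1\X S^2)\bigr)=1$ immediately forces $\det(M(\A,\n))=0$; this is an elementary homological computation, valid in any genus and for any 3-manifold with positive first Betti number, and the rank-over-$\Q$ phrasing you end with isolates exactly the one fact needed. The paper instead argues geometrically: the standard diagram visibly has determinant zero, Haken's Lemma together with the classification of genus-one splittings shows every genus-two Heegaard diagram of $L(p,q)\#(S^1\X S^2)$ is handleslide equivalent to the standard one, and handleslides preserve $|\det(M(\A,\n))|$. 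Your proof is shorter and avoids the nontrivial input that the genus-two splitting of this manifold is standard; the paper's route costs more but reuses machinery (Haken's Lemma, standard diagrams) already set up and needed elsewhere in the argument. Either proof suffices for the lemma.
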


\begin{proof}
First, note that the standard diagram $(\Sigma;\A,\n)$ yields
\[ M(\A,\n) = \begin{bmatrix} p & 0 \\ 0 & 0 \end{bmatrix},\]
and so the lemma is true for the standard diagram.  By Haken's Lemma~\ref{lem:haken}, every genus-two Heegaard splitting of $L(p,q) \# (S^1 \X S^2)$ is a connected sum of standard genus-one splittings, and so every genus-two Heegaard diagram of $L(p,q) \# (S^1 \X S^2)$ is handleslide equivalent to the standard one.  Finally, handleslides preserve $|\det(M(\A,\n))|$, completing the proof.
\end{proof}

As part of our classification, we will need

\begin{proposition}\cite[Proposition 3.1]{MZ2}\label{prop:trip}
Suppose $(\Sigma;\A,\n,\g)$ is a genus-two Heegaard triple such that $Y_{2} = S^3$, both $Y_{1}$ and $Y_{3}$ have the wave property, and both $|\det(M(\A,\g))|$ and $|\det(M(\n,\g))|$ are at most one.  Then there exists an equivalent Heegaard triple $(\Sigma;\A',\n',\g')$ such that $(\Sigma;\A',\n')$ is a standard and either $(\Sigma;\A',\g')$ or $(\Sigma;\n',\g')$ is standard.
\end{proposition}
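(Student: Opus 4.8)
The plan is to adapt the argument of~\cite{MZ2} for its Proposition 3.1 (the combinatorial engine behind the classification of $(2;0)$-trisections), substituting $Y_1$ and $Y_3$ for two of the three copies of $S^3$ appearing there. The only properties of those manifolds the argument uses are that each has the wave property and that the relevant intersection determinants are small, both of which are hypothesized here; and $Y_2 = S^3$ has the wave property by the Wave Theorem~\cite{HOT}. The engine is wave surgery: whenever one of the three pairwise genus-two diagrams $(\Sigma;\A,\n)$, $(\Sigma;\A,\g)$, $(\Sigma;\n,\g)$ is non-standard, that diagram contains a wave, and surgery along it is a handleslide of one cut system that strictly decreases the number of intersections in that diagram. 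I would set up an induction on the lexicographically ordered complexity $\big(|\A \cap \n|,\ |\A \cap \g| + |\n \cap \g|\big)$ over all Heegaard triples equivalent to $(\Sigma;\A,\n,\g)$, and pass to a minimizer. Since handleslides preserve $|\det M(\cdot,\cdot)|$, every triple in the class still satisfies $|\det M(\A,\g)| \le 1$ and $|\det M(\n,\g)| \le 1$, so these bounds remain available throughout.

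At a minimizer the first coordinate is as small as possible. Since $Y_2 = H_{\A} \cup_{\Sigma} H_{\n} = S^3$, Waldhausen's theorem~\cite{waldhausen} shows that the equivalence class contains a triple whose $(\A,\n)$-part is the standard genus-two diagram, for which $|\A \cap \n| = 2$; moreover $2$ is the minimum possible value and it is attained only by the standard diagram (a genus-two $S^3$ diagram with total intersection at most $2$ that is not the standard one would force an $S^1 \X S^2$ connected summand, which is absurd). Hence at the minimizer $(\Sigma;\A,\n)$ is standard, so there is a separating curve $c$ disjoint from $\A \cup \n$ splitting it into two genus-one $S^3$ diagrams; and any further move that does not increase the complexity must keep $(\Sigma;\A,\n)$ standard.

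Now suppose, for contradiction, that neither $(\Sigma;\A,\g)$ nor $(\Sigma;\n,\g)$ is standard. By the wave property of $Y_1$ (respectively $Y_3$), there is a wave, say an arc $\omega$ for $(\Sigma;\A,\g)$ with both endpoints on a single curve of $\A \cup \g$, interior disjoint from that cut system, possibly crossing $\n$. The goal is to produce a wave whose surgery is a handleslide among the $\g$-curves that is disjoint from $\n$: such a surgery leaves $(\Sigma;\A,\n)$ standard and $|\n \cap \g|$ unchanged while strictly decreasing $|\A \cap \g|$, contradicting minimality. To get there one cleans up $\omega$ by an innermost-arc/outermost-disk argument against the compressing disks of $H_{\n}$, using that $(\Sigma;\A,\n)$ is standard so that the two genus-one summands are tightly controlled; the determinant bounds $|\det M(\A,\g)|, |\det M(\n,\g)| \le 1$ limit how the curves $\g_1,\g_2$ can wind through those summands and thereby rule out the configurations in which $\omega$ is forced to be an $\A$-wave or is forced to meet $\n$ essentially. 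Running this to its conclusion shows that at the minimizer one of $(\Sigma;\A,\g)$, $(\Sigma;\n,\g)$ must already be standard, which is exactly the statement.

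The hard part is this cleanup: a priori a wave chosen to reduce $|\A \cap \g|$ might be based at an $\A$-curve, so that its surgery destroys the standard form of $(\Sigma;\A,\n)$, or might raise $|\n \cap \g|$, and the whole proof rests on showing that neither happens. This requires reproducing the casework of~\cite{MZ2}: classifying the positions of a wave relative to the separating curve $c$ and the two genus-one summands of the standard diagram $(\Sigma;\A,\n)$; invoking $|\det M(\A,\g)|, |\det M(\n,\g)| \le 1$ to discard the bad positions; and, when the surgery does perturb the standard form, post-composing with handleslides supported in the summands to restore standardness at no net cost in complexity. Once this bookkeeping is in place the complexity is strictly decreasing at each non-terminal step, so the induction terminates and the proposition follows.
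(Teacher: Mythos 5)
Your proposal takes essentially the same approach as the paper: the paper does not reprove the result but cites the (twenty-plus page) proof of Proposition 3.1 of~\cite{MZ2} and observes, exactly as you do, that the only properties of $Y_1$ and $Y_3$ used there are the wave property and the bounds $|\det(M(\A,\g))|, |\det(M(\n,\g))| \leq 1$ (the latter entering only in Lemma 3.5 of that paper), so the argument goes through verbatim in this generality. Your additional sketch of the wave-surgery/complexity-minimization engine is a fair summary of what~\cite{MZ2} does, and you correctly flag that the casework is the substance being imported rather than reproduced.
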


\begin{remark}
In~\cite{MZ2}, Proposition 3.1 is stated for $(2;0)$-trisections, so that $Y_{1} = Y_{3} = S^3$.  However, its proof (which spans Sections 3, 4, and 5 of that paper and is over 20 pages long), uses only two properties of $Y_{1}$ and $Y_{3}$, that they have the wave property (used throughout the argument) and that $|\det(M(\A,\g))|$ and $|\det(M(\n,\g))|$ are at most one (used in the proof of Lemma 3.5 of \cite{MZ2}).  Thus, while we have stated the conclusion of Proposition 3.1 of \cite{MZ2} here in the greater generality, the proof remains the same.
\end{remark}

The remainder of the section is dedicated to proving the next theorem, which, in essence, shows that a certain type of genus-two Heegaard triple is \emph{stabilized}, in the sense that there are dual curves bounding disks in the three handlebodies $H_{\A}$, $H_{\n}$, and $H_{\g}$, to be used later as an ingredient in the proof of Theorem~\ref{thm:main}.

\begin{theorem}\label{thm:triple}
Suppose $(\Sigma;\A,\n,\g)$ is a genus-two Heegaard triple such that $Y_{1} = Y_{2} = S^3$ and $Y_{3} = L(p,q) \# (S^1 \X S^2)$.  Then there exist curves $c_1$ bounding a disk in $H_{\A}$ and $c_2$ bounding disks in both $H_{\n}$ and $H_{\g}$ such that $|c_1 \cap c_2| = 1$.
\end{theorem}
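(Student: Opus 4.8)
The plan is to normalize the triple via Proposition~\ref{prop:trip} and then read off the curves $c_1$ and $c_2$ from the normal form.

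First I would check that Proposition~\ref{prop:trip} applies to $(\Sigma;\A,\n,\g)$ as given: $Y_2 = S^3$; both $Y_1 = S^3$ (which has the wave property by~\cite{HOT}) and $Y_3 = L(p,q) \# (S^1 \X S^2)$ (which has it by Theorem~\ref{thm:NO}) have the wave property; $|\det M(\A,\g)| = |H_1(Y_1)| = 1$, since $(\Sigma;\A,\g)$ is a Heegaard diagram for $S^3$; and $\det M(\n,\g) = 0$ by Lemma~\ref{lem:det}, since $(\Sigma;\n,\g)$ is a Heegaard diagram for $L(p,q) \# (S^1\X S^2)$. Proposition~\ref{prop:trip} then supplies an equivalent Heegaard triple, which I continue to call $(\Sigma;\A,\n,\g)$ since equivalence does not alter $H_{\A}$, $H_{\n}$, $H_{\g}$, in which $(\Sigma;\A,\n)$ is a standard genus-two diagram for $S^3$ and, moreover, either $(\Sigma;\n,\g)$ is a standard genus-two diagram for $L(p,q) \# (S^1 \X S^2)$ (Case I) or $(\Sigma;\A,\g)$ is a standard genus-two diagram for $S^3$ (Case II).

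Case I is straightforward: the standard form of $(\Sigma;\n,\g)$ exhibits it as the connected sum of a standard genus-one diagram for $S^1 \X S^2$ and a standard genus-one diagram for $L(p,q)$, so the curve $c_2$ of $\n$ lying on the $S^1 \X S^2$ summand belongs to both $\n$ and $\g$ and is a non-separating curve bounding disks in $H_{\n}$ and $H_{\g}$. Since $(\Sigma;\A,\n)$ is a standard $S^3$ diagram, each curve of $\n$ is dual in $H_{\A}$ to exactly one curve of $\A$; relabeling $\A$ if necessary, $|\A_1 \cap c_2| = 1$, and then $c_1 := \A_1$ bounds a disk in $H_{\A}$ with $|c_1 \cap c_2| = 1$, as required.

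Case II is the main obstacle; here both $(\Sigma;\A,\n)$ and $(\Sigma;\A,\g)$ are standard $S^3$ diagrams, but a priori $(\Sigma;\n,\g)$ need not be standard. By Haken's Lemma~\ref{lem:haken} and Lemma~\ref{lem:unique2}, the genus-two splitting $H_{\n}\cup_{\Sigma} H_{\g}$ of $L(p,q) \# (S^1\X S^2)$ has a unique non-separating reducing curve $c_2$, which automatically bounds disks in $H_{\n}$ and $H_{\g}$; it remains to show $c_2$ is primitive in $H_{\A}$. To locate $c_2$, I would apply Lemma~\ref{lem:handle} to the standard diagram $(\Sigma;\A,\g)$ for $S^3 = \#^0(S^1\X S^2)$, which realizes $Y_3$ as Dehn surgery on the framed two-component link $\{\g_1,\g_2\} \subset Y_2 = H_{\A}\cup H_{\n} \cong S^3$, where $\g_1$ meets $\A_1$ once and misses $\A_2$, and $\g_2$ meets $\A_2$ once and misses $\A_1$. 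Since $Y_3$ is reducible with an $S^1\X S^2$ summand, the analysis of reducible surgeries used in the proof of Proposition~\ref{prop:lens} (Scharlemann's theorem~\cite{Schar} together with Gabai's Property R~\cite{Gabai1}) should force one component, say $\g_2$, to be an unknot with surface framing $0$ whose surgery yields the $S^1\X S^2$ summand; tracing $\g_2$ through the standard genus-two diagram $(\Sigma;\A,\n)$ of $S^3$ should then show that $\g_2$ coincides, after handleslides in $H_{\n}$, with $\n_2$, so that $\g_2$ bounds a disk in $H_{\n}$ as well. Taking $c_2 := \g_2$ and $c_1 := \A_2$, with $|\A_2 \cap \g_2| = 1$ coming from the standard $S^3$ diagram $(\Sigma;\A,\g)$, would complete the proof. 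The delicate part is making this forcing precise, i.e., pinning down exactly how the two-component surgery link must sit on the standard genus-two Heegaard surface of $S^3$ once one knows the surgery yields $L(p,q)\#(S^1\X S^2)$; an alternative would be to show directly that every non-separating compressing curve for one handlebody of the standard genus-two $S^3$ splitting is primitive in the other, which applies to $c_2$ at once but requires promoting the B\'ezout-theoretic dual homology class of $c_2$ to an honest dual compressing curve meeting $c_2$ once.
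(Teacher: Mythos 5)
Your setup and your Case I track the paper's proof exactly: apply Theorem~\ref{thm:NO}, Lemma~\ref{lem:det}, and Proposition~\ref{prop:trip} to make $(\Sigma;\A,\n)$ standard together with one of the other two pairs, and when $(\Sigma;\n,\g)$ is standard the shared curve together with its dual curve from the standard $S^3$ diagram $(\Sigma;\A,\n)$ finishes the argument. The problem is Case II, which carries all the content of the theorem, and there your argument has a genuine gap. Lemma~\ref{lem:handle} applied to the standard diagram $(\Sigma;\A,\g)$ presents $Y_3$ as surgery on the \emph{two-component} link $\{\g_1,\g_2\}$ in $S^3$ (no curve of $\g$ coincides with a curve of $\A$ here), whereas Scharlemann's theorem~\cite{Schar}, as used in Proposition~\ref{prop:lens}, concerns surgery on a single knot; there is no off-the-shelf analogue forcing a component of a two-component link with reducible surgery to be a split $0$-framed unknot, and the statement you are trying to extract is essentially Corollary~\ref{cor:surgery}, which the paper \emph{deduces from} Theorem~\ref{thm:triple} --- so this route is circular in spirit. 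Even granting that $\g_2$ is a $0$-framed unknot producing the $S^1 \X S^2$ summand, the step ``tracing $\g_2$ through $(\Sigma;\A,\n)$ shows it coincides with $\n_2$ after handleslides'' is unsupported: an unknot lying on the genus-two Heegaard surface of $S^3$ need not bound a disk in either handlebody. Worse, your fallback claim --- that every non-separating compressing curve of one handlebody of the genus-two splitting of $S^3$ is primitive in the other --- is false: if $K$ is a knotted tunnel number one knot with tunnel $\tau$ and $H_{\n} = \eta(K \cup \tau)$, then the cocore disk of the tunnel is a non-separating compressing disk of $H_{\n}$ whose boundary is not primitive in $H_{\A} = S^3 \setminus H_{\n}$, since attaching a $2$-handle along it yields the exterior of $K$ rather than a solid torus. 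So the promotion from algebraic to geometric duality that you flag as ``delicate'' cannot come from the $S^3$ splitting alone; it must use the full hypotheses on the triple.

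The paper closes Case II by a different, combinatorial route: cut $\Sigma$ along $\A$ and analyze the Whitehead graph $\Sigma_{\A}(\n,\g)$. The wave property of $L(p,q) \# (S^1 \X S^2)$ (Theorem~\ref{thm:NO}) combined with Proposition~\ref{prop:restrict} pins the slope of the $\n$-arcs in $\Sigma_{\A}$ to $\frac{1}{0}$ or $\frac{1}{2}$; the first subcase makes $(\Sigma;\n,\g)$ standard and reduces to Case I, and in the second the determinant condition of Lemma~\ref{lem:det} forces the windings to satisfy $W_1 = W_2 = \pm 1$, at which point a curve $c_2$ disjoint from $\n \cup \g$ and meeting each curve of $\A$ once is exhibited explicitly in the cut-open picture. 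To complete your write-up you would need either to reproduce that slope-and-winding analysis or to supply a genuine substitute for the link-surgery forcing; as written, Case II is not proved.
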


Suppose that $(\Sigma;\A,\n,\g)$ satisfies the hypotheses of Theorem~\ref{thm:triple}.  Then we can apply Theorem~\ref{thm:NO}, Lemma~\ref{lem:det}, and Proposition~\ref{prop:trip} to suppose without loss of generality that $(\Sigma;\A,\n)$ is standard and either $(\Sigma;\A,\g)$ or $(\Sigma; \n,\g)$ is standard.  In the latter case, $\n$ and $\g$ have a curve, say $\n_1$ in common, and in addition, $|\n_1 \cap \A_1| = 1$, and the theorem holds.  The more difficult case is the former, and so we will suppose that $(\Sigma;\A,\g)$ is also a standard diagram for $S^3$.

We let $\iota(c,c')$ denote the geometric intersection of two curves and, as above, $\Delta(c,c')$ the algebraic intersection.  We orient $\A$, $\n$, and $\g$ so that $\Delta(\A,\n) = 2$ and $\Delta(\A,\g) = 2$.  As in~\cite{MZ2}, we obtain the \emph{Whitehead graph} $\Sigma_{\A}(\n,\g)$ by cutting $\Sigma$ along $\alpha$ to obtain $\Sigma_{\A}$, a sphere with four boundary components, denoted $\A_1^{\pm}$ and $\A_2^{\pm}$.  In $\Sigma_{\A}$, each of $\n$ and $\g$ become a pair of essential arcs.  Essential arcs in $\Sigma_{\A}$ can be parametrized by the extended rational numbers $\Q \cup \{\frac{1}{0}\}$, which we call the \emph{slope} of an essential arc, noting that both arcs of $\n$ have the same slope in $\Sigma_{\A}$, as do both arcs of $\g$.  As in Lemma 6.1 of~\cite{MZ2}, we normalize this parametrization so that arcs of $\g$ have slope $\frac{1}{0}$, and letting $\frac{m}{n}$ denote the slope of the $\n$ arcs, we may suppose without loss of generality that $-\frac{1}{2} < \frac{m}{n} \leq \frac{1}{2}$.  We note that $m$ must be odd and $n$ must be even, since the $\beta$ arcs connect $\A_1^-$ to $\A_1^+$ and $\A_2^-$ to $\A_2^+$.

Following,~\cite{MZ2}, we distinguish between two different types of points of intersection of $\n$ and $\g$.  A point $x \in \n \cap \g$ is \emph{inessential} if there is a homotopy of $\n \cup \g$ pushing $x$ into $\A_i^{\pm}$ (without increasing the number of intersections of $\n$ or $\g$ with $\A_i$) and \emph{essential} otherwise.  The \emph{winding} $W_i$ of $\n_i$ relative to $\g_i$ at $\A_i$ is defined to be the number of inessential points of intersection homotopic into $\A_i$, counted with sign.  See Figure~\ref{fig:graphex} for an example.

\begin{figure}[h!]
  \centering
  \includegraphics[width=.4\linewidth]{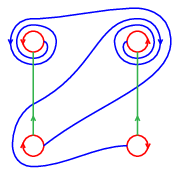}
\put (-147,28) {\textcolor{red}{$\A_1^-$}}
\put (-147,128) {\textcolor{red}{$\A_1^+$}}
\put (-47,28) {\textcolor{red}{$\A_2^-$}}
\put (-47,128) {\textcolor{red}{$\A_2^+$}}
\put (-154,88) {\textcolor{ForestGreen}{$\g_1$}}
\put (-37,68) {\textcolor{ForestGreen}{$\g_2$}}
\put (-93,67) {\textcolor{blue}{$\n_1$}}
\put (-113,105) {\textcolor{blue}{$\n_2$}}
  \caption{An example of a Whitehead graph $\Sigma_{\A}(\n,\g)$, in which $\frac{m}{n} = \frac{1}{2}$, $W_1 = 2$, and $W_2 = -2$.}
	\label{fig:graphex}
\end{figure}

We import two results from~\cite{MZ2}, restating them slightly so that they apply to the context here.

\begin{proposition}\label{prop:restrict}
If $(\Sigma;\n,\g)$ contains a wave, then $|m| \leq 1$ and $|n| \leq 2$.
\end{proposition}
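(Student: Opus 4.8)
The plan is to assume $(\Sigma;\n,\g)$ carries a wave $\nu$ and to extract the slope constraint by transporting the picture into the four-holed sphere $\Sigma_{\A}$, where $\n$ and $\g$ appear as the slope-$\tfrac{m}{n}$ and slope-$\tfrac{1}{0}$ arc pairs fixed above. First I would record the combinatorics of $\n\cup\g$ inside $\Sigma_{\A}$: for fixed slopes the geometric intersections $\iota(\n_i,\g_j)$ are determined, hence so is the cell structure that $\n\cup\g$ cuts on $\Sigma_{\A}$, and I would enumerate the complementary regions, keeping track for each region of which arcs of $\n$, which arcs of $\g$, and which of the boundary circles $\A_1^{\pm},\A_2^{\pm}$ lie on its boundary. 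The output I want is a dichotomy governed by the winding numbers $W_1,W_2$: when $|m|\geq 2$ or $|n|\geq 3$, every complementary region either meets a single boundary circle $\A_i^{\pm}$ (so any arc it contains is boundary-parallel into $\A$) or is a rectangle whose pair of $\n$-sides lies on two \emph{different} curves of $\n$ and whose pair of $\g$-sides lies on two different curves of $\g$; whereas when $\tfrac{m}{n}=\tfrac12$ (equivalently $|m|\leq 1$ and $|n|\leq 2$, under the normalization $-\tfrac12<\tfrac{m}{n}\leq\tfrac12$) a distinguished region appears that violates this.

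Next I would isotope $\nu$ into minimal position with respect to $\A$ and cut along $\A$, so that $\nu$ becomes a family of disjoint arcs in $\Sigma_{\A}$, each disjoint from every arc of $\n\cup\g$ and hence each contained in a single complementary region, with endpoints on the based curve of $\nu$ and on the circles $\A_i^{\pm}$. An outermost-arc argument on $|\nu\cap\A|$---removing any piece of $\nu$ that cuts off a disk together with a sub-arc of some $\A_i^{\pm}$ by an ambient isotopy, which in $\Sigma$ is exactly a sequence of handleslides over $\A_i$ as in Remark~\ref{rmk:slide}---arranges that no piece of $\nu$ is boundary-parallel into $\A$ and that these slides do not disturb the standardness of $(\Sigma;\A,\n)$ or $(\Sigma;\A,\g)$. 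Now a piece of $\nu$ with both endpoints on the based curve can live only in the distinguished region, forcing $\tfrac{m}{n}=\tfrac12$; and if instead $\nu$ has a piece with an endpoint on some $\A_i^{\pm}$, then using that the wave surgery determined by $\nu$ must strictly reduce $|\n\cap\g|$ (so the two sub-arcs of the based curve cut off by $\partial\nu$ cannot both meet the opposite cut system the same number of times) one again runs into the rectangle constraint of the dichotomy and concludes $|m|\leq 1$, $|n|\leq 2$. A short separate check handles a wave based at a curve of $\g$ rather than of $\n$, using the same region list.

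The step I expect to be the main obstacle is the region enumeration fused with the interaction between $\nu$ and $\A$: the slope language controls only the arcs $\n$ and $\g$, not $\nu$, so one must simultaneously run the minimal-position reduction on $|\nu\cap\A|$ and verify it creates no inessential intersections of $\n$ or $\g$ with $\A$ that would break the normalization, all while the cell structure of $\n\cup\g$ on $\Sigma_{\A}$ changes shape as $\tfrac{m}{n}$ varies. I would carry this out by following the corresponding analysis in~\cite{MZ2}: the present hypotheses differ from those there only in the identity of the three ambient $3$-manifolds $Y_1,Y_2,Y_3$, not in the local combinatorics of the diagram near $\Sigma_{\A}$, so that argument---including its use of the winding numbers---applies essentially verbatim.
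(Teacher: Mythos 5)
Your proposal is correct in substance and follows essentially the same route as the paper, which simply cites the wave analysis of~\cite{MZ2} (Lemma 6.2 there for $|m|>1$, and the second paragraph of the proof of Theorem 6.5 there for $|n|\geq 4$) rather than re-running the region enumeration. One small detail: the cited analysis only rules out waves when $|n|\geq 4$, so the sharper bound $|n|\leq 2$ comes from the parity observation that $n$ must be even (the $\n$-arcs connect $\A_i^-$ to $\A_i^+$), not from a dichotomy already kicking in at $|n|\geq 3$ as your sketch asserts.
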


\begin{proof}
Lemma 6.2 from~\cite{MZ2} shows that if $|m| > 1$, then $(\Sigma;\n,\g)$ does not contain a wave.  Similarly, the second paragraph of the proof of Theorem 6.5 of~\cite{MZ2} demonstrates that if $|n| \geq 4$, then $(\Sigma;\n,\g)$ does not contain a wave.  The statement of the proposition follows from the fact that $n$ is even.
\end{proof}

\begin{proof}[Proof of Theorem~\ref{thm:triple}]
Suppose $(\Sigma;\A,\n,\g)$ is a genus-two Heegaard triple such that $Y_{1} = Y_{2} = S^3$ and $Y_{3} = L(p,q) \# (S^1 \X S^2)$.  By Theorem~\ref{thm:NO}, Lemma~\ref{lem:det}, and Proposition~\ref{prop:trip}, we may assume that $(\Sigma;\A,\n)$ is standard and either $(\Sigma;\A,\g)$ or $(\Sigma; \n,\g)$ is standard.  As noted above, the latter implies the theorem holds, and so we suppose that $(\Sigma;\A,\g)$ is standard.

By Proposition~\ref{prop:restrict}, and after applying a symmetry to $\Sigma$ if necessary, we may assume that $\frac{m}{n} = \frac{1}{0}$ or $\frac{1}{2}$.  If $\frac{m}{n} = \frac{1}{0}$, then there is a separating curve $c^*$ disjoint from $\A$, $\n$, and $\g$, implying that $(\Sigma;\n,\g)$ is standard and completing the proof as above.  See Figure~\ref{fig:n0}.  Otherwise, we suppose that $\frac{m}{n} = \frac{1}{2}$.  In this case, we compute
\[ M(\n,\g) = \begin{bmatrix}
W_1 & 1 \\ 1 & W_1 \end{bmatrix},\]
and by Lemma~\ref{lem:det}, we have $0 = \det(M(\n,\g)) = W_1W_2 - 1$.  It follows that $W_1 = W_2 = \pm 1$.  In either case, we can find a curve $c_2$ disjoint from both $\n$ and $\g$ and meeting both $\A_1$ and $\A_2$ in one point.  See Figure~\ref{fig:n2}.  Thus, $c_2$ bounds a disk in $H_{\n}$ and $H_{\g}$, and if we let $c_1 = \A_1$, the proof of the theorem is complete.
\end{proof}

\begin{figure}[h!]
  \centering
  \includegraphics[width=.4\linewidth]{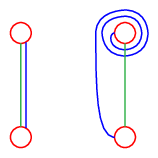}
  \put (-157,21) {\textcolor{red}{$\A_1^-$}}
\put (-157,134) {\textcolor{red}{$\A_1^+$}}
\put (-43,21) {\textcolor{red}{$\A_2^-$}}
\put (-43,134) {\textcolor{red}{$\A_2^+$}}
\put (-163,88) {\textcolor{ForestGreen}{$\g_1$}}
\put (-48,88) {\textcolor{ForestGreen}{$\g_2$}}
\put (-142,88) {\textcolor{blue}{$\n_1$}}
\put (-81,88) {\textcolor{blue}{$\n_2$}}
  \caption{The case $\frac{m}{n} = \frac{1}{0}$}
	\label{fig:n0}
\end{figure}

\begin{figure}[h!]
  \centering
  \includegraphics[width=.4\linewidth]{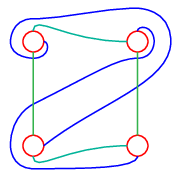} \qquad
  \includegraphics[width=.4\linewidth]{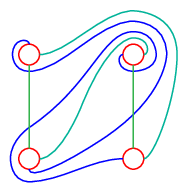}  
    \put (-347,27) {\textcolor{red}{$\A_1^-$}}
\put (-347,128) {\textcolor{red}{$\A_1^+$}}
\put (-246,27) {\textcolor{red}{$\A_2^-$}}
\put (-246,128) {\textcolor{red}{$\A_2^+$}}
\put (-354,92) {\textcolor{ForestGreen}{$\g_1$}}
\put (-236,58) {\textcolor{ForestGreen}{$\g_2$}}
\put (-283,50) {\textcolor{blue}{$\n_1$}}
\put (-308,97) {\textcolor{blue}{$\n_2$}}
\put (-290,126) {\textcolor{BlueGreen}{$c_2$}}
  \put (-153,27) {\textcolor{red}{$\A_1^-$}}
\put (-153,123) {\textcolor{red}{$\A_1^+$}}
\put (-57,27) {\textcolor{red}{$\A_2^-$}}
\put (-57,123) {\textcolor{red}{$\A_2^+$}}
\put (-159,88) {\textcolor{ForestGreen}{$\g_1$}}
\put (-48,53) {\textcolor{ForestGreen}{$\g_2$}}
\put (-93,49) {\textcolor{blue}{$\n_1$}}
\put (-115,100) {\textcolor{blue}{$\n_2$}}
\put (-90,83) {\textcolor{BlueGreen}{$c_2$}}
  \caption{The case $\frac{m}{n} = \frac{1}{2}$ and $W_1 = W_2 = 1$ (left) or $W_1 = W_2 = -1$ (right).  In either instance, we find a teal curve $c_2$ disjoint from $\n$ and $\g$ and meeting each curve in $\A$ once.}
	\label{fig:n2}
\end{figure}

As an aside, we state a corollary that may be of independent interest.  A 2-component link $L$ in $S^3$ is \emph{tunnel number one} if there exists an embedded arc $\tau$ which meets $L$ only in its endpoints and such that $S^3 \setminus (L \cup \tau)$ is a genus-two handlebody.  In this case, $L$ is isotopic into a genus-two Heegaard surface for $S^3$ with any possible integral framing.  So that we do not take too much of a detour, we sketch the argument here, but it parallels the argument which uses the main theorem from~\cite{MZ2} to prove Corollary 1.4 in that paper; see also~\cite{MZDehn} for further connections between trisections and Dehn surgeries.

\begin{corollary}\label{cor:surgery}
Suppose $L$ is a tunnel number one link in $S^3$ with an integral surgery to $L(p,q) \# (S^1 \X S^2)$.  Then $L$ is handle-slide equivalent to an unlink.
\end{corollary}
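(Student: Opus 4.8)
The plan is to recast the surgery as a genus-two Heegaard triple of the type handled by Theorem~\ref{thm:triple}, and then to read the conclusion of that theorem back as a Kirby-calculus simplification of $L$, following the proof of Corollary~1.4 of~\cite{MZ2}. (We may assume $L(p,q)$ is a genuine lens space, since if the surgery yields $S^3$ or $S^1\X S^2$ the conclusion is easy or already contained in~\cite{MZ2}.)

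First I would build the triple. Let $\tau$ be the unknotting tunnel, $W=\eta(L\cup\tau)$ a genus-two handlebody, and $H=S^3\setminus W$, which is a genus-two handlebody by hypothesis, so $S^3=W\cup_{\Sigma}H$ is a genus-two Heegaard splitting. Each component $L_i$ is a core loop of the spine $L\cup\tau$ of $W$, so after isotoping $L$ onto $\Sigma$ — and inserting meridional twists along the handles of $W$ to realize the prescribed integral surgery coefficients as surface framings, which preserves the property that $L_i$ meets the $i$-th meridian of $W$ once (this is the parenthetical assertion above; see the proof of Lemma~\ref{lem:handle} and~\cite{GK}) — the curves $L_1,L_2$ are dual to the standard meridian cut system $\A=\{\mu_1,\mu_2\}$ of $W$, i.e.\ $|\mu_i\cap L_j|=\delta_{ij}$. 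Taking $\n$ to be any cut system for $H$ and $\g=\{L_1,L_2\}$, the triple $(\Sigma;\A,\n,\g)$ has $(\Sigma;\A,\g)$ a standard genus-two Heegaard diagram for $S^3$. Hence $Y_2=H_{\A}\cup H_{\n}=W\cup H=S^3$ and $Y_1=H_{\A}\cup H_{\g}=S^3$, while Lemma~\ref{lem:handle} identifies $Y_3=H_{\n}\cup H_{\g}$ with the result of surgery on $L\subset Y_2=S^3$ with surface framings, which is $L(p,q)\#(S^1\X S^2)$ by hypothesis. Thus $(\Sigma;\A,\n,\g)$ satisfies the hypotheses of Theorem~\ref{thm:triple}.

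Next I would apply Theorem~\ref{thm:triple} to obtain a curve $c_1$ bounding a disk in $H_{\A}$ and a curve $c_2$ bounding disks in both $H_{\n}$ and $H_{\g}$ with $|c_1\cap c_2|=1$; in particular $c_2$ is non-separating. Since $c_2$ compresses $H_{\g}$, there is a cut system for $H_{\g}$ containing $c_2$, and by~\cite{johan} it is obtained from $\g$ by handleslides; carrying these handleslides of curves lying on the Heegaard surface $\Sigma$ out as Kirby handleslides of the framed link $L$ (as in~\cite{GK}), I may assume after a handle-slide equivalence that $L=\{c_2,L_2\}$. By further slides of $L_2$ over $c_2$, and handleslides of $\A$ and $\n$, I may also arrange that $c_1\in\A$, $c_2\in\n$, and that the remaining curves of $\A,\n,\g$ are disjoint from the once-punctured torus $T=\eta(c_1\cup c_2)$. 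Cutting the triple along $\partial T$ expresses it as a connected sum of the genus-one triple supported in $T$, whose three pairwise unions of handlebodies are $S^3$, $S^3$, and $S^1\X S^2$ (the last because the $\n$- and $\g$-curves of the $T$-piece are both $c_2$), with a genus-one triple $(\Sigma_0;a,b,c)$ in which $c=L_2$; matching with $Y_1=Y_2=S^3$ and $Y_3=L(p,q)\#(S^1\X S^2)$ forces the pairwise unions for $(\Sigma_0;a,b,c)$ to be $S^3$, $S^3$, and $L(p,q)$. Then $|a\cap b|=|a\cap c|=1$, so $\Sigma_0$ is a Heegaard torus for $S^3$ in which $c=L_2$ is a $(1,n)$-curve, hence an unknot, and Lemma~\ref{lem:handle} gives $L(n,1)=L(p,q)$. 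Meanwhile $c_2$ bounds a properly embedded disk in $H_{\n}=H=S^3\setminus W$ meeting $\Sigma$ exactly in $c_2$, so $c_2$ is an unknot split from $L_2$ whose surface framing is $0$. Therefore $L$ is handle-slide equivalent to the $0$-framed unknot $c_2$ together with the $n$-framed unknot $L_2$ — an unlink — which completes the argument.

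The step I expect to be the main obstacle is the dictionary in the third paragraph between handleslides of cut systems on $\Sigma$ and genuine Kirby handleslides of the framed link, together with the framing bookkeeping showing that cutting the triple along $\partial T$ removes a $0$-framed split unknot rather than disturbing the surviving component; this is precisely the part that parallels the proof of Corollary~1.4 of~\cite{MZ2}, which I would follow closely. By contrast, the construction of the triple is essentially formal once the ``any integral framing'' statement is granted, and the invocation of Theorem~\ref{thm:triple} is immediate.
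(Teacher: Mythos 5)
Your proposal is correct and follows essentially the same route as the paper: realize the framed link as the $\g$ cut system of a genus-two Heegaard triple with $Y_1=Y_2=S^3$ and $Y_3=L(p,q)\#(S^1\X S^2)$, invoke Theorem~\ref{thm:triple}, and then perform handleslides to standardize the diagram (the paper's Figure~\ref{fig:n0}) and read off $L$ as a split union of a $0$-framed unknot and an $n$-framed unknot. Your connected-sum-along-$\pd T$ endgame is a slightly more elaborate version of the paper's ``slide the remaining curves'' step, but it is the same argument.
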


\begin{proof}
Suppose $L$ is a tunnel number one link in $S^3$ with the requisite surgery, and let $S^3 = H_1 \cup_{\Sigma} H_2$ be the genus-two Heegaard splitting for $S^3$.  Then there is an embedding of $L$ in $\Sigma$ such that the surface framing of $L$ agrees with the framing of the surgery, and $L$ is dual to a cut system $\A$ for $H_1$.  Letting $\n$ be a cut system for $H_2$ and letting $\g = L$, we have that $(\Sigma;\A,\n,\g)$ is a Heegaard triple in which $Y_1 = S^3$ and $Y_2 = S^3$.  Moreover, we can apply Lemma~\ref{lem:handle}, which asserts that $Y_3$ is the result of surface-framed surgery on $L$ in $H_1 \cup_{\Sigma} H_2$; that is, $Y_3 = L(p,q) \# (S^1 \X S^2)$.  By Theorem~\ref{thm:triple}, $(\Sigma;\A,\n,\g)$ is equivalent to a diagram $(\Sigma;\A',\n',\g')$ such that $\n_1' = \g_1'$ and $|\A_1' \cap \n_1'|=1$.  By sliding the remaining curves if necessary, the resulting diagram appears as in Figure~\ref{fig:n0}.  In this case, the curves $\g'$, viewed a link in $Y_2$, comprise the split union of the 0-framed unknot $\g_1'$ and the $q$-framed unknot $\g_2'$, which is a $(1,q)$-torus knot in $Y_2$.  Note that $\g = L$ and $\g'$ are related by handleslides in $\Sigma$, completing the proof.
\end{proof}

\section{Five-chains and surgery}\label{sec:fivechain}

In this section, we introduce five-chains and connect five-chains with the surgery operations described in Section~\ref{sec:prelim}.  In the next section, we will prove that a weakly reducible genus-three trisection is reducible or contains a five-chain.  A five-chain is shown at left in Figure~\ref{fig:fivesurgery}, and the result of surgery on the five-chain, a new trisection diagram in which the genus has been reduced by one, is shown at right.  Below, we invoke the work of the first author and Moeller on $\star$-trisection diagrams in \cite{AM} to prove Proposition~\ref{prop:surgery}, that if a trisection $\T'$ of $X'$ is the result of five-chain surgery on a trisection $\T$ of $X$, then $X$ is obtained by surgery on a loop in $X'$.  

\begin{figure}[h!]
  \centering
  \includegraphics[width=.25\linewidth]{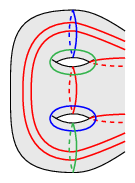} \quad \raisebox{2.5cm}{$\longrightarrow$}
    \includegraphics[width=.25\linewidth]{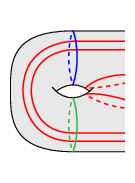}

  \caption{Obtaining $(\Sigma_{\A_1};\A',\n',\g')$ (right) by five-chain surgery on $(\Sigma;\A,\n,\g)$ (left).}
	\label{fig:fivesurgery}
\end{figure}

We begin by describing the $\star$-trisection machinery of~\cite{AM}:  Suppose $X$ admits a genus-$g$ trisection $\T$ with diagram $(\Sigma;\A,\n,\g)$.  A \emph{decomposed curve} $\ell$ is an immersed curve $\ell$ in $\Sigma$ which is the endpoint union of three embedded arcs, $a$, $b$, and $c$, such that $a \cap \A = b \cap \n = c \cap \g = \emp$.  (In this paper, all decomposed curves will be embedded but the theory works more generally.)  Define $\Sigma'$ to be the genus-$(g+2)$ surface obtained by removing disk neighborhoods of the boundary points of the three arcs $a$, $b$, and $c$ and attaching a pair of pants $P$.  A properly embedded arc in $P$ is called a \emph{wave} if both of its endpoints are contained in the same component of $\pd P$ and a \emph{seam} if both endpoints are in different components of $\pd P$.  Let $a'$, $b'$, and $c'$ be pairwise disjoint and non-isotopic seams in $P$ such that $\A_0 = a \cup a'$, $\n_0 = b \cup b'$, and $\g_0 = c \cup c'$ are simple closed curves in $\Sigma'$.  Finally, let $\A_0'$ be component of $\pd P$ that meets $b'$ and $c'$, let $\n_0'$ be the component of $\pd P$ that meets $c'$ and $a'$, and let $\g_0'$ be the component of $\pd P$ that meets $a'$ and $b'$.

Now, define $\A' = \A \cup \{\A_0,\A_0'\}$, $\n' = \n \cup \{\n_0, \n_0'\}$, and $\g' = \g \cup \{\g_0, \g_0'\}$.  We say that $(\Sigma';\A',\n',\g')$ is obtained by \emph{surgery on $\ell$}.  A local picture of the process of obtaining $(\Sigma';\A',\n',\g')$ from $(\Sigma;\A,\n,\g)$ and $\ell$ is shown in Figure~\ref{fig:deccurve}.

\begin{figure}[h!]
  \centering
  \includegraphics[width=.3\linewidth]{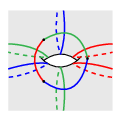} \qquad
    \includegraphics[width=.3\linewidth]{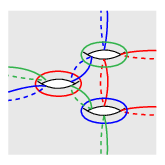}
\put (-198,35) {\textcolor{blue}{$b$}}
\put (-199,90) {\textcolor{ForestGreen}{$c$}}
\put (-258,64) {\textcolor{red}{$a$}}
\put (-42,23) {\textcolor{blue}{$\n_0$}}
\put (-42,63) {\textcolor{red}{$\A'_0$}}
\put (-93,47) {\textcolor{red}{$\A_0$}}
\put (-35,99) {\textcolor{ForestGreen}{$\g_0$}}
\put (-64,58) {\textcolor{ForestGreen}{$\g'_0$}}
\put (-80,82) {\textcolor{blue}{$\n'_0$}}
  \caption{At left, a decomposed curve $\ell$ in a trisection diagram $(\Sigma;\A,\n,\g)$.  At right, the diagram $(\Sigma';\A',\n',\g')$ obtained by surgery on $\ell$.}
	\label{fig:deccurve}
\end{figure}

A $\star$-trisection diagram is a tuple $(F;a,b,c)$ consisting of a compact surface $F$ and three sets of curves describing three compression bodies with positive boundary equal to $F$ such that, each pair of curves, is slide equivalent to a standard diagram; see Figure 10 of~\cite{AM}. A $\star$-trisection diagram describes a decomposition of compact 4-manifolds similar to a relative trisection. Please refer to~\cite{AM} for the precise definitions. The following 
is contained in the proof of Theorem 7.1 of~\cite{AM}:

\begin{theorem}\label{thm:roman}
Suppose $X$ admits a $(g;k_1,k_2,k_3)$-trisection $\T$ with diagram $(\Sigma;\A,\n,\g)$ and decomposed curve $\ell$, and let $(\Sigma';\A',\n',\g')$ be obtained by surgery on $\ell$.  Then $(\Sigma';\A',\n',\g')$ is a trisection diagram for a $(g+2; k_1,k_2,k_3)$-trisection $\T'$ of a 4-manifold $X'$, where $X'$ is obtained by surgery on $\ell$, viewed as a loop in $X$.
\end{theorem}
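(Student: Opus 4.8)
The plan is to prove the statement in two stages: first, that $(\Sigma';\A',\n',\g')$ is a genuine trisection diagram of complexity $(g+2;k_1,k_2,k_3)$; and second, that the closed 4-manifold $X'$ it determines is diffeomorphic to a 4-manifold obtained by surgery on the loop $\ell \subset X$. Stage one is bookkeeping combined with the $\star$-trisection calculus of~\cite{AM}; stage two carries the real content, via a relative-trisection gluing argument.

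For stage one, the genus of $\Sigma'$ is pinned down by an Euler characteristic count: $\Sigma'$ is obtained from $\Sigma$ by deleting three open disks (neighborhoods of the endpoints of $a$, $b$, $c$) and gluing in a pair of pants $P$, so $\chi(\Sigma') = \chi(\Sigma) - 3 + \chi(P) = (2-2g) - 3 - 1 = 2 - 2(g+2)$, whence $\Sigma'$ has genus $g+2$; moreover each of $\A'$, $\n'$, $\g'$ has $g+2$ curves and so is a cut system of the right size. It remains to check that each of the three pairwise diagrams $(\Sigma';\A',\n')$, $(\Sigma';\n',\g')$, $(\Sigma';\g',\A')$ is a Heegaard diagram for $\#^{k_i}(S^1 \X S^2)$ with the same $k_i$ as the corresponding pair in $(\Sigma;\A,\n,\g)$. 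This is precisely what the defining conditions of a $\star$-trisection diagram are designed to deliver: the seams $a'$, $b'$, $c'$ and the boundary curves $\A_0'$, $\n_0'$, $\g_0'$ make each pair of curve systems on $P$ slide-equivalent to a standard (relative) diagram, while the hypotheses $a \cap \A = b \cap \n = c \cap \g = \emp$ ensure that the $\star$-piece meets the rest of $\Sigma$ only through those standard pieces. A routine but somewhat involved verification --- carried out in the first part of the proof of Theorem~7.1 of~\cite{AM}, see Figure~10 of~\cite{AM} --- then shows each pairwise diagram is slide-equivalent to a standard diagram for $\#^{k_i}(S^1 \X S^2)$, and I would simply quote it. This certifies that $(\Sigma';\A',\n',\g')$ determines a closed 4-manifold $X'$ with a $(g+2;k_1,k_2,k_3)$-trisection $\T'$.

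For stage two, I would realize $\ell$ as a loop on the central surface $\Sigma \subset X$ and analyze a regular neighborhood $\eta(\ell) \cong S^1 \X D^3$ compatibly with $\T$. Because each arc of $\ell$ avoids the corresponding cut system, $\eta(\ell)$ inherits from $\T$ a relative trisection, and the disjointness conditions force this to be the standard relative trisection of $S^1 \X D^3$; hence $X \setminus \ell$ carries an induced relative-trisection structure with standard boundary pattern on $S^1 \X S^2$. On the other side, the pair of pants $P$ together with the seams $a'$, $b'$, $c'$ and the curves $\A_0'$, $\n_0'$, $\g_0'$ is exactly a relative trisection diagram for the standard relative trisection of $S^2 \X D^2$ with the matching boundary pattern. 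Gluing this relative trisection of $S^2 \X D^2$ onto $X \setminus \ell$ along their common $S^1 \X S^2$ boundary accomplishes two things at once: it assembles the closed 4-manifold $(X \setminus \ell) \cup (S^2 \X D^2)$, which is by definition the result of surgery on $\ell$, and it realizes exactly the move on diagrams, supported near $\ell$, that produces $(\Sigma';\A',\n',\g')$ from $(\Sigma;\A,\n,\g)$ (compare Figure~\ref{fig:deccurve}). Following the three 4-dimensional 1-handlebody sectors through this operation shows the parameters $k_i$ are unchanged, which gives the complexity $(g+2;k_1,k_2,k_3)$; which of the two siblings one obtains is recorded by the framing data encoded in $P$.

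The step I expect to be the genuine obstacle is the relative-trisection analysis in stage two: one must show that the relative trisection induced on $\eta(\ell)$ really is standard --- this is where the hypotheses $a \cap \A = b \cap \n = c \cap \g = \emp$ are used in an essential way --- and one must check that the boundary data of the two relative trisections being glued agree, so that the gluing is legitimate. Once those points are settled, the identification of $X'$ with the surgered manifold, and the claim that the diagrammatic move is surgery on $\ell$, both follow by unwinding the Gay--Kirby construction of a 4-manifold from a trisection diagram. As the excerpt notes, all of this is contained in the proof of Theorem~7.1 of~\cite{AM}, so in practice this proof amounts to citing and repackaging that argument.
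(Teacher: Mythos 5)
Your proposal is correct and follows essentially the same route as the paper: both amount to repackaging the proof of Theorem~7.1 of~\cite{AM}, using the facts that $(P;\A_0',\n_0',\g_0')$ is the standard $\star$-trisection diagram for $S^2 \X D^2$ and that the complementary piece $(\widetilde{\Sigma};\A\cup\{\A_0'\},\n\cup\{\n_0'\},\g\cup\{\g_0'\})$ is a $\star$-trisection diagram for $X \setminus \ell$, so that the Pasting Lemma exhibits $X'$ as $(X\setminus\ell)\cup(S^2\X D^2)$. The only cosmetic differences are that the paper verifies the pairwise diagrams by noting that compressing along $\A_0'$ and $\n_0'$ destabilizes $(\Sigma';\A',\n')$ back to $(\Sigma;\A,\n)$ (rather than your Euler-characteristic count plus citation), and that it runs the gluing by decomposing $X'$ along $\A_0'\cup\n_0'\cup\g_0'$ rather than building up from $\eta(\ell)\subset X$.
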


\begin{proof}
First, we note that $(\Sigma';\A',\n')$ admits two destabilizations, which can be realized by compressing along $\A'_0$ and $\n'_0$, the result of which yields $(\Sigma;\A,\n)$.  Similar statements hold for $(\Sigma';\n',\g')$ and $(\Sigma';\g',\A')$.  Thus, $\T'$ is a $(g+2;k_1,k_2,k_3)$-trisection diagram. 

As shown in Figure 16 of~\cite{AM}, the tuple $(P; \alpha'_0, \beta'_0, \gamma'_0)$ is a $\star$-trisection diagram for $S^2\times D^2$. By construction, cutting $\Sigma'$ along $\alpha'_0\cup \beta'_0\cup \gamma'_0$ leaves us with a pair of pants $P$ and a surface $\widetilde{\Sigma}$ homeomorphic to a copy of $\Sigma$ with three small disks removed. Thus, the Pasting Lemma for $\star$-trisections~\cite[Rem 5.2]{AM} implies that $X'$ can be decomposed as the union $X=(S^2\times D^2)\cup Y$ for some 4-manifold $Y$. This new 4-manifold $Y$ is described by the $\star$-trisection diagram $(\widetilde{\Sigma}; \alpha\cup \{\alpha'_0\}, \beta\cup \{\beta'_0\}, \gamma\cup \{\gamma'_0\})$. In Section 6 of~\cite{AM} (specifically Remark 6.2), it was proven that the tuple $(\widetilde{\Sigma}; \alpha\cup \{\alpha'_0\}, \beta\cup \{\beta'_0\}, \gamma\cup \{\gamma'_0\})$ is a $\star$-trisection diagram for $X\setminus \ell$ where $\ell$ is the decomposed loop $a\cup b\cup c$. Hence, $X'$ can be obtained by removing a neighborhood of $\ell$ and gluing in a copy of $S^2 \X D^2$; that is, $X'$ is the result of loop surgery along $\ell$. 
\end{proof}

\begin{remark}
Meier's diagrams for trisections of spun lens spaces~\cite{meier} are examples of diagrams obtained from surgery on a loop in a genus-one trisection of $S^1\times S^3$.  An example is shown in Figure~\ref{fig:spun_star}.  At left, we see a decomposed curve in the genus-one trisection of $S^1 \X S^3$.  At right, the result of loop surgery on $\ell$ is handleslide equivalent to the trisection diagram of spun $\mathbb{RP}^3$ shown in Figure~\ref{fig:spunrp2}.
\end{remark}

\begin{figure}[h!]
  \centering
  \includegraphics[width=.4\linewidth]{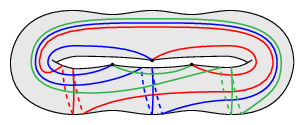} \qquad
    \includegraphics[width=.4\linewidth]{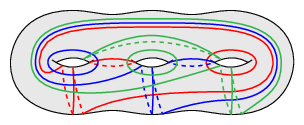}
  \caption{At left, a decomposed curve $\ell$ in a genus-one trisection diagram of $S^1\times S^3$.  At right, the diagram of spun $\mathbb{RP}^3$ obtained by surgery on $\ell$, which is equivalent to the diagram in Figure~\ref{fig:spunrp2}.}
	\label{fig:spun_star}
\end{figure}

Next, consider a trisection diagram $(\Sigma;\A,\n,\g)$ for a $(g;k_1,k_2,k_3)$-trisection $\T$.  We say that the trisection diagram contains a \emph{five-chain} $\{\g_2,\n_1,\A_1,\g_1,\n_2\}$ if
\[ \A_1 \cap \n_2 = \A_1 \cap \g_2 = \n_2 \cap \g_2 = \n_1 \cap \g_1 = \emp \quad \text{and} \quad |\g_2 \cap \n_1| = |\n_1 \cap \A_1| = |\A_1 \cap \g_1| = |\g_1 \cap \n_1| = 1,\]
and if one component $P$ of $\Sigma \setminus (\A_1 \cup \n_2 \cup \g_2)$ is a pair of pants (a thrice-punctured sphere).  See Figure~\ref{fig:5chain} for a general example.  Note that diagrams obtained by surgery on a loop (e.g. Figure~\ref{fig:deccurve} and~\ref{fig:spun_star}) always contain a five-chain, but the converse is not necessarily true.

\begin{figure}[h!]
  \centering
  \includegraphics[width=.25\linewidth]{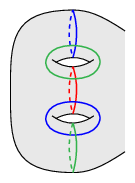}
  \put (-25,49) {\textcolor{blue}{$\n_1$}}
\put (-44,73) {\textcolor{red}{$\A_1$}}
\put (-25,97) {\textcolor{ForestGreen}{$\g_1$}}
\put (-44,122) {\textcolor{blue}{$\n_2$}}
\put (-44,25) {\textcolor{ForestGreen}{$\g_2$}}
  \caption{A five-chain $\{\g_2,\n_1,\A_1,\g_1,\n_2\}$ within a trisection diagram.}
	\label{fig:5chain}
\end{figure}

\begin{lemma}\label{lem:fiveslide}
Suppose $(\Sigma;\A,\n,\g)$ contains a \emph{five-chain} $\{\g_2,\n_1,\A_1,\g_1,\n_2\}$.  Possibly after handeslides, we may assume that $\A$ intersects $\text{int}(P)$ in some number of parallel seams, $\n$ intersects $\text{int}(P)$ a single seam coming from $\n_2$, and $\g$ intersects $\text{int}(P)$ in a single seam coming from $\g_2$.
\end{lemma}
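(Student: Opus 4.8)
The plan is to put the three cut systems in minimal position, catalogue the few isotopy types of arc in which a cut system can meet $\text{int}(P)$, and then delete the unwanted types using innermost-disk isotopies together with handleslides over $\A_1$, $\n_2$, $\g_2$ and, at the very end, over $\n_1$ and $\g_1$. First I would isotope $\A\cup\n\cup\g$ into minimal position with itself and with $\A_1\cup\n_2\cup\g_2$. Since $\A_1$, $\n_2$, $\g_2$ are pairwise disjoint and $P$ is a pair-of-pants component of $\Sigma\setminus(\A_1\cup\n_2\cup\g_2)$, the boundary $\partial P$ is the union of a copy of $\A_1$, a copy of $\n_2$, and a copy of $\g_2$, which I denote $\partial_{\A}P$, $\partial_{\n}P$, $\partial_{\g}P$ (cf. Figure~\ref{fig:5chain}). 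Because $\A_1\in\A$, every arc of $\A\cap\text{int}(P)$ misses $\partial_{\A}P$, so in the pair of pants $P$ it is either a spanning seam from $\partial_{\n}P$ to $\partial_{\g}P$ (any two such are isotopic in $P$) or a wave, i.e. an arc with both endpoints on $\partial_{\n}P$ or both on $\partial_{\g}P$; a wave either cuts off a disk in $P$ or separates the remaining two boundary circles, one of which is always $\partial_{\A}P$. The symmetric statements hold for $\n\cap\text{int}(P)$ (arcs miss $\partial_{\n}P$; a non-disk wave separates two circles, one being $\partial_{\n}P$) and for $\g\cap\text{int}(P)$.

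Next I would eliminate every wave. A disk-bounding wave of an $\A$-curve based at $\partial_{\n}P$, chosen so that the disk it cuts off is innermost, yields an innermost bigon between that $\A$-curve and $\n_2$, contradicting minimal position; likewise at $\partial_{\g}P$ with $\g_2$. Hence $\A$ has no disk-bounding wave in $P$. If an $\A$-curve has a non-disk wave $w$, then $\partial_{\A}P$ (a copy of $\A_1$) lies to one side of $w$, and a single handleslide of that curve over $\A_1$, banded along a pushoff of $\partial_{\A}P$ into $P$, unwinds $w$ into a disk-bounding wave, which then disappears upon re-establishing minimal position; this strictly drops $\sum_i|\A_i\cap\text{int}(P)|$. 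Iterating, and running the identical argument for $\n$ with handleslides over $\n_2$ and for $\g$ with handleslides over $\g_2$, I reach a diagram in which $\A\cap\text{int}(P)$, $\n\cap\text{int}(P)$, and $\g\cap\text{int}(P)$ each consist of mutually parallel spanning seams; this already gives the statement for $\A$.

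It remains to cut the $\n$-seams and $\g$-seams down to one each. The crossing of $\n_1$ with $\A_1$ forces $\n_1$ to pass through $\partial_{\A}P$, so the sub-arc of $\n_1$ from there to its unique crossing with $\g_2$ is a single seam $\sigma$ in $\text{int}(P)$; since $|\n_1\cap\A_1|=|\n_1\cap\g_2|=1$ this is all of $\n_1\cap\text{int}(P)$, and likewise $\g_1\cap\text{int}(P)$ is a single seam. Any further seam of $\n$ in $P$ lies on some $\n_j$ with $j\neq 1$ and is isotopic in $P$ to $\sigma$; sliding $\n_j$ over $\n_1$, with the band taken in the strip of $P$ between that seam and $\sigma$ near $\partial_{\g}P$, turns $\n_j$'s seam into a disk-bounding wave, which is pushed out of $P$ by an isotopy of $\n_j$, lowering the number of $\n$-seams by one while fixing $\sigma$. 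After finitely many such moves $\n\cap\text{int}(P)=\sigma$, and the same argument with $\g_1$ in place of $\n_1$ leaves $\g\cap\text{int}(P)$ equal to the single seam of $\g_1$. Since handleslides among $\n$-curves fix $\A$ and handleslides among $\g$-curves fix $\A$ and $\n$, the three normalizations are carried out one after another without interference, and the lemma follows.

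I expect the wave-removal step to demand the most care: minimal position alone rules out only the disk-bounding waves, so the separating waves must be handled explicitly, and one must check both that the prescribed handleslide genuinely unwinds such a wave into a disk-bounding one — which requires tracking the orientation in the band sum — and that the procedure terminates, for instance by keeping $\sum_i|\A_i\cap\text{int}(P)|$ (and its $\n$- and $\g$-analogues) as a monovariant while re-establishing minimal position after each handleslide. A secondary bookkeeping issue is that a single cut-system curve may meet $P$ in several arcs, so each deletion move has to be iterated carefully so as not to create new arcs in $P$ along other curves.
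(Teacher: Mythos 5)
Your proposal is correct and follows essentially the same route as the paper: waves in $P$ are eliminated by slides over the boundary curves $\A_1$, $\n_2$, $\g_2$ together with bigon removal, and the remaining arcs are parallel seams, of which the superfluous $\n$- and $\g$-seams are removed by further slides and pushed out of $P$. Your choice to slide the extra $\n$-seams over $\n_1$ (the curve carrying the seam that survives) is the move that actually works, and your attention to separating versus disk-bounding waves and to termination fills in details the paper leaves implicit.
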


\begin{proof}
In general, curves in $\A$, $\n$, and $\g$ (excluding curves in $\pd P$) meet $P$ in waves or seams.  Any wave in $\A \cap P$ can be slid over $\A_1$ and homotoped away from $P$, and similar statements hold for waves in $\n \cap P$ and $\g \cap P$.  Thus, after handleslides, we have assume that $\A$, $\n$, and $\g$ meet $\text{int}(P)$ in seams.  All $\n$-seams must be parallel to the one arc of $\n_2 \cap P$, and as such, each seam can be slid over $\n_2$ and then homotoped away from $P$.  Similarly, $\g$-seams are parallel to $\g_2 \cap P$ and can be slid and homotoped away from $P$, completing the proof.  See Figure~\ref{fig:fiveslide}.
\end{proof}

\begin{figure}[h!]
  \centering
  \includegraphics[width=.2\linewidth]{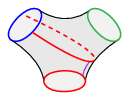} \raisebox{1cm}{$\longrightarrow$}
  \includegraphics[width=.2\linewidth]{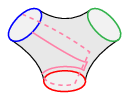} \qquad \quad 
  \includegraphics[width=.2\linewidth]{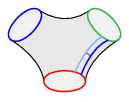}  \raisebox{1cm}{$\longrightarrow$}
  \includegraphics[width=.2\linewidth]{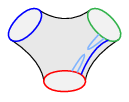}
  \caption{Examples of the handleslides described in Lemma~\ref{lem:fiveslide}}
	\label{fig:fiveslide}
\end{figure}

Now, suppose $(\Sigma;\A,\n,\g)$ contains a five-chain $\{\g_2,\n_1,\A_1,\g_1,\n_2\}$ such that $P$ meets $\A$, $\n$, and $\g$ as described in Lemma~\ref{lem:fiveslide}, and let $\A' = \A - \{\A_1\}$, $\n' = \n - \{\n_1\}$, and $\g' = \g - \{\g_1\}$, so that $\A'$, $\n'$, and $\g'$ are cut systems for the compressed surface $\Sigma_{\A_1}$.  We say that the Heegaard triple $(\Sigma_{\A_1};\A',\n',\g')$ is obtained from the original diagram by \emph{five-chain surgery}.  See Figure~\ref{fig:fivesurgery}.

\begin{lemma}\label{lem:fivechain}
If $(\Sigma;\A,\n,\g)$ is a $(g;k_1,k_2,k_3)$-trisection diagram containing a five-chain, and $(\Sigma_{\A_1};\A',\n',\g')$ is obtained by five-chain surgery, then $(\Sigma_{\A_1};\A',\n',\g')$ is a $(g-1;k_1,k_2,k_3+1)$-trisection diagram.
\end{lemma}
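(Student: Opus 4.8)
The plan is to verify directly that $(\Sigma_{\A_1};\A',\n',\g')$ satisfies the defining conditions of a $(g-1;k_1,k_2,k_3+1)$-trisection diagram, namely that $\A'$, $\n'$, $\g'$ are cut systems for a genus-$(g-1)$ surface and that the three pairings $(\Sigma_{\A_1};\A',\g')$, $(\Sigma_{\A_1};\A',\n')$, $(\Sigma_{\A_1};\n',\g')$ are Heegaard diagrams for $\#^{k_1}(S^1 \X S^2)$, $\#^{k_2}(S^1 \X S^2)$, and $\#^{k_3+1}(S^1 \X S^2)$, respectively; the conclusion then follows from the characterization of trisection diagrams among Heegaard triples recalled in Section~\ref{sec:prelim} (cf.~\cite{GK}).

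First I would dispense with the bookkeeping. Since $\A_1$ lies in the cut system $\A$ it is non-separating, so $\Sigma_{\A_1}$ has genus $g-1$. The five-chain conditions give $\A_1 \cap \n_2 = \A_1 \cap \g_2 = \emptyset$ already, and after sliding the remaining curves of $\n$ over $\n_1$ and those of $\g$ over $\g_1$, I may assume $\A_1$ is disjoint from all of $\n \setminus \{\n_1\}$ and all of $\g \setminus \{\g_1\}$; since these slides change neither $\A_1$ nor $\n_2, \g_2$, the pair of pants $P$ is untouched. Now $\A'$, $\n'$, $\g'$ are genuine collections of curves on $\Sigma_{\A_1}$, and one checks (using that $\{\A_1\}\cup\n'$ is a cut system for $\Sigma$) that each is a cut system for $\Sigma_{\A_1}$, with $H_{\A'} = H_{\A}\setminus D_{\A_1}$, while $H_{\n'}$ (resp.\ $H_{\g'}$) is obtained from $H_{\n}$ (resp.\ $H_{\g}$) by attaching a $3$-dimensional $2$-handle along $\A_1$, which is primitive there with dual $\n_1$ (resp.\ $\g_1$).

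For the first two pairings: since $|\A_1 \cap \n_1| = 1$ and $\A_1$ is disjoint from $\n \setminus \{\n_1\}$, the diagram $(\Sigma;\A,\n)$ is stabilized along the cancelling pair $(\A_1,\n_1)$, and the diagram obtained by compressing along $\A_1$ and deleting $\A_1$ from $\A$ and $\n_1$ from $\n$ — which is exactly $(\Sigma_{\A_1};\A',\n')$ — is a destabilization, hence represents the same manifold $Y_2 = \#^{k_2}(S^1 \X S^2)$; the identical argument with $(\A_1,\g_1)$ gives that $(\Sigma_{\A_1};\A',\g')$ represents $Y_1 = \#^{k_1}(S^1 \X S^2)$. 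For the third pairing, the pair of pants $P$ has boundary a single copy of each of the pairwise disjoint curves $\A_1$, $\n_2$, $\g_2$, so compressing $\Sigma$ along $\A_1$ turns $P$ into an annulus in $\Sigma_{\A_1}$ with boundary $\n_2 \cup \g_2$; thus $\n_2$ and $\g_2$ are isotopic in $\Sigma_{\A_1}$ to a single curve $c$ lying in both cut systems $\n'$ and $\g'$. Then $c$ is a non-separating reducing curve for $H_{\n'}\cup_{\Sigma_{\A_1}} H_{\g'}$, so $Y_3' \cong Y'' \# (S^1 \X S^2)$, where $Y''$ is represented by the genus-$(g-2)$ diagram obtained by deleting $c$ from both cut systems and compressing. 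To identify $Y''$, note that the five-chain relations give $|\n_1 \cap \g_2| = |\n_2 \cap \g_1| = 1$ and $\n_1 \cap \g_1 = \n_2 \cap \g_2 = \emptyset$, so $(\n_1,\g_2)$ and $(\n_2,\g_1)$ are two cancelling pairs in $(\Sigma;\n,\g)$; after handleslides these exhibit $(\Sigma;\n,\g)$ as a twofold stabilization of precisely that genus-$(g-2)$ diagram, so $Y'' = Y_3 = \#^{k_3}(S^1 \X S^2)$ and $Y_3' = \#^{k_3+1}(S^1 \X S^2)$.

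The step I expect to be the main obstacle is the handleslide bookkeeping underlying the last identification: one must check that the slides used to make $\A_1$ disjoint from $\n\setminus\{\n_1\}$ and $\g\setminus\{\g_1\}$, to realize the two destabilizations, and to isolate the cancelling pairs $(\n_1,\g_2)$ and $(\n_2,\g_1)$ in the $(\n,\g)$-pairing are mutually compatible — most safely by performing all of them in $\Sigma$ before compressing along $\A_1$ and confirming that no later slide reintroduces an intersection removed by an earlier one, and that the representative of $c$ used to split off the $S^1\X S^2$ summand can be taken disjoint from the surviving $\n$- and $\g$-curves simultaneously. The individual ingredients — destabilization of a stabilized splitting, the non-separating reducing-curve splitting — are standard, so the real content lies in organizing these moves.
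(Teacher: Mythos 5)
Your proof is correct and follows essentially the same route as the paper: the first two pairings are handled as destabilizations along $(\A_1,\n_1)$ and $(\A_1,\g_1)$, and the third by localizing the five curves in a genus-two subsurface after handleslides, so that the doubly-stabilized $S^3$ summand of $(\Sigma;\n,\g)$ spanned by the cancelling pairs $(\n_1,\g_2)$ and $(\n_2,\g_1)$ is traded for a genus-one $S^1\X S^2$ summand in which $\n_2$ and $\g_2$ become parallel. The handleslide compatibility you flag as the main obstacle is exactly what the paper's Lemma~\ref{lem:fiveslide} (plus the extra slides over $\n_1$ and $\g_1$) is designed to arrange, and it goes through as you expect.
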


\begin{proof}
We need only check that cut systems pair to yield the required 3-manifolds.  Note that $(\Sigma_{\A_1};\A',\n')$ and $(\Sigma_{\A_1};\A',\g')$ are destabilizations of $(\Sigma;\A,\n)$ and $(\Sigma;\A,\g)$, and so the only pairing that needs to be checked is $(\Sigma_{\A_1};\n',\g')$.  By Lemma~\ref{lem:fiveslide}, we may assume that no curves in $\n$ meet $\g_1$ and no curves in $\g$ meet $\n_1$.  Similarly, any $\n$ curves (except $\n_1$) that intersect $\g_2$ can be slid over $\n_1$ to eliminate these intersection, and any $\g$ curves (except $\g_1)$ that intersect $\n_2$ can be slide over $\g_1$.  Thus, after handleslides, we may assume that $\n \cap \g_2 = \n_1 \cap \g_2$ and $\g \cap \n_2 = \g_1 \cap \n_2$.  The result of five-chain surgery then excises a genus-2 $S^3$ summand from $(\Sigma;\n,\g)$ and replaces it with a genus one summand corresponding to $S^1 \X S^2$, in which the resulting curves $\n_1$ and $\g_1$ become parallel.  In other words, $(\Sigma_{\A_1};\n',\g')$ determines a genus-$(g-1)$ splitting of $\#^{k_3+1} (S^1 \X S^2)$, as desired.
\end{proof}

In Figure~\ref{fig:fiveex}, we can verify that the result of five-chain surgery performed on the trisection diagram from Figure~\ref{fig:spunrp2} yields a $(2;1,1,2)$-trisection of $S^1 \X S^3$.

\begin{figure}[h!]
  \centering
    \includegraphics[width=.4\linewidth]{fig/spunrp2.eps} \qquad
  \includegraphics[width=.4\linewidth]{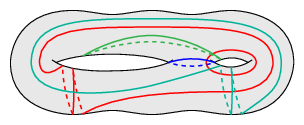}
  \caption{At left, a trisection diagram for $S_2$ containing a five-chain.  At right, the result of five-chain surgery, a $(2;1,1,2)$-trisection of $S^1 \X S^3$.}
	\label{fig:fiveex}
\end{figure}

By Lemma~\ref{lem:fivechain} above, we know that five-chain surgery on a trisection diagram for $X$ produces a new trisection diagram for some 4-manifold $X'$, and the following proposition uses decomposed curves to understand how $X$ and $X'$ are related.

\begin{proposition}\label{prop:surgery}
Suppose a trisection diagram for a 4-manifold $X$ contains a \emph{five-chain}.  Then the trisection diagram obtained by surgery on a five-chain corresponds to a 4-manifold $X'$ obtained by surgery on a 2-sphere in $X$.  Conversely, $X$ is obtained by surgery on a loop in $X'$.
\end{proposition}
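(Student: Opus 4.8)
The plan is to run the construction of Section~\ref{sec:fivechain} backwards: given the five-chain-surgered diagram $\D' = (\Sigma_{\A_1};\A',\n',\g')$, which by Lemma~\ref{lem:fivechain} is a trisection diagram for $X'$, I would locate a decomposed curve $\ell$ in $\D'$ whose surgery (in the sense of Theorem~\ref{thm:roman}) reconstructs the original diagram $(\Sigma;\A,\n,\g)$ up to a single stabilization, and then combine Theorem~\ref{thm:roman} with the duality between surgery on a loop and surgery on the dual $2$-sphere recorded in Section~\ref{sec:prelim}.

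Concretely, I would first put the five-chain in the normal form of Lemma~\ref{lem:fiveslide} and apply the additional handleslides from the proof of Lemma~\ref{lem:fivechain}, so that near the pair of pants $P$ the only curves of $\n$ and $\g$ meeting $P$ are $\n_2$ and $\g_2$ while $\A$ meets $\operatorname{int}(P)$ only in seams. In this position five-chain surgery simply caps off the $\A_1$-boundary of $P$, turning $P$ into an annulus in $\Sigma_{\A_1}$ whose two boundary circles become the parallel pair produced by Lemma~\ref{lem:fivechain}. This annular region, together with the seams and the parallel pair, is precisely the local picture produced by surgery on a decomposed curve (Figure~\ref{fig:deccurve}), so I would read off from it a decomposed curve $\ell$ --- an arc disjoint from $\A'$, an arc disjoint from $\n'$, and an arc disjoint from $\g'$, joined end to end along the parallel pair. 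The key step is then to check that surgery on $\ell$ re-pastes a pair of pants and returns a trisection diagram $\widehat{\D}$ handleslide-equivalent to a stabilization of $(\Sigma;\A,\n,\g)$; granting this, $X(\widehat{\D}) = X$, while Theorem~\ref{thm:roman} gives $X(\widehat{\D}) = (X')_\ell$, so $X$ is obtained by surgery on the loop $\ell$ in $X'$. The observation from Section~\ref{sec:prelim} that loop surgery and surgery on the dual $2$-sphere are inverse operations then converts this into the statement that $X'$ is obtained by surgery on a $2$-sphere in $X$.

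The hard part will be the diagrammatic bookkeeping in the middle step. Two points need care: a diagram containing a five-chain is not in general literally a surgery-on-a-loop diagram (as noted after Lemma~\ref{lem:fivechain}) --- there may be several parallel $\A$-seams in $P$, which must be absorbed by handleslides before the local picture matches Figure~\ref{fig:deccurve} --- and the genus accounting is off by one, since five-chain surgery lowers genus by one while surgery on a decomposed curve raises it by two, so one must carry a single stabilization through the argument and confirm it is a stabilization of $\D$ rather than of some unrelated trisection. An alternative, and likely cleaner, route is to mimic the proof of Theorem~\ref{thm:roman} using the $\star$-trisection formalism directly: show that the sub-configuration of $\D'$ associated to $\ell$ describes a copy of $S^1 \X D^3$, that its complement is a $\star$-trisection diagram for $X' \setminus \ell$, and that replacing the $S^1 \X D^3$ piece by $S^2 \X D^2$ yields $X$ via the Pasting Lemma of~\cite{AM}; this avoids reconstructing all of $(\Sigma;\A,\n,\g)$.
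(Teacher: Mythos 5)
Your proposal follows essentially the same route as the paper: the authors likewise construct a decomposed curve $\ell$ in the five-chain-surgered diagram (from the arcs left by the deleted curves $\n_1$ and $\g_1$ together with an arc in the scar of $\A_1$), apply Theorem~\ref{thm:roman} to see that surgery on $\ell$ gives a trisection of $(X')_\ell$, and then exhibit handleslides showing the resulting genus-$(g+1)$ diagram destabilizes exactly to the original $(\Sigma;\A,\n,\g)$, with the sphere-surgery direction following from the duality recorded in Section~\ref{sec:prelim}. Your genus bookkeeping (one stabilization to absorb the $-1+2$ discrepancy) is precisely how the paper resolves the middle step.
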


\begin{proof}
We prove that $X$ is obtained by surgery on a loop in $X'$; the reverse follows from the discussion in Section~\ref{sec:prelim}.  Let $(\Sigma;\alpha,\beta,\gamma)$ be a trisection diagram for $X$ containing a five-chain $\{\gamma_2,\beta_1, \alpha_1, \gamma_1, \beta_2\}$ and let $(\Sigma_{\A_1};\alpha',\beta',\gamma')$ be the result of five-chain surgery, a trisection diagram for some 4-manifold $X'$ by Lemma~\ref{lem:fivechain}.  Let $D_1$ and $D_1'$ denote the scars of $\A_1$ in $\Sigma_{\A_1}$, where $D_1$ is contained in the annulus cobounded by $\beta_2$ and $\gamma_2$.  In $\Sigma_{\A_1}$, the curves $\beta_2$ and $\gamma_2$ induce arcs $b$ and $c$, respectively, from $D_1$ to $D_1'$.  Let $a$ be an arc in $D_1$ connecting $\pd b$ to $\pd c$.  Shrinking $D_1'$ to a point yields a decomposed curve $\ell$, as shown the top row of Figure~\ref{fig:fiveloop}.

By Theorem~\ref{thm:roman}, the result $(\Sigma'';\A'',\n'',\g'')$ of doing surgery on $\ell$ in $(\Sigma_{\A_1}; \A',\n',\g')$ is a trisection diagram for $X''$, where $X''$ is obtained by surgery on $\ell$ in $X'$, letting $\A_0$, $\n_0'$, $\g_0'$ be defined as in the definition of surgery on a decomposed curve.  Sliding $\n_0'$ over $\n_2$ (left over from the original five-chain) and sliding $\g_0'$ over $\g_2$ yields a curve $\n_* = \g_*$ that intersects $\A_0$ once.  Any arcs of $\A''$ that meet $\n_*$ can be slid over $\A_0$, showing that the trisection $\T''$ determined by $(\Sigma'';\A'',\n'',\g'')$ admits a destabilization.  These slides are shown in the bottom row of Figure~\ref{fig:fiveloop}.  Finally, destabilizing yields a diagram identical to the $(\Sigma;\A,\n,\g)$ in our hypotheses, implying that $X = X''$, and completing the proof.  
\end{proof}

\begin{figure}[h!]
  \centering
    \includegraphics[width=.25\linewidth]{fig/fivesurgery1.eps} \qquad
  \includegraphics[width=.25\linewidth]{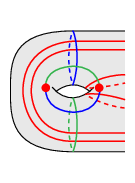} \qquad
   \includegraphics[width=.25\linewidth]{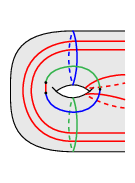}
     \put (-300,59) {\textcolor{blue}{$\n_1$}}
\put (-312,73) {\textcolor{red}{$\A_1$}}
\put (-300,105) {\textcolor{ForestGreen}{$\g_1$}}
\put (-312,128) {\textcolor{blue}{$\n_2$}}
\put (-312,32) {\textcolor{ForestGreen}{$\g_2$}}
\put (-202,80) {\textcolor{red}{$D_1$}}
\put (-159,75) {\textcolor{red}{$D'_1$}}
\put (-80,75) {\textcolor{red}{$a$}}
\put (-35,93) {\textcolor{ForestGreen}{$c$}}
\put (-35,51) {\textcolor{blue}{$b$}}\\
       \includegraphics[width=.25\linewidth]{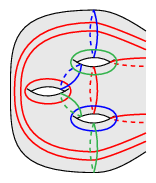} \qquad
    \includegraphics[width=.25\linewidth]{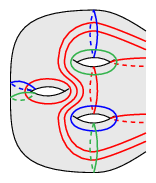}
    \put (-222,75) {\textcolor{red}{$\A_0$}}
\put (-205,43) {\textcolor{ForestGreen}{$\g_0'$}}
\put (-207,80) {\textcolor{blue}{$\n_0'$}}
\put (-115,62) {\textcolor{teal}{$\n_*$}}
  \caption{At top left, a diagram containing a five-chain.  At top center, the diagram induced by five-chain surgery, with scars from $\A_1$ and arcs from $\n_1$ and $\g_1$.  At top right, the decomposed curve $\ell$ resulting from five-chain surgery.  At bottom left, a diagram resulting from surgery from $\ell$.  At bottom right, the result of slides revealing a destabilization, which yields in the diagram at top left.}
	\label{fig:fiveloop}
\end{figure}

\begin{remark}
An alternative and more circuitous proof of Proposition~\ref{prop:surgery} can be obtained by converting the relevant trisection diagrams to Kirby diagrams as in~\cite{kepplinger} and replacing a 0-framed 2-handle with a dotted circle, corresponding to a 1-handle, the surgery operation in this setting.
\end{remark}

\section{Proof of the main theorem}\label{sec:proof}

In this section, we combine all of the ingredients we have assembled to give a proof of Theorem~\ref{thm:main}, the statement of which has two parts:  That every weakly reducible genus-three trisection is reducible or contains a five-chain, and that the corresponding 4-manifolds are $S_p$, $S_p'$, or a short list of standard simply-connected manifolds.

\begin{proof}[Proof of Theorem~\ref{thm:main}]
Suppose $X$ admits a weakly reducible genus-three trisection $\T$.  If $\T$ is reducible, then $X$ can be expressed as a connected sum $X' \# X''$, where $X'$ admits a genus-one trisection and $X''$ admits a genus-two trisection.  By the main theorem in~\cite{MZ2}, it follows that $X$ is $S^4$ or a connected sum of copies of $\pm \CP^2$, $S^1 \X S^3$, and $S^2 \X S^2$.

By Proposition~\ref{prop:dp}, either $\T$ is reducible, completing the proof, or without loss of generality, we have $\n_3 = \g_3$ and $|\A_1 \cap \n_1| = |\A_1 \cap \g_1| = 1$. By sliding $\beta_2$ and $\gamma_2$ over $\beta_1$ and $\gamma_1$, respectively, we can also suppose that $\n_2$ and $\g_2$ are disjoint from $\A_1$.  Let $Y$ be the 3-manifold determined by the genus-one Heegaard diagram $(\Sigma_{\A_1,\n_3};\n_2,\g_2)$, so that $Y$ is either $S^3$, $S^1 \X S^2$, or a lens space $L(p,q)$.  Additionally, suppose that $\T$ is a $(3;k_1,k_2,k_3)$-trisection, noting that the existence of $\n_3 = \g_3$ induces $k_3 = 1$.  Up to the symmetry of $\n$ and $\g$, this yields three cases to consider: $(k_1,k_2) = (1,1)$, $(0,1)$, or $(0,0)$.  Thus, considering both the possibilities for $Y$ and the possibilities for $(k_1,k_2)$, the proof involves examining nine cases.

Let $\A' = \{\A_2,\A_3\}$, $\n' = \{\n_2,\n_3\}$, and $\g' = \{\g_2,\g_3\}$, and consider the Heegaard triple $(\Sigma_{\A_1}; \A',\n',\g')$.  By construction, we have
\begin{enumerate}
\item $(\Sigma_{\A_1}; \A',\n')$ is a diagram for $\#^{k_2}(S^1 \X S^2)$,
\item $(\Sigma_{\A_1}; \A',\g')$ is a diagram for $\#^{k_1}(S^1 \X S^2)$, and
\item $(\Sigma_{\A_1}; \n',\g')$ is a diagram for $Y \# (S^1 \X S^2)$.
\end{enumerate}

First, suppose that $Y = S^3$.  Then $(\Sigma_{\A_1}; \A',\n',\g')$ is a $(2;k_1,k_2,1)$-trisection diagram for a genus-2 trisection $\T'$, which is standard by the main theorem in~\cite{MZ2}.  If $(k_1,k_2) = (1,1)$, then $\T'$ is a $(2;1)$-trisection, which can be expressed as the connected sum of a $(1,0)$-trisection of $\pm \CP^2$ and a $(1,1)$-trisection of $S^1 \X S^3$.  In particular, there is a non-separating curve $c$ in $\Sigma_{\A'}$ that bounds disks in each of $H_{\A'}$, $H_{\n'}$, and $H_{\g'}$.  Here we take care, because although we know immediately that $c$, considered as a curve in $\Sigma$, bounds a disk in $H_{\A}$  since $\Sigma_{\A_1}$ is obtained by compressing along a curve in $\A$, it does not immediately follow that $c$ also bounds disks in $H_{\n}$ or $H_{\g}$.

However, note that both $c$ and $\n_3 = \g_3$ are non-separating reducing curves for the Heegaard splitting of $S^1 \X S^2$ determined by $(\Sigma_{\A_1}; \n',\g')$.  By Lemma~\ref{lem:unique1}, it follows that $c$ is homotopic to $\n_3$ in $\Sigma_{\A_1}$, which in turn implies that $c$ becomes homotopic to $\n_3$ in $\Sigma$ after a sequence of handleslides of $c$ over $\A_1$.  Since both $c$ and $\A_1$ bound disks in $H_{\A}$, it follows that $\n_3$ also bounds a disk in $H_{\A}$, and thus $\n_3$ bounds a disk in all three handlebodies.  We conclude that $\T$ is reducible, as desired.

If $(k_1,k_2) = (0,1)$ or $(0,0)$, then the trisection $\T'$ is stabilized, and there are curves $c$ and $c'$ in $\Sigma_{\A_1}$ such that $c$ bounds a disk in $H_{\A'}$, $c'$ bounds disks in both of $H_{\n'}$ and $H_{\g'}$, and $|c \cap c'| = 1$.  By Lemma~\ref{lem:unique1}, we have that $c'$ and $\n_3$ are homotopic in $\Sigma_{\A'}$, and so $c$ is homotopic in $\Sigma_{\A'}$ to a curve $c^*$ that meets $\n_3$ in a single point.  Finally, $c$ becomes homotopic to $c^*$ in $\Sigma$ after some number of handleslides of $c$ over $\A_1$, and since both $c$ and $\A_1$ bound disks in $H_{\A}$, we have that $c^*$ bounds a disk in $H_{\A}$ as well.  Since $|c^* \cap \n_3| = 1$, we conclude that our original trisection $\T$ is stabilized and thus $\T$ is reducible.

Next, we move on to the case in which $Y = L(p,q)$.  Consider the Heegaard triple $(\Sigma_{\A_1}; \A',\n',\g')$.  In this case, Lemma~\ref{lem:tripexist} implies that it is not possible to have $(k_1,k_2) = (0,1)$.  If $(k_1,k_2) = (1,1)$, applying Proposition~\ref{prop:lens} to the Heegaard triple $(\Sigma_{\A_1}; \A',\n',\g')$ yields that there exists a non-separating curve $c \subset \Sigma_{\A_1}$ bounding disks in each of $H_{\A'}$, $H_{\n'}$, and $H_{\g'}$.  As above, $c$ also bounds a disk in $H_{\A}$ when we consider $c$ as a curve in $\Sigma$, but a priori we cannot assume $c$ bounds a disk in $H_{\n}$ or $H_{\g}$.  Nevertheless, Lemma~\ref{lem:unique2} asserts that the non-separating reducing curve for $(\Sigma_{\A_1}; \n',\g')$ is unique, and thus $c$ is homotopic to $\n_3$ in $\Sigma_{\A_1}$.  As above, this implies that $c$ becomes homotopic to $\n_3$ in $\Sigma$ after a sequence of handleslides of $c$ over $\A_1$, and thus $\n_3$ bounds a disk in all three handlebodies, and $\T$ is reducible.

Similarly, if $(k_1,k_2) = (0,0)$, our Heegaard triple $(\Sigma_{\A_1}; \A',\n',\g')$ satisfies the hypotheses of Theorem~\ref{thm:triple}, which implies that there are curves $c_1$ bounding a disk in $H_{\A'}$ and $c_2$ bounding disks in both $H_{\n'}$ and $H_{\g'}$ such that $|c_1 \cap c_2| = 1$.  By Lemma~\ref{lem:unique2}, we have that $c_2$ and $\n_3$ are homotopic in $\Sigma_{\A'}$, and the proof proceeds as above to conclude that $\T$ is stabilized.

Finally, we consider the case in which $Y = S^1 \X S^2$.  Similar the the previous case,  Lemma~\ref{lem:tripexist} applied to $(\Sigma_{\A_1}; \A',\n',\g')$ yields that $(k_1,k_2)$ cannot equal $(0,1)$.  The only remaining sub-cases are $(k_1,k_2) = (1,1)$ and $(k_1,k_2) = (0,0)$.  We will show that in these two cases, we can find a five-chain $\{\g_2^*,\n_1^*, \A_1,\g_1^*, \n_2\}$ using the data from the original trisection.  Let $\n^* = \{\n_1,\n_2\}$ and $\g^* = \{\g_1,\g_2\}$, and consider the genus-two Heegaard splitting for $S^3$ given by $(\Sigma_{\n_3};\n^*,\g^*)$.  Since $\A_1 \cap \n_3 = \emp$, we can view $\A_1$ as a framed knot in this copy of $S^3$, with framing determined by $\Sigma_{\n_3}$.  Thus, we can perform surface-framed Dehn surgery along $\A_1$, which can be accomplished by attaching a 3-dimensional 2-handle to $H_{\n^*}$ along $\A_1$, attaching a 3-dimensional 2-handle to $H_{\g^*}$ along $\A_1$, and gluing the results together.  Since $|\A_1 \cap \n_1| = |\A_1 \cap \g_1| = 1$, the resulting pieces are two solid tori determined by $\n_2$ and $\g_2$ with boundary $\Sigma_{\{\A_1,\n_3\}}$.  In other words, the 3-manifold $Y = S^1 \X S^2$ is obtained from $S^3$ by surgery on $\A_1$.  By Gabai's Property R~\cite{Gabai2}, we can conclude that $\A_1$ is zero-framed unknot in this copy of $S^3$.

Now, push $\A_1$ into the interior of $H_{\n^*}$ and let $C = H_{\n^*} \setminus \A_1$, so that $C$ is a compression-body and $C \cup_{\Sigma_{\n_3}} H_{\g^*}$ is a genus-two Heegaard splitting of the exterior of $\A_1$ in $S^3$, a solid torus, which must be stabilized (see, for instance,~\cite{lei}).  Since $\n_2$ was chosen to be disjoint from $\A_1$, and $C$ has a unique non-separating compressing disk by Lemma~\ref{lem:comp1} (namely, the one bounded by $\n_2$), it follows that $\n_2$ is primitive in $H_{\g^*}$.

Next, define $H_{\A_1,\n_2}$ to be the handlebody determined by $\{\A_1,\n_2\}$, noting that $H_{\A_1,\n_2} \cup_{\Sigma_{\n_3}} H_{\g^*}$ is a genus-two Heegaard splitting for the manifold obtained by surgery on $\A_1$; that is, $S^1 \X S^2$.  As established above, both $\A_1$ and $\n_2$ are primitive in $H_{\g^*}$, and thus by Lemma~\ref{lem:primitive}, there is a curve $\gamma_1^*$ bounding a disk in $H_{\g^*}$ that intersects each of $\A_1$ and $\n_2$ in a single point.  We have now found three out of the five curves that will form our five-chain.

A priori, it may be possible that $\n_1 \cap \gamma_1^* \neq \emp$.  However, $|\gamma_1^* \cap \n_2| = 1$, and so there is a series of slides of $\n_1$ over $\n_2$ along sub-arcs of $\gamma_1^*$ that produce a new curve $\n_1^*$ such that $\gamma_1^* \cap \n_1^* = \emp$.  In addition, $|\gamma_1^* \cap \A_1| = 1$, and so these slides can be chosen to take place along sub-arcs of $\gamma_1$ that do not intersect $\A_1$; hence, the result $\n_1^*$ of all of these handleslides satisfies $|\n_1^* \cap \A_1| = |\n_1 \cap \A_1| = 1$.  See Figure~\ref{fig:slides}. The curves $\gamma^*_1$ and $\beta^*_1$ lie in $\Sigma_{\beta_3}$ so they can be though of as curves in $\Sigma$ disjoint from $\beta_3$. Hence $\{\beta^*_1, \alpha_1, \gamma^*_1, \beta_2\}$ are four-fifths of our desired five-chain.

\begin{figure}[h!]
  \centering
  \includegraphics[width=.3\linewidth]{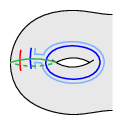}
    \put (-55,52.5) {\textcolor{blue}{$\n_2$}}
\put (-132,66) {\textcolor{ForestGreen}{$\g^*_1$}}
\put (-113,49) {\textcolor{red}{$\A_1$}}
\put (-103,85) {\textcolor{blue}{$\n_1$}}
\put (-63,95) {\textcolor{CornflowerBlue}{$\n^*_1$}}
  \caption{The curve $\n_1$ can be converted to a light blue curve $\n_1^*$ disjoint from $\g_1^*$ by sliding over $\n_2$.}
	\label{fig:slides}
\end{figure}

Having established four of five curves in our five-chain, let $\g_2^*$ be a curve in $\Sigma$ such that $\{\g_1^*,\g_2^*,\g_3\}$ is a cut system for $H_{\g}$, $\g_2^* \cap \A_1 = \emp$, and $\g_2^*$ intersects $\n_1^* \cup \n_2$ minimally among all such choices.  Let $\Sigma_* = \Sigma_{\n_3} \setminus (\A_1 \cup \g^*_1)$, so that $\Sigma_*$ is a torus with one boundary component, with $H_1(\Sigma_*) = \Z \oplus \Z$.  Note that $\Sigma_*$ contains two scars from compressing along $\n_3$, and $\pd \Sigma_*$ is a curve made up of two arcs coming from $\A_1$ and two arcs coming from $\g^*_1$.  Since $|\n^*_1 \cap \A_1| = 1$ and $\n_1^* \cap \g_1^* = \emp$, we have that $\n_1^* \cap \Sigma_*$ is a single essential arc.  Similarly, $|\n_2 \cap \g^*_1| = 1$ and $\n_2 \cap \A_1 = \emp$, so $\n_2 \cap \Sigma_*$ is another essential arc.  We parameterize $H_1(\Sigma_*)$ so that a $(0,1)$-curve is disjoint from $\n^*_1 \cap \Sigma_*$ and a $(1,0)$-curve is disjoint from $\n_2 \cap \Sigma_*$.  See Figure~\ref{fig:sigmastar}.

\begin{figure}[h!]
  \centering
  \includegraphics[width=.3\linewidth]{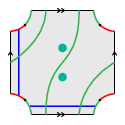} \qquad
    \includegraphics[width=.3\linewidth]{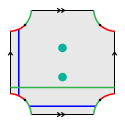} \qquad
            \put (-225,24) {\textcolor{blue}{$\n_2$}}
\put (-257,125) {\textcolor{ForestGreen}{$\g^*_1$}}
\put (-203,46) {\textcolor{ForestGreen}{$\g^*_2$}}
\put (-290,95) {\textcolor{red}{$\A_1$}}
\put (-262,58) {\textcolor{blue}{$\n^*_1$}}
        \put (-70,24) {\textcolor{blue}{$\n_2$}}
\put (-102,125) {\textcolor{ForestGreen}{$\g^*_1$}}
\put (-50,46) {\textcolor{ForestGreen}{$\g^*_2$}}
\put (-135,95) {\textcolor{red}{$\A_1$}}
\put (-107,68) {\textcolor{blue}{$\n^*_1$}}
  \caption{At left, a priori, $\g_2^*$ is a $(p,q)$-curve in $\Sigma_*$.  At right, we conclude that $\g_2^*$ must be a $(1,0)$-curve.}
	\label{fig:sigmastar}
\end{figure}

By construction, $\g^*_2$ is a curve in $\Sigma_*$, and so it is a $(p,q)$-curve for some integers $p$ and $q$.  We may assume that $|\g^*_2 \cap \n^*_1| = |p|$ and all intersection points have the same sign; if not, $\g^*_2$ and $\n^*_1$ bound a bigon in $\Sigma_*$, and after sliding $\g^*_2$ over $\g_3$ if necessary, we can decrease $|\g^*_2 \cap \n^*_1|$, contradicting our assumption of minimality.  By a similar argument, $|\g^*_1 \cap \n_2| = |q|$ and all intersection points have the same sign. Thus, the intersection matrix $M = M(\{\n^*_1,\n_2\},\{\g^*_1,\g^*_2\})$ is equal to 
\[ M= \begin{bmatrix}

0 & \pm p \\
1 & \pm q
\end{bmatrix}.\]
Observe that, $\{\n_1^*,\n_2\}$ is a cut system for $H_{\n^*}$ and $\{\g^*_1,\g^*_2\}$ is a cut system for $H_{\g^*}$, where $H_{\n^*} \cup_{\Sigma_{\n_3}} H_{\g^*} = S^3$, so that $|\det(M)| = |p| = 1$.  Finally, by the same reasoning as above, $(\Sigma_{\{\A_1,\n_3\}};\n_2,\g_2^*)$ is a Heegaard diagram for the 3-manifold obtained by surgery on $\A_1$ in $H_{\n^*} \cup_{\Sigma_{\n_3}} H_{\g^*}$, which we have shown is $S^1 \X S^2$.  It follows that $\n_2$ and $\g^*_2$ must intersect algebraically zero times, implying that $q = 0$, and so $\g^*_2$ is a $(\pm 1,0)$-curve in $\Sigma_*$.  We conclude that $|\g^*_2 \cap \n_1^*| = 1$ and $\g^*_2 \cap \n_2 = \emp$, as shown at right in Figure~\ref{fig:sigmastar}.  If necessary, slide $\g_2^*$ over $\n_3$ so that $\Sigma \setminus (\A_1 \cup \n_2 \cup \g_2^*)$ has a pair of pants component.  We conclude that $(\g^*_2,\n^*_1,\A_1,\g^*_1,\n_2)$ is a five-chain.

To complete the proof, let $\T'$ be the trisection obtained by five-chain surgery on $\T$.  By Lemma~\ref{lem:fivechain}, the trisection $\T'$ is a $(2;k_1,k_2,2)$-trisection of a 4-manifold $X'$, where the main theorem from~\cite{MZ2} implies that $\T'$ is standard.  If $(k_1,k_2) = (1,1)$, then $X'$ is diffeomorphic to $S^1 \X S^3$, and by Proposition~\ref{prop:surgery}, $X$ is obtained by surgery on a loop in $S^1 \X S^3$, implying that $X$ is diffeomorphic to $S_p$ or $S_p'$.  On the other hand, if $(k_1,k_2) = (0,0)$, then $X'$ is diffeomorphic to $S^4$, and again by Proposition~\ref{prop:surgery}, we have that $X$ is obtained by surgery on a loop in $S^4$, so that $X$ is either $S^2 \X S^2$ or $\CP^2 \# \overline{\CP}^2$, completing the proof.
 \end{proof}
 
 \section{Dependent triples}\label{sec:depend}
 
We can extend the proof in the previous section to trisections containing dependent triples.  Recall that a trisection $\T$ admits a \emph{dependent triple} if there are pairwise disjoint non-separating curves $\A_1$, $\n_1$, and $\g_1$ bounding disks in $H_{\A}$, $H_{\n}$, and $H_{\g}$, respectively, such that $[\A_1]$, $[\n_1]$, and $[\g_1]$ are linearly dependent in $H_1(\Sigma)$.  When $g(\Sigma) = 3$, there are three different ways up to homeomorphism that a dependent triple can occur:
\begin{enumerate}
\item Two of the curves are homotopic.
\item Two of the curves are homologous but are not homotopic.  In this case, the two curves cut $\Sigma$ into two twice-punctured tori.
\item No two curves are homologous.  In this case, $\Sigma \setminus (\A_1 \cup \n_1 \cup \g_1)$ has two components, where one component is a thrice-punctured torus and the other component is a pair of pants.
\end{enumerate}
These possibilities are shown in Figure~\ref{fig:triple}.

\begin{figure}[h!]
  \centering
  \includegraphics[width=.3\linewidth]{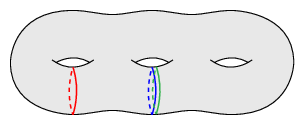} \quad
    \includegraphics[width=.3\linewidth]{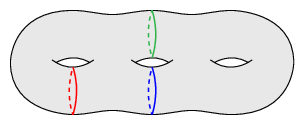} \quad
      \includegraphics[width=.3\linewidth]{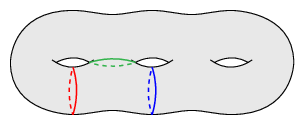}
  \caption{Up to symmetry and homeomorphism, the three ways a dependent triple can occur in $\Sigma$}
	\label{fig:triple}
\end{figure}

We need a lemma about trisection genus before we prove Theorem~\ref{thm:main2}.  Recall that $g(X) \geq \chi(X) - 2 + 3 \text{rk}(\pi_1(X))$\cite{CT}.

\begin{lemma}\label{lem:sum}
Suppose $X_1$ and $X_2$ are two closed 4-manifolds such that $g(X_i) = \chi(X_i) - 2 + 3 \text{rk}(\pi_1(X_i))$.  Then $g(X_1 \# X_2) = g(X_1) + g(X_2)$.
\end{lemma}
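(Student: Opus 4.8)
The plan is to sandwich $g(X_1 \# X_2)$ between the obvious upper bound coming from connected sums of trisections and the lower bound coming from the Chu--Tillmann inequality, and then to observe that the hypothesis forces these two bounds to coincide.

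First I would recall that trisection genus is subadditive: given trisections of $X_1$ and $X_2$ of genus $g(X_1)$ and $g(X_2)$, one forms their connected sum by excising a standardly embedded $4$-ball from each and gluing, producing a trisection of $X_1 \# X_2$ of genus $g(X_1) + g(X_2)$ (this is standard; see~\cite{GK}). Hence $g(X_1 \# X_2) \leq g(X_1) + g(X_2)$.

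For the reverse inequality I would compute the quantities appearing in the Chu--Tillmann bound $g(X) \geq \chi(X) - 2 + 3\,\text{rk}(\pi_1(X))$ for the connected sum. Euler characteristics of closed $4$-manifolds satisfy $\chi(X_1 \# X_2) = \chi(X_1) + \chi(X_2) - 2$, and by the Seifert--van Kampen theorem $\pi_1(X_1 \# X_2) \cong \pi_1(X_1) * \pi_1(X_2)$. The essential input is Grushko's theorem, which asserts that the minimal number of generators of a free product is the sum of the minimal numbers of generators of the factors, i.e. $\text{rk}(G_1 * G_2) = \text{rk}(G_1) + \text{rk}(G_2)$. Substituting these into the Chu--Tillmann inequality gives
\[
g(X_1 \# X_2) \geq \bigl(\chi(X_1) + \chi(X_2) - 2\bigr) - 2 + 3\bigl(\text{rk}(\pi_1(X_1)) + \text{rk}(\pi_1(X_2))\bigr),
\]
and regrouping the right-hand side as $\bigl(\chi(X_1) - 2 + 3\,\text{rk}(\pi_1(X_1))\bigr) + \bigl(\chi(X_2) - 2 + 3\,\text{rk}(\pi_1(X_2))\bigr)$, the hypothesis that each $X_i$ realizes equality in the Chu--Tillmann bound identifies this sum with $g(X_1) + g(X_2)$. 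Combined with the upper bound, this yields $g(X_1 \# X_2) = g(X_1) + g(X_2)$.

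The only non-formal ingredient is Grushko's theorem on the rank of a free product; the remainder is bookkeeping with $\chi$ and the two displayed inequalities, so I expect the proof to be very short. The main (and minor) obstacle is simply invoking Grushko cleanly and recording the connected-sum formula for the Euler characteristic of $4$-manifolds.
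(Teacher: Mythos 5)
Your proof is correct and is essentially identical to the paper's: the same sandwich between subadditivity of trisection genus and the Chu--Tillmann lower bound, using $\chi(X_1 \# X_2) = \chi(X_1) + \chi(X_2) - 2$ and additivity of rank under free product. The only difference is cosmetic: you explicitly name Grushko's theorem for $\text{rk}(G_1 * G_2) = \text{rk}(G_1) + \text{rk}(G_2)$, which the paper uses implicitly.
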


\begin{proof}
Certainly $g(X_1 \# X_2) \leq g(X_1) + g(X_1)$.  Noting that $\chi(X_1 \# X_2) = \chi(X_1) + \chi(X_2) - 2$, we have
\begin{eqnarray*}
g(X_1 \# X_2) &\geq& \chi(X_1 \# X_2) - 2 + 3 \text{rk}(\pi_1(X_1 \# X_2)) \\
&=& (\chi(X_1) + \chi(X_2) - 2) - 2 + 3(\text{rk}(\pi_1(X_1) + \pi_1(X_2))) \\
&=& \chi(X_1) - 2 + 3 \text{rk}(\pi_1(X_1)) + \chi(X_2) - 2 + 3 \text{rk}(\pi_1(X_2)) \\
&=& g(X_1) + g(X_2).
\end{eqnarray*}
\end{proof}

Notably, the manifolds $\pm \CP^2$, $S^2 \X S^2$, $S^1 \X S^3$, $S_p$, and $S_p'$ satisfy the hypotheses of Lemma~\ref{lem:sum}.

\begin{proof}[Proof of Theorem~\ref{thm:main2}]
Suppose that $X$ admits a genus-three trisection $\T$ containing a dependent triple $(\A_1,\n_1,\g_1)$, which falls into one of the three cases noted above.  In case (1), $\T$ is weakly reducible, and the conclusion follows from Theorem~\ref{thm:main}.  In case (2), we suppose without loss of generality that $\n_1$ and $\g_1$ cut $\Sigma$ into two twice-punctured tori.  In this case, the curves $\n_1$ and $\g_1$ are mutually separating, and so by applying Lemma~\ref{lem:weak2} to the Heegaard splitting $H_{\n} \cup H_{\g}$, we can conclude that either $\n_1$ bounds a disk in $H_{\g}$ or $\g_1$ bounds a disk in $H_{\n}$.  Thus, the trisection $\T$ is again weakly reducible, and we apply Theorem~\ref{thm:main}.

The third case requires more work.  Suppose that $\A_1$, $\n_1$, and $\g_1$ cobound a pair of pants $P$ in $\Sigma$.  If any of the curves bounds a disk in another handlebody, then the splitting is weakly reducible and we can apply Theorem~\ref{thm:main}.  Thus, suppose this does not occur.  Applying Lemma~\ref{lem:weak1} to the Heegaard splitting $H_{\A} \cup H_{\n}$, we can suppose without loss of generality that there is some $\n_2$ bounding a disk in $H_{\n}$ that meets $\A_1$ once and is disjoint from $\n_1$.  In this case, $\n_2 \cap P$ must be a single seam and some number of waves based in $\g_1$, which we can remove by handleslides of $\n_2$ over $\n_1$.  Thus, after slides we suppose that $\n_2 \cap P$ is a single seam and $|\n_2 \cap \g_1| = 1$.

Next, we apply Lemma~\ref{lem:weak1} to the Heegaard splitting $H_{\A} \cup H_{\g}$.  Either there is some $\g_2$ bounding a disk in $H_{\g}$ that meets $\A_1$ once and is disjoint from $\g_1$, or there is some $\A_2$ bounding a disk in $H_{\A}$ that meets $\g_1$ once and is disjoint from $\A_1$.  In the former case, we may suppose after slides that $\g_2 \cap P$ is a single seam and $|\g_2 \cap \n_1| = 1$.  In the latter case, we may suppose after slides that $\A_2 \cap P$ is a single seam and $|\A_2 \cap \n_1| = 1$.  These two possibilities are symmetric up to permutation of $\A$, $\n$, and $\g$, and so we suppose that there is some $\g_2$ bounding a disk in $H_{\g}$ that meets $\A_1$ once and is disjoint from $\g_1$, with $\g_2 \cap P$ a single seam and $|\g_2 \cap \n_1| = 1$.

We almost have a five-chain, but a priori it may be the case that $\n_2 \cap \g_2 \neq \emp$.  
However, since $|\beta_2\cap \gamma_1|=1$, we can find slides of $\gamma_2$ over $\gamma_1$ along arcs of $\beta_2$ that do not pass through $P$ to replace $\gamma_2$ with a curve $\gamma_2^*$ such that $\{\g_1,\n_2, \A_1,\g_2^*, \n_1\}$ is a five-chain.
See Figure~\ref{fig:moreslide}.

\begin{figure}[h!]
  \centering
  \includegraphics[width=.4\linewidth]{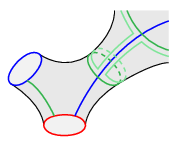}
        \put (-99,59) {\textcolor{blue}{$\n_2$}}
\put (-97,92) {\textcolor{ForestGreen}{$\g_1$}}
\put (-45,123) {\textcolor{ForestGreen}{$\g_2$}}
\put (-67,104) {\textcolor{green}{$\g^*_2$}}
\put (-115,21) {\textcolor{red}{$\A_1$}}
\put (-151,75) {\textcolor{blue}{$\n_1$}}
  \caption{Slides of $\g_2$ over $\g_1$ along arcs of $\n_2$ can be used to create a five-chain $\{\g_1,\n_2, \A_1,\g_2^*, \n_1\}$.}
	\label{fig:moreslide}
\end{figure}

It follows from Lemma~\ref{lem:fivechain} and Proposition~\ref{prop:surgery} that $X$ is obtained by surgery on a loop $\ell$ in a manifold $X'$ admitting a $(2;k_1,k_2,k_3)$ trisection, where $k_1,k_2 \leq 1$.  By the main theorem from~\cite{MZ2}, we have that $X'$ is $S^4$, $S^2 \X S^2$, or a connected sum of at most two copies of $\pm \CP^2$ and/or $S^1 \X S^3$, with at most one $S^1 \X S^3$ summand.  If $\ell$ is null-homotopic in $X'$, then $\ell$ is isotopic into a 4-ball, in which case $X$ is a connected sum of copies of $S^2 \X S^2$, $\pm \CP^2$, and/or $S^1 \X S^3$.  Otherwise, $[\ell]$ is non-trivial in $\pi_1(X')$, from which it follows that $X'$ is either $S^1 \X S^3$ or $S^1 \X S^3 \# \pm \CP^2$, and thus $X$ can be $S_p$, $S_p'$, $S_p \# \pm \CP^2$, or $S'_p \# \pm \CP^2$, where $p \geq 1$.  If $p>1$, then by Lemma~\ref{lem:sum}, we have that $g(S_p \# \pm \CP^2) = (S_p' \# \pm \CP^2) = 4$, contradicting our assumption that $X$ admits a genus-three trisection.  We conclude $X$ is diffeomorphic to $S_p$, $S_p'$, or a connected sum of copies of $\pm \CP^2$ and $S^2 \X S^2$, completing the proof.
\end{proof}

 \section{Five-chain creation, speculations, and further questions}\label{sec:question}
 
 In the final section, we discuss an operation that reverses five-chain surgery, which we call \emph{five-chain creation}, and we include some problems for further investigation.  Consider a trisection diagram $(\Sigma;\A,\n,\g)$ for a $(g;k_1,k_2,k_3)$-trisection $\T$ of $X$, in which $k_3 \geq 1$ and $\n_1 = \g_1$, and choose a decomposed curve $\ell \subset \Sigma$ such that $|\ell \cap \n_1| = 1$ and $\ell \cap \n =\ell \cap \g = \ell \cap \n_1$.  Pushing $\g_1$ off $\n_1$, let $A \subset \Sigma$ denote the annulus cobounded by $\n_1$ and $\g_1$, and break $\ell$ into three arcs $a \cup b \cup c$ so that $a \subset A$, $b$ meets $\g_1$ in a single point, and $c$ meets $\n_1$ in a single point, as in the top right of Figure~\ref{fig:fiveloop}.
 
Let $(\Sigma'';\A'',\n'',\g'')$ be the trisection diagram obtained by surgery on $\ell$.  As in the proof of Proposition~\ref{prop:surgery}, the trisection $\T''$ correspond to $(\Sigma'';\A'',\n'',\g'')$ is stabilized and can be destabilized to a $(g+1;k_1,k_2,k_3-1)$-trisection diagram $(\Sigma;\A',\n',\g')$, which contains a five-chain, as shown in Figure~\ref{fig:fiveloop}.  We say that $(\Sigma;\A',\n',\g')$ is related to $(\Sigma;\A,\n,\g)$ by \emph{five-chain creation}, so that five-chain creation is the inverse operation to five-chain surgery.

The proof of Theorem~\ref{thm:main} reveals that weakly reducible genus-three trisections can be obtained by five-chain creation on a $(2;0,0,2)$-trisection of $S^4$ or a $(2;1,1,2)$-trisection of $S^1 \X S^3$.  Although these trisections are standard by Theorem~\ref{thm:MZ}, the starting point for five-chain creation is a trisection \emph{diagram}, not the trisection itself.  Both of the trisections above have the properties that $k_3 = g$, and so $H_{\n} = H_{\g}$.  Thus, interesting diagrams arise by starting with a genus-two Heegaard diagram $(\Sigma;\A,\n)$ for $S^3$ or $S^1 \X S^2$ to induce a trisection diagram $(\Sigma;\A,\n,\n)$ for $S^4$ or $S^1 \X S^3$, respectively.  In this case, the Heegaard diagram $(\Sigma;\A,\n)$ and choice of $\n_1$ induces a loop $\ell$ by picking a curve in $\Sigma$ dual to $\n_1$ and disjoint from $\n_2$.  Pushing $\ell$ slightly into $H_{\n}$ and then drilling it out yields a compression-body $C$, and thus $H_{\A} \cup_{\Sigma} C$ is a genus-two Heegaard splitting of $\ell$, considered as a knot in $S^3$ or $S^1 \X S^2$.  In other words, $\ell$ is a \emph{tunnel number one} knot in $S^3$ or $S^1 \X S^2$, and a core of $C$ is an \emph{unknotting tunnel} for $\ell$.  For more details about tunnel number and unknotting tunnels, see~\cite{tunnel}, for instance.

Reversing the process, given a genus-two Heegaard surface $\Sigma$ for a tunnel number one knot $K$ in $S^3$ or $S^1 \X S^2$, the infinitely many different ways to isotope $K$ into $\Sigma$ are determined by a choice of integral framing of $K$.  Thus, we have proved

\begin{proposition}\label{prop:tunnel}
Suppose that $\tau$ is an unknotting tunnel for a knot $K$ in $S^3$ or $S^1 \X S^2$, and $\lambda$ is an integral framing of $K$.  Then $(K,\tau,\lambda)$ induces a weakly reducible $(3;0,0,1)$- or $(3;1)$-trisection, respectively.  Moreover, every irreducible, weakly reducible genus-3 trisection is obtained in this way.
\end{proposition}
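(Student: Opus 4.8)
The plan is to establish the two assertions separately: the existence statement by a direct construction, and the converse by inverting that construction with the structure theory behind Theorem~\ref{thm:main}.

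\emph{The construction.} Given $(K,\tau,\lambda)$ with $K\subset Y$ and $Y\in\{S^3,S^1\X S^2\}$, the hypothesis that $\tau$ is an unknotting tunnel means that $H_{\n}:=\eta(K\cup\tau)$ and $H_{\A}:=Y\setminus(K\cup\tau)$ are genus-two handlebodies sharing a boundary surface $\Sigma$, so $Y=H_{\A}\cup_{\Sigma}H_{\n}$ is a genus-two Heegaard splitting. Fix cut systems $\A$ for $H_{\A}$ and $\n=\{\n_1,\n_2\}$ for $H_{\n}$ in which $\n_1$ is the meridian of $H_{\n}$ dual to $K$ and $\n_2$ is the meridian of the tunnel handle. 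Since $(\Sigma;\A,\n)$ and $(\Sigma;\n,\A)$ are Heegaard diagrams for $Y$ while $(\Sigma;\n,\n)$ is a diagram for the double of $H_{\n}$, namely $\#^2(S^1\X S^2)$, the triple $(\Sigma;\A,\n,\n)$ is a trisection diagram: a $(2;0,0,2)$-trisection diagram of $S^4$ when $Y=S^3$, and a $(2;1,1,2)$-trisection diagram of $S^1\X S^3$ when $Y=S^1\X S^2$. Next I would isotope $K$ onto $\Sigma$ so that its surface framing equals $\lambda$: starting from the position in which $K$ is pushed off a core of $H_{\n}$, spinning $K$ once around its handle changes the surface framing by $\pm1$ while keeping $|K\cap\n_1|=1$ and $K\cap\n_2=\emp$, so every integral framing is realized. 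The resulting curve determines a decomposed curve $\ell=K$ in $(\Sigma;\A,\n,\n)$ meeting $\n=\g$ exactly once, along $\n_1$. Performing five-chain creation on this diagram and decomposed curve (surgery on $\ell$ via Theorem~\ref{thm:roman}, then destabilizing as in Proposition~\ref{prop:surgery}) yields a trisection diagram of type $(3;0,0,1)$ or $(3;1)$ containing a five-chain, which I declare to be the trisection induced by $(K,\tau,\lambda)$. It is weakly reducible: the curve $\n_2$ is disjoint from $\ell$ and therefore survives the construction as a non-separating curve bounding disks in both $H_{\n}$ and $H_{\g}$, disjoint from the non-separating compressing curve $\A_1$ of the five-chain. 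By Proposition~\ref{prop:surgery}, its underlying $4$-manifold is obtained by surgery on $\ell=K$ regarded as a loop in $S^4$ or $S^1\X S^3$.

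\emph{The converse.} Let $\T$ be an irreducible weakly reducible genus-three trisection of $X$. By the proof of Theorem~\ref{thm:main}, $\T$ avoids being reducible only by containing a five-chain, and the case analysis there leaves only the possibilities $(3;1)$ and $(3;0,0,1)$ for its type. I would then apply five-chain surgery (Lemma~\ref{lem:fivechain}), obtaining a genus-two trisection of a $4$-manifold $X'$ of type $(2;1,1,2)$ or $(2;0,0,2)$, which by Theorem~\ref{thm:MZ} is $S^1\X S^3$ or $S^4$; by Proposition~\ref{prop:surgery}, $X$ is recovered from $X'$ by surgery on a loop $\ell$. Because the Heegaard diagram obtained by pairing the last two cut systems of this genus-two trisection is a diagram for $\#^2(S^1\X S^2)$, after handleslides these two cut systems coincide, and the trisection is presented as $(\Sigma_0;\A_0,\n_0,\n_0)$ with $(\Sigma_0;\A_0,\n_0)$ a genus-two Heegaard diagram for $Y\in\{S^3,S^1\X S^2\}$. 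Tracing the construction in the proof of Proposition~\ref{prop:surgery}, $\ell$ may be isotoped onto $\Sigma_0$ so as to be dual to a curve of $\n_0$; pushing $\ell$ into $H_{\n_0}$ and drilling it out produces, by Lemma~\ref{lem:comp1}, a compression-body whose core $\tau$ is an unknotting tunnel, exhibiting $\ell=K$ as a tunnel number one knot in $Y$ with unknotting tunnel $\tau$ and with surface framing an integral framing $\lambda$. Since five-chain creation and five-chain surgery are mutually inverse operations on diagrams, the trisection induced by $(K,\tau,\lambda)$ is exactly $\T$.

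\emph{The main obstacle.} The delicate point is the last sentence of the converse: making precise the dictionary between a loop $\ell$ in $S^4$ or $S^1\X S^3$ carried by a compatible genus-two trisection structure and a tunnel number one knot in $S^3$ or $S^1\X S^2$ together with a chosen unknotting tunnel and integral framing, and verifying that the forward and reverse constructions genuinely invert one another at the level of diagrams, so that one recovers $\T$ itself rather than merely a trisection of the same $4$-manifold. This requires carrying the handleslides of Lemmas~\ref{lem:fiveslide} and~\ref{lem:fivechain} and of Proposition~\ref{prop:surgery} along in a way that stays compatible with the five-chain and with the curve $\n_2$; the subsidiary spinning argument that realizes every integral framing of $K$ on $\Sigma$ is routine but also needs to be included.
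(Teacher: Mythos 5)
Your proposal is correct and follows essentially the same route as the paper: the forward direction is five-chain creation applied to the doubled diagram $(\Sigma;\A,\n,\n)$ with the decomposed curve $\ell=K$ dual to $\n_1$ and framed by $\Sigma$, and the converse extracts the five-chain from the proof of Theorem~\ref{thm:main}, performs five-chain surgery to land on a $(2;0,0,2)$- or $(2;1,1,2)$-trisection with $H_{\n}=H_{\g}$, and reads off the tunnel number one knot. The ``main obstacle'' you flag (that five-chain creation and surgery invert one another at the level of diagrams, not just of $4$-manifolds) is handled in the paper only by appeal to Proposition~\ref{prop:surgery} and the discussion opening Section~\ref{sec:question}, so your treatment is no less complete than the paper's own.
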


This leads naturally to a generalization of Question 4.3 from~\cite{meier}. Compare with Question 7.1 of~\cite{AM}.

\begin{question}
Are the weakly reducible trisections induced by five-chain creation standard?  In other words, if $[K] = [K']$ in $\pi_1(S^4)$ or $\pi_1(S^1 \X S^3)$ and $\lambda \equiv \lambda' \mod 2$, do $(K,\tau,\lambda)$ and $(K',\tau',\lambda')$ induce diffeomorphic trisections?
\end{question}

A major open problem in trisection theory is whether there exists a non-standard trisection of $S^4$; that is, whether every trisection of $S^4$ is a(n unbalanced) stabilization of the genus-zero trisection.  The construction of $\T$ from $(K,\tau,\lambda)$ appears to be sensitive to the isotopy class of $K$ in $S^3$ or $S^1 \X S^2$, despite the fact that two choices $K$ and $K'$ become isotopic in $S^4$ or $S^1 \X S^3$, provided they determine the same element of $\pi_1(S^1 \X S^3)$.  By choosing a complicated tunnel number one knot $K$ in $S^1 \X S^3$ such that $[K]$ generates $\pi_1(S^1 \X S^3)$, we can induce a $(3;1)$-trisection of $S^4$ that is a strong candidate for being non-standard.  Unfortunately, current techniques for obstructing reducibility of non-minimal trisections are limited; see Section 5 of \cite{MZDehn} for further discussion.

\begin{question}
Do there exist genus-three trisections that are not weakly irreducible?
\end{question}

Following the language from 3-manifold theory, we call such trisections \emph{strongly irreducible}.  Some potentially strongly irreducible $(3;1)$-trisections can be obtained by lifting the 4-bridge trisection of the $m$-twist spun knot $\mathcal{S}_m(K)$, where $K$ is a 2-bridge knot in $S^3$, to the double cover $X$ of $S^4$ branched over $\mathcal{S}_m(K)$.  This bridge trisection is shown in Figure 22 of~\cite{MZB}.  When $k$ is odd, $X \cong S^4$, while if $k$ is even, $X \equiv S_p$ for some $p$~\cite{twist}, and thus these genus-three trisections still satisfy Conjecture~\ref{conj:meier}, despite the possibility that they may not be weakly reducible.  As above, this question begs for new techniques with which to obstruct trisections from having certain geometric properties.

\bibliographystyle{amsalpha}
\bibliography{genusthreebib}

\end{document}